\renewcommand{\dcases}
 {
  \MT_start_cases:nnnn
    {\quad}
    {$\m@th\displaystyle##$\hfil}
    {$\m@th\displaystyle##$\hfil}
    {\lbrace}
 }
\newtheorem{thm}{Theorem}[section]
\newtheorem{cor}[thm]{Corollary}
\newtheorem{lemma}[thm]{Lemma}
\newtheorem{prop}[thm]{Proposition}
\theoremstyle{definition}
\newtheorem{defn}[thm]{Definition}
\newtheorem{eg}[thm]{Example}
\theoremstyle{remark}
\newtheorem{rem}[thm]{Remark}
\newcommand{\N}{\mathbb{N}}
\newcommand{\R}{\mathbb{R}}
\newcommand{\C}{\mathbb{C}}
\newcommand{\prob}{\mathbb{P}}
\newcommand{\E}{\mathbb{E}}
\newcommand{\mcal}[1]{\mathcal{#1}}
\newcommand{\pto}{\overset{\prob}{\to}}
\newcommand{\dto}{\overset{d}{\to}}
\newcommand{\deq}{\overset{d}{=}}
\newcommand{\indc}[1]{\mathbbm{1}\{#1\}}
\newcommand{\source}{\operatorname{src}}
\newcommand{\target}{\operatorname{tar}}
\newcommand{\interior}[1]{\accentset{\circ}{#1}}
\newcommand{\floor}[1]{\lfloor #1 \rfloor}
\newcommand{\mbf}[1]{\mathbf{#1}}
\newcommand{\mfk}[1]{\mathfrak{#1}}
\newcommand{\op}[1]{\operatorname{#1}}
\newcommand{\matN}{\op{Mat}_N}
\renewcommand{\Re}{\operatorname{Re}}
\renewcommand{\Im}{\operatorname{Im}}
\DeclareMathOperator{\Tr}{Tr}
\DeclareMathOperator{\Cat}{Cat}
\DeclareRobustCommand{\SkipTocEntry}[5]{} 
\newcommand{\subalign}[1]{%
  \vcenter{%
    \Let@ \restore@math@cr \default@tag
    \baselineskip\fontdimen10 \scriptfont\tw@
    \advance\baselineskip\fontdimen12 \scriptfont\tw@
    \lineskip\thr@@\fontdimen8 \scriptfont\thr@@
    \lineskiplimit\lineskip
    \ialign{\hfil$\m@th\scriptstyle##$&$\m@th\scriptstyle{}##$\crcr
      #1\crcr
    }%
  }
}
\begin{document}

\author{Benson Au}
\address{University of California, San Diego\\
         Department of Mathematics\\
         9500 Gilman Drive \# 0112\\
         La Jolla, CA 92093-0112\\
         USA}
       \email{\href{mailto:bau@ucsd.edu}{bau@ucsd.edu}}
\date{\today}
\title[Finite-rank perturbations of random band matrices]{Finite-rank perturbations of random band matrices via infinitesimal free probability}

\subjclass[2010]{15B52; 46L53; 46L54; 60B20}
\keywords{BBP transition; finite-rank perturbation; infinitesimal free probability; random band matrix; traffic probability; Wigner matrix}

\begin{abstract}\label{abstract}
We prove a sharp $\sqrt{N}$ transition for the infinitesimal distribution of a periodically banded GUE matrix. For band widths $b_N = \Omega(\sqrt{N})$, we further prove that our model is infinitesimally free from the matrix units and the normalized all-ones matrix. Our results allow us to extend previous work of Shlyakhtenko on finite-rank perturbations of Wigner matrices in the infinitesimal framework. For finite-rank perturbations of our model, we find outliers at the classical positions from the deformed Wigner ensemble.
\end{abstract}

\maketitle
\tableofcontents

\section{Introduction}\label{sec:intro}

\subsection{Motivation}\label{sec:motivation}

The contact between random matrices and free probability first appeared in the seminal work of Voiculescu \cite{Voi91}. By now, a well-developed theory exists to illustrate the depth of this connection: see, for example, the monographs \cite{VDN92,NS06,AGZ10,MS17}. We summarize the basic paradigm as follows: in many generic situations, independent random matrices become freely independent in the large $N$ limit. The analytic machinery of free probability then allows us to understand various joint asymptotics associated to such multi-matrix models.

Despite the tremendous success of this approach, the standard free probability framework comes with inherent limitations. In particular, free independence only prescribes the zeroth order behavior of our random variables: for random matrices, this shortcoming already manifests itself at the level of outliers. To make this precise, we introduce some notation. In this article, we restrict our attention to self-adjoint matrices. For such a matrix $\mbf{A}_N \in \matN(\C)$, we write $(\lambda_k(\mbf{A}_N))_{k \in [N]}$ for its eigenvalues, counting multiplicity, arranged in a non-increasing order. We further write $\mu(\mbf{A}_N)$ for the \emph{empirical spectral distribution} (\emph{ESD}) of $\mbf{A}_N$. Thus,
\[
  \lambda_1(\mbf{A}_N) \geq \cdots \geq \lambda_N(\mbf{A}_N), \qquad \mu(\mbf{A}_N) = \frac{1}{N}\sum_{k \in [N]} \delta_{\lambda_k}.
\]
Hereafter, when we refer to a matrix $\mbf{A}_N$, we implicitly refer to a sequence of matrices $(\mbf{A}_N)_{N \in \N}$.

Now, suppose that we have random matrices $\mbf{A}_N$ and $\mbf{B}_N$ such that the ESDs converge weakly in expectation to some compactly supported probability measures $\mu_{\mbf{A}}$ and $\mu_{\mbf{B}}$ respectively.  If we further assume that $\mbf{A}_N$ and $\mbf{B}_N$ are asymptotically free, then we can even compute the limiting spectral distribution (LSD) of rational functions in the pair $(\mbf{A}_N, \mbf{B}_N)$ \cite{HMS18}. In particular, the freeness relationship completely determines the LSD of the sum $\mbf{C}_N = \mbf{A}_N + \mbf{B}_N$ from the marginals $\mu_{\mbf{A}}$ and $\mu_{\mbf{B}}$. By analogy with the classical case, this operation is known as the \emph{free (additive) convolution}, for which we use the notation $\mu_{\mbf{C}} = \mu_{\mbf{A}} \boxplus \mu_{\mbf{B}}$. We recall the following characterization of the free convolution in terms of subordination functions (see, for example, \cite[Chapter 3]{MS17}). For a probability measure $\mu$ on $\R$, we denote its \emph{Cauchy transform} by $G_\mu: \C^+ \to \C^-$, where
\[
G_\mu(z) = \int_\R \frac{1}{z - t}\, \mu(dt).
\]
We use the notation $F_\mu = \frac{1}{G_\mu}: \C^+ \to \C^+$ for the reciprocal Cauchy transform.

\begin{thm}[\hspace{-1sp}\cite{Voi93,Bia98}]\label{thm:subordination}
For any pair of probability measures $\mu_1, \mu_2$ on $\R$, there exists a unique pair of analytic functions $\omega_1, \omega_2: \C^+ \to \C^+$ such that
\begin{enumerate}[label=(\roman*)]
\item $G_{\mu_1}(\omega_1(z)) = G_{\mu_2}(\omega_2(z))$;
\item $\omega_1(z) + \omega_2(z) = z + F_{\mu_1}(\omega_1(z))$.
\end{enumerate}
Moreover, the common function in property (i) corresponds to the Cauchy transform of a unique probability measure on $\R$. We define the free convolution $\mu_1 \boxplus \mu_2$ as this unique probability measure, namely
\[
  G_{\mu_1 \boxplus \mu_2}(z) = G_{\mu_\ell}(\omega_\ell(z)), \qquad \forall \ell \in \{1, 2\}.
\]
\end{thm}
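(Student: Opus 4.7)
The plan is to reduce the two coupled conditions to a single fixed-point equation in the upper half plane, apply a Denjoy--Wolff / Earle--Hamilton style argument in the Poincar\'{e} metric to solve it, and then recognize the common value in (i) as the reciprocal Cauchy transform of a probability measure via the Nevanlinna criterion.

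\textbf{Reduction to a fixed point.} Set $h_i(w) := F_{\mu_i}(w) - w$ for $i \in \{1,2\}$. Since $F_{\mu_i}: \C^+ \to \C^+$ admits a Nevanlinna representation of the form $F_{\mu_i}(w) = w + a_i + \int_\R \frac{1+tw}{t-w}\,\sigma_i(dt)$, the auxiliary map $h_i$ sends $\C^+$ into $\overline{\C^+}$, with $\Im h_i(w) \geq 0$ for all $w \in \C^+$. Using $F_{\mu_1}(\omega_1) = \omega_1 + h_1(\omega_1)$, condition (ii) rewrites as $\omega_2 = z + h_1(\omega_1)$, and then (i), i.e.\ $F_{\mu_1}(\omega_1) = F_{\mu_2}(\omega_2)$, becomes $\omega_1 = z + h_2(\omega_2)$. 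Substituting yields the single fixed-point equation
\[
\omega_1 \;=\; T_z(\omega_1), \qquad T_z(w) := z + h_2\bigl(z + h_1(w)\bigr),
\]
and once $\omega_1$ is known, $\omega_2$ is determined by $\omega_2 = z + h_1(\omega_1)$.

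\textbf{Existence and uniqueness of the fixed point.} Because $\Im h_i \geq 0$ on $\C^+$ and $\Im z > 0$, the map $T_z$ sends $\C^+$ into itself, and in fact $\Im T_z(w) \geq \Im z > 0$. By the Schwarz--Pick lemma applied to $T_z$ as a holomorphic self-map of $\C^+$, $T_z$ is non-expanding in the Poincar\'{e} metric $d_{\C^+}$. To upgrade this to a strict contraction on a suitable invariant set, I would restrict attention to the half-plane $\{w : \Im w \geq \Im z\}$, which $T_z$ preserves, and observe that unless both $\mu_i$ are degenerate, at least one of $h_1, h_2$ has $\Im h_i(w) > 0$ strictly, which by the open mapping / boundary Schwarz--Pick inequality forces $T_z$ to be a strict contraction of this invariant subset into itself. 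Iterating $T_z$ from any starting point in the invariant set then produces a Cauchy sequence converging to the unique fixed point $\omega_1(z)$. Degenerate cases (one $\mu_i$ a point mass) are handled by inspection. Analyticity of $z \mapsto \omega_1(z)$ then follows from the implicit function theorem applied to $\omega_1 - T_z(\omega_1) = 0$, since $1 - \partial_w T_z(\omega_1) \neq 0$ by the strict contraction property.

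\textbf{Identification as a Cauchy transform.} Define $F(z) := F_{\mu_1}(\omega_1(z)) = F_{\mu_2}(\omega_2(z))$. I would verify that $F: \C^+ \to \C^+$ and that $F(z)/z \to 1$ non-tangentially as $z \to \infty$, using the known asymptotics $F_{\mu_i}(w)/w \to 1$ and the fact (provable from the fixed-point relations by an expansion at infinity) that $\omega_i(z)/z \to 1$ with $\omega_i(z) - z$ staying bounded non-tangentially. A direct calculation with (ii) shows $\Im F(z) \geq \Im z$, so by the Nevanlinna--Pick representation, $1/F$ is the Cauchy transform of a unique probability measure on $\R$, which we christen $\mu_1 \boxplus \mu_2$.

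\textbf{Main obstacle.} The delicate point is establishing strict contractivity of $T_z$ in the Poincar\'{e} metric in full generality, since $h_i$ need not strictly increase imaginary parts at every point (e.g.\ on the real axis, or asymptotically). My approach would be to exploit the invariant half-plane $\{\Im w \geq \Im z\}$ together with the boundary version of the Julia--Carath\'{e}odory lemma to obtain a contraction factor that depends on $\Im z$; the remaining bookkeeping---analyticity, asymptotics at infinity, and verifying the Nevanlinna criterion---is then mechanical.
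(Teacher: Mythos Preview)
The paper does not prove this statement: Theorem~\ref{thm:subordination} is quoted as a background result, attributed to \cite{Voi93,Bia98}, and used without proof. So there is no ``paper's own proof'' to compare against.

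That said, your outline is essentially the modern standard argument (as in Belinschi--Bercovici and the treatment in \cite{MS17}): rewrite the system as a single fixed-point equation for a holomorphic self-map of $\C^+$, invoke Denjoy--Wolff to get existence and uniqueness of the fixed point, and then check the Nevanlinna asymptotics $F(iy)/(iy) \to 1$ to recognize $1/F$ as a Cauchy transform. One correction: the clean way to get the fixed point is not to manufacture a strict contraction on a sub-half-plane via Julia--Carath\'{e}odory, but to apply the Denjoy--Wolff theorem directly. Your map $T_z$ satisfies $\Im T_z(w) \geq \Im z > 0$, so its range is contained in a horodisk bounded away from $\R$; this already rules out a boundary Denjoy--Wolff point, forcing an interior (and hence unique, attracting) fixed point without any separate contraction estimate. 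The degenerate cases and the analytic dependence on $z$ then follow as you indicate.
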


The tools of free harmonic analysis enable a great deal of practical computations. For example, if one takes $\mbf{A}_N$ to be a normalized matrix from the Gaussian unitary ensemble (GUE), then a classical result of Wigner shows that the LSD is the so-called semicircle distribution $\mu_{\mbf{A}} = \frac{1}{2\pi}\sqrt{4-t^2}\, dt$ \cite{Wig55}. At the same time, the unitary invariance of the GUE implies that $\mbf{A}_N$ is asymptotically free from a large class of random matrices \cite{Voi91}. In the setting above, one can take $\mbf{B}_N$ to be an independent diagonal matrix with i.i.d.\@ Rademacher entries, in which case $\mu_{\mbf{B}} = \frac{1}{2}\delta_{\pm 1}$. Implementing Theorem \ref{thm:subordination}, we obtain the LSD of the sum $\mbf{C}_N = \mbf{A}_N + \mbf{B}_N$:
\begin{gather*}
  \mu_{\mbf{C}} = \mu_{\mbf{A}} \boxplus \mu_{\mbf{B}} = \bigg(\frac{1}{2\pi}\sqrt{4-t^2}\, dt\bigg) \boxplus \bigg(\frac{1}{2}\delta_{\pm 1}\bigg) \\
  =\frac{1}{2\pi\sqrt{3}}\Bigg[\frac{\sqrt[3]{27t - 2t^3 + 3\sqrt{3}|t|\sqrt{27-4t^2}}}{\sqrt[3]{2}} - \frac{\sqrt[3]{2}t^2}{\sqrt[3]{27t - 2t^3 + 3\sqrt{3}|t|\sqrt{27-4t^2}}}\Bigg]\, dt.
\end{gather*}

Such additive perturbations appear naturally as models of interaction and noise. Under suitable conditions, we see that free probability allows us to understand the spectral distribution at the aggregate level; however, this approach fails to capture the behavior of the extremal eigenvalues. Indeed, consider the case of a rank one perturbation $\mbf{B}_N = \theta\mbf{E}_N^{(1, 1)}$, where $\mbf{E}_N^{(j, k)}$ is the matrix unit in the $(j, k)$-th coordinate and $\theta \in \R$. For $\mbf{A}_N$ GUE as before, the free convolution calculation $\mu_{\mbf{C}} = \mu_{\mbf{A}} \boxplus \mu_{\mbf{B}}$ reduces to the trivial identity
\[
\mu \boxplus \delta_0 = \mu, \qquad \forall \mu \in \mcal{P}(\R).
\]
From this perspective, the effect of the perturbation $\mbf{B}_N = \theta \mbf{E}_N^{(1, 1)}$ appears no different than the unperturbed model $\mbf{B}_N = 0$.

In actuality, we know that the behavior of the extremal eigenvalue exhibits a phase transition depending on the magnitude $|\theta|$ of the perturbation (a so-called BBP transition in view of the original work \cite{BBP05} on complex sample covariance matrices). In the case of the deformed GUE, P\'{e}ch\'{e} showed that the fluctuations of the extremal eigenvalue deviate from the Tracy-Widom distribution \cite{TW94} when $|\theta| \geq 1$ with the extremal eigenvalue even separating from the bulk when $|\theta| > 1$ \cite{Pec06}. The unitary invariance of the GUE implies that the same result holds for any rank one self-adjoint perturbation with nontrivial eigenvalue $\theta$. F\'{e}ral and P\'{e}ch\'{e} then extended the result to complex sub-Gaussian Wigner matrices under the perturbation $\mbf{B}_N' = \frac{\theta}{N}\mbf{J}_N$, where $\mbf{J}_N$ is the all-ones matrix \cite{FP07}. Notably, they proved the universality of the fluctuations of the extremal eigenvalue (cf. \cite{FK81,Sos99}). Ma\"{i}da established a large deviation principle for the extremal eigenvalue of the deformed Gaussian ensembles: as a corollary, this proves the same bulk separation phenomenon for the deformed Gaussian orthogonal ensemble (GOE) \cite{Mai07}. Capitaine, Donati-Martin, and F\'{e}ral generalized the bulk separation phenomenon to finite-rank perturbations: for example, $\mbf{B}_N$ of the form $\sum_{j = 1}^{N_0} \theta_j \mbf{E}_N^{(j, j)}$ for some fixed $N_0$. In this case, multiple eigenvalues exit the bulk, one for each value of $|\theta_j| > 1$. Their result holds for general Wigner matrices, real and complex, under the technical assumption that the entries satisfy a Poincar\'{e} inequality. At the same time, they extended the universality of the fluctuations of the extremal eigenvalue under perturbations of the form $\mbf{B}_N' = \frac{\theta}{N}\mbf{J}_N$ to real Wigner matrices. In contrast, they also proved the \emph{non-universality} of the fluctuations of the extremal eigenvalue under perturbations of the form $\mbf{B}_N = \theta \mbf{E}_N^{(1, 1)}$ \cite{CDMF09}. In a later work, the same authors also determined the joint fluctuations of the extremal eigenvalues \cite{CDMF12}. Pizzo, Renfrew, and Soshnikov \cite{PRS13} and later Renfrew and Soshnikov \cite{RS13} removed the technical assumptions for these results: the version we state below is due to them. For additional reading and related results, see the surveys \cite{Pec14,CDM17}.

\begin{thm}[BBP transition]\label{thm:bbp_transition}
For each $N \in \N$, let $(X_{j, k}^{(N)})_{j \leq k \in [N]}$ be a family of independent random variables, the off-diagonal entries $j < k$ possibly being complex-valued. We assume that the diagonal entries $j = k$ are centered with uniformly bounded variance satisfying the Lindeberg condition:
\begin{gather*}
  \E[X_{j, j}^{(N)}] = 0, \qquad \forall j \in [N]; \\
  \sup_{N \in \N} \sup_{j \in [N]} \E\big[|X_{j, j}^{(N)}|^2\big] < \infty; \\
  \lim_{N \to \infty} \frac{1}{N} \sum_{j \in [N]} \E\big[|X_{j, j}^{(N)}|^2\indc{|X_{j, j}^{(N)}| \geq \varepsilon \sqrt{N}}\big] = 0, \qquad \forall \varepsilon > 0.
\end{gather*}
For $(X_{j ,k}^{(N)})_{j < k \in [N]}$ real-valued, we assume that the off-diagonal entries $j < k$ are centered with identical variance and uniformly bounded fourth moments satisfying a Lindeberg type condition:
\begin{gather*}
  \E[X_{j, k}^{(N)}] = 0 \quad \text{and} \quad \E\big[|X_{j, k}^{(N)}|^2\big] = \sigma^2, \qquad \forall j < k \in [N]; \\
  \sup_{N \in \N} \sup_{j < k \in [N]} \E\big[|X_{j, k}^{(N)}|^4\big] < \infty; \\
  \lim_{N \to \infty} \frac{1}{N^2} \sum_{j < k \in [N]} \E\big[|X_{j, k}^{(N)}|^4\indc{|X_{j, k}^{(N)}| \geq \varepsilon N^{1/4}}\big] = 0, \qquad \forall \varepsilon > 0.  
\end{gather*}
For $(X_{j ,k}^{(N)})_{j < k \in [N]}$ complex-valued, we assume that the real and imaginary parts of each off-diagonal entry are independent with identical variance in addition to the conditions above. As a consequence,
\begin{gather*}
\E\big[|\Re(X_{j, k}^{(N)})|^2\big] = \E\big[|\Im(X_{j, k}^{(N)})|^2\big] = \frac{\sigma^2}{2}, \qquad \forall j < k \in [N].
\end{gather*}
Let $\mbf{X}_N(j, k) = X_{j, k}^{(N)}$ denote the corresponding (unnormalized) Wigner matrix with the usual normalization $\mbf{W}_N = \frac{1}{\sqrt{N}}\mbf{X}_N$. Assume that $\mbf{P}_N$ is a deterministic self-adjoint matrix of the same symmetry class as $\mbf{W}_N$ with fixed rank $r$ independent of the dimension. We further assume that the non-trivial eigenvalues of $\mbf{P}_N$ are independent of $N$, say $\theta_1 > \cdots > \theta_L$, where $\theta_\ell \neq 0$ occurs with multiplicity $m_\ell$ for $\ell \in [L]$. Let $L_{+\sigma} = \#(\ell \in [L]: \theta_\ell > \sigma)$ and $L_{-\sigma} = \#(\ell \in [L]: \theta_\ell < -\sigma)$, and define
\[
  \rho_\theta = \theta + \frac{\sigma^2}{\theta}.
\]
Then we have the following asymptotic behavior at the edge of the spectrum of the deformed Wigner ensemble $\mbf{W}_N + \mbf{P}_N$:
\begin{enumerate}[label=(\roman*)]
\item For any $\ell \in [L_{+\sigma}]$ and $i \in [m_\ell]$,
  \[
    \lambda_{m_1 + \cdots + m_{\ell-1} + i}(\mbf{W}_N + \mbf{P}_N) \pto \rho_{\theta_\ell};
  \]
\item $\lambda_{m_1 + \cdots + m_{L_{+\sigma}} + 1}(\mbf{W}_N + \mbf{P}_N) \pto 2\sigma$;
\item $\lambda_{N - m_L - \cdots - m_{L - L_{-\sigma} + 1}}(\mbf{W}_N + \mbf{P}_N) \pto -2\sigma$;
\item For any $\ell \in [L_{-\sigma}]$ and $i \in [m_{L - \ell + 1}]$,
  \[
    \lambda_{N - m_L - \cdots - m_{L - \ell + 1} + i}(\mbf{W}_N + \mbf{P}_N) \pto \rho_{\theta_{L-\ell+1}},
  \]
\end{enumerate}
where $\pto$ denotes convergence in probability.
\end{thm}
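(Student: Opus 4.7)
The plan is to reduce the eigenvalue problem for $\mbf{W}_N + \mbf{P}_N$ to a low-dimensional secular equation using the finite-rank structure of $\mbf{P}_N$. Diagonalize $\mbf{P}_N = V_N \Theta V_N^*$ where $V_N$ is $N \times r$ with orthonormal columns spanning the range of $\mbf{P}_N$ and $\Theta \in \op{Mat}_r(\C)$ is the diagonal matrix of non-trivial eigenvalues, repeated according to multiplicity. For $\lambda \notin \sigma(\mbf{W}_N)$, the Weinstein-Aronszajn identity gives
\[
\lambda \in \sigma(\mbf{W}_N + \mbf{P}_N) \iff \det\bigl(I_r - \Theta\, V_N^* (\lambda I - \mbf{W}_N)^{-1} V_N\bigr) = 0,
\]
so the outlier eigenvalues are precisely the zeros of this random $r \times r$ determinant.

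Next I would establish that $V_N^* (\lambda I - \mbf{W}_N)^{-1} V_N \pto g_{\mathrm{sc}}(\lambda)\, I_r$, uniformly on compact subsets of $\C \setminus [-2\sigma, 2\sigma]$, where $g_{\mathrm{sc}}$ is the Stieltjes transform of the semicircle distribution of radius $2\sigma$. In the GUE/GOE case this is immediate from invariance: one replaces $V_N$ by the first $r$ standard basis vectors and reduces the claim to convergence of the normalized trace on the principal $r \times r$ block. In general this is a vector-wise (anisotropic) resolvent estimate, which I would obtain by a resolvent expansion combined with truncation, centering, and moment bounds, following \cite{PRS13,RS13}. Granting this, the limiting secular equation is $\prod_\ell (1 - \theta_\ell g_{\mathrm{sc}}(\lambda))^{m_\ell} = 0$, whose roots outside $[-2\sigma, 2\sigma]$ are exactly $\lambda = \rho_{\theta_\ell}$ for $|\theta_\ell| > \sigma$, each with multiplicity $m_\ell$. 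A Hurwitz-type continuity argument then upgrades the convergence of the determinant to convergence in probability of the extreme eigenvalues in assertions (i) and (iv).

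To establish (ii) and (iii) — the absence of additional outliers — I would combine Weyl's interlacing
\[
\lambda_{k + r}(\mbf{W}_N) \leq \lambda_k(\mbf{W}_N + \mbf{P}_N) \leq \lambda_{k - r}(\mbf{W}_N)
\]
with the edge convergence $\lambda_1(\mbf{W}_N) \pto 2\sigma$ and $\lambda_N(\mbf{W}_N) \pto -2\sigma$, which holds under the stated Lindeberg-type hypotheses. The upper bound $2\sigma + o(1)$ follows immediately; the matching lower bound comes from the secular equation, since when $|\theta_{L_{+\sigma}+1}| \leq \sigma$ the function $g_{\mathrm{sc}}$ does not attain the value $1/\theta_{L_{+\sigma}+1}$ on $(2\sigma, \infty)$, so no further root of the limiting determinant separates from the bulk on that side (and symmetrically for the lower edge).

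The hard part is the anisotropic resolvent estimate. For perturbations such as $\mbf{B}_N = \theta \mbf{E}_N^{(1, 1)}$ — exactly the setting that makes the BBP transition delicate — the columns of $V_N$ are maximally localized, so one genuinely needs entrywise control of $(\lambda I - \mbf{W}_N)^{-1}$ rather than mere trace convergence. Proving this under only Lindeberg-type control on the diagonal entries and uniformly bounded fourth moments on the off-diagonal, with no identical-distribution or sub-Gaussian assumption on the diagonal, forces the careful truncation and comparison scheme that constitutes the technical core of \cite{PRS13,RS13} and that I would follow verbatim.
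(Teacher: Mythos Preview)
Your proposal is a reasonable sketch of the standard secular-equation approach, and indeed it is essentially the strategy of the cited references \cite{PRS13,RS13}. However, the paper does \emph{not} prove Theorem~\ref{thm:bbp_transition} at all: it is stated purely as background, with the attribution ``the version we state below is due to them'' referring to Pizzo, Renfrew, and Soshnikov. There is no proof in the paper to compare against; the theorem functions only as motivation for the infinitesimal-free-probability treatment that follows.

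So there is nothing wrong with your outline as a description of how one proves the BBP transition in the literature --- Weinstein--Aronszajn reduction, anisotropic resolvent convergence to $g_{\mathrm{sc}}(\lambda)I_r$, Hurwitz/Rouch\'{e} for root-tracking, and Weyl interlacing plus edge rigidity for the ``no spurious outliers'' part --- but for the purposes of this paper you should simply cite \cite{PRS13,RS13} and move on.
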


Recall that our earlier free convolution calculation failed to identify such outliers. Nevertheless, it turns out that the behavior of the outlying eigenvalues (as well as their eigenvectors) can be understood in terms of the subordination functions $\omega_\ell$ from Theorem \ref{thm:subordination} \cite{CDMFF11,Cap13,BBCF17} (see also \cite{BGN11} for related results). This suggests that free probability may yet prove useful to this end. Shlyakhtenko explained this connection using the framework of \emph{infinitesimal free probability}, an extension of free probability to the first order. In particular, by calculating a \emph{type B free convolution}, one obtains the $\frac{1}{N}$ correction to the LSD of such deformed ensembles. The outlying eigenvalues then appear in this correction in the form of Dirac masses \cite{Shl18}. We review this framework in the next section.

\subsection{Background}\label{sec:background}

We begin by recalling the usual free probability framework.
\begin{defn}[Free probability]\label{def:free_probability}
By a \emph{non-commutative (NC) probability space} $(\mcal{A}, \varphi)$, we mean a unital algebra $\mcal{A}$ over $\C$ paired with a unital linear functional $\varphi: \mcal{A} \to \C$. We say that $\varphi$ is \emph{tracial} if $\varphi(ab) = \varphi(ba)$ for all $a, b \in \mcal{A}$. The \emph{distribution} of a family of random variables $\mbf{a} = (a_i)_{i \in I} \subset \mcal{A}$ is the linear functional
\[
  \mu_{\mbf{a}}: \C\langle\mbf{x}\rangle \to \C, \qquad P \mapsto \varphi(P(\mbf{a})),
\]
where $\mbf{x} = (x_i)_{i \in I}$ is a set of non-commuting indeterminates and $P(\mbf{a}) \in \mcal{A}$ is the usual evaluation of NC polynomials. A sequence of families $(\mbf{a}_N)_{N \in \N}$, each living in a possibly different NC probability space $(\mcal{A}_N, \varphi_N)$, \emph{converges in distribution} if the sequence $(\mu_{\mbf{a}_N})_{N \in \N}$ converges pointwise. Note that the limit defines a new NC probability space $(\C\langle\mbf{x}\rangle, \lim_{N \to \infty} \mu_{\mbf{a}_N})$.

Unital subalgebras $(\mcal{A}_i)_{i \in I}$ of $\mcal{A}$ are said to be \emph{freely independent} (or simply \emph{free}) if for any $k \geq 2$ and consecutively distinct indices $i(1) \neq i(2) \neq \cdots \neq i(k)$,
\[
  \varphi(a_1 a_2 \cdots a_k) = 0, \qquad \forall a_j \in \interior{\mcal{A}}_{i(j)},
\]
where $\interior{\mcal{A}}_{i(j)} = \{a \in \mcal{A}_{i(j)}: \varphi(a) = 0\}$ denotes the subspace of centered elements. We say that collections of random variables $(\mcal{S}_i)_{i \in I}$ are free if the unital subalgebras that they generate are free. If a sequence of families $(\mbf{a}_N)_{N \in \N}$ converges in distribution, then we say that the random variables $\mbf{a}_N = (a_N^{(i)})_{i \in I}$ are \emph{asymptotically free} if the indeterminates $\mbf{x} = (x_i)_{i \in I}$ are free in $(\C\langle\mbf{x}\rangle, \lim_{N \to \infty} \mu_{\mbf{a}_N})$.
\end{defn}

\begin{rem}\label{rem:distribution}
The reader might wonder how the notion of a distribution above relates to the usual notion of a distribution for a real-valued random variable. If we assume both existence and uniqueness for the moment problem defined by $\mu_a$, then the two notions coincide. The moment sequences we consider in this paper will satisfy this assumption, so we speak of the two notions interchangeably. In particular, if $a, b \in (\mcal{A}, \varphi)$ are free with determinate moment problems, then $\mu_{a+b} = \mu_a \boxplus \mu_b$.
\end{rem}

\begin{eg}[Random matrices]\label{eg:random_matrices}
Let $\matN(L^{\infty-}(\Omega, \mcal{F}, \prob))$ denote the algebra of random $N \times N$ matrices whose entries, possibly complex-valued, have finite absolute moments of all orders. Then $(\matN(L^{\infty-}(\Omega, \mcal{F}, \prob)), \frac{1}{N}\E[{\Tr}(\cdot)])$ defines a tracial NC probability space.
\end{eg}

Voiculescu showed that independent unitarily invariant random matrices are asymptotically free \cite{Voi91}, the GUE being a prototypical example. Dykema later extended this result to general Wigner matrices \cite{Dyk93}. We now know freeness to be an ubiquitous phenomenon for invariant/mean-field multi-matrix models in the large $N$ limit \cite{MS17} (see also \cite{Spe17}).

Understanding the spectral behavior of non mean-field ensembles constitutes a major ongoing program of research, where random band matrices emerge as an attractive interpolative model (see \cite{Bou18} and the references therein). Here, the primary questions concern the local eigenvalue statistics and localization versus delocalization for the eigenvectors. In a different direction, we showed that freeness governs random band matrices for band widths $1 \ll b_N \ll N$ \cite{Au18}, motivating the investigations in this paper at the infinitesimal level. The results in \cite{Au18} rely on an extension of free probability introduced by Male called \emph{traffic probability} \cite{Mal11}: we make use of the traffic framework again, this time in conjunction with the infinitesimal framework. We refer the reader to \cite{Mal17,MP14,Gab15a,Gab15b,Gab15c,CDM16,ACDGM18} for additional reading on traffic probability and its applications.

Belinschi and Shlyakhtenko introduced infinitesimal free probability in \cite{BS12} to provide an analytic interpretation of the type $B$ free probability of Biane, Goodman, and Nica \cite{BGN03}. We content ourselves with the basic framework: for more on the interplay between these two notions, see \cite{FN10}. For recent work on infinitesimal free probability and its applications to random matrices, we mention the contributions \cite{Min18,DF19,Tse19}.

\begin{defn}[Infinitesimal free probability]\label{def:infinitesimal_free_probability}
By an \emph{infinitesimal NC probability space} $(\mcal{A}, \varphi, \varphi')$, we mean a NC probability space $(\mcal{A}, \varphi)$ with an additional linear functional $\varphi': \mcal{A} \to \C$ satisfying $\varphi'(1) = 0$. The \emph{infinitesimal distribution} of a family of random variables $\mbf{a} = (a_i)_{i \in I} \subset \mcal{A}$ is the linear functional
\[
  \nu_{\mbf{a}}: \C\langle\mbf{x}\rangle \to \C, \qquad P \mapsto \varphi'(P(\mbf{a})).
\]
We refer to the pair $(\mu_{\mbf{a}}, \nu_{\mbf{a}})$ as the \emph{type $B$ distribution} of $\mbf{a}$.

Unital subalgebras $(\mcal{A}_i)_{i \in I}$ of $\mcal{A}$ are said to be \emph{infinitesimally free} if
\begin{enumerate}[label=(\roman*)]
\item the $(\mcal{A}_i)_{i \in I}$ are free in $(\mcal{A}, \varphi)$;
\item for any $k \geq 2$ and consecutively distinct indices $i(1) \neq i(2) \neq \cdots \neq i(k)$, \label{def:Leibniz}
\[
  \varphi'(a_1 a_2 \cdots a_k) = \sum_{j = 1}^k \varphi(a_1 a_2 \cdots a_{j-1} \varphi'(a_j) a_{j+1} \cdots a_k), \qquad \forall a_j \in \interior{\mcal{A}}_{i(j)}.
\]
\end{enumerate}
Conditions (i) and (ii) are equivalent to the following asymptotic:
\[
  \varphi_t([a_1 - \varphi_t(a_1)] [a_2 - \varphi_t(a_2)] \cdots [a_k - \varphi_t(a_k)]) = O(t^2) \quad \text{as} \quad t \to 0,
\]
where $a_j \in \mcal{A}_{i(j)}$ and $\varphi_t = \varphi + t\varphi'$ for $t \in \R$. Thus, heuristically, we think of infinitesimal freeness as ``freeness to the first order''.
\end{defn}

\begin{rem}\label{rem:infinitesimal_distribution}
In view of Remark \ref{rem:distribution}, the reader might wonder how the notion of an infinitesimal distribution relates to the usual notion of a \emph{signed measure} on the real line. If we assume both existence and uniqueness for the signed moment problem defined by $\nu_a$, then the two notions coincide. The signed moment sequences we consider in this paper will typically satisfy this assumption, so we speak of the two notions interchangeably when possible. Note that the condition $\nu_a(1) = \varphi'(1) = 0$ implies that the corresponding signed measure has total mass zero.
\end{rem}

\begin{eg}[Random matrices, revisited]\label{eg:infinitesimal_random_matrices}
Let $\mcal{A}_N = (\mbf{A}_N^{(i)})_{i \in I}$ be a family of random matrices in $(\matN(L^{\infty-}(\Omega, \mcal{F}, \prob)), \frac{1}{N}\E[{\Tr}(\cdot)])$. Assume that $\mcal{A}_N$ converges in distribution with limit $\mu_{\mbf{x}} = \lim_{N \to \infty} \mu_{\mcal{A}_N}$. If we further assume that the limit
  \[
    \nu_{\mbf{x}} = \lim_{N \to \infty} N(\mu_{\mcal{A}_N} - \mu_{\mbf{x}})
  \]
exists, then $(\C\langle\mbf{x}\rangle, \mu_{\mbf{x}}, \nu_{\mbf{x}})$ defines a tracial infinitesimal NC probability space (both $\mu_{\mbf{x}}$ and $\nu_{\mbf{x}}$ vanish on the commutators). By a slight abuse of terminology, we often refer to $\nu_{\mbf{x}}$ (\emph{resp.,} $(\mu_{\mbf{x}}, \nu_{\mbf{x}})$) as the infinitesimal distribution (\emph{resp.,} type $B$ distribution) of $\mcal{A}_N$.
\end{eg}

In the single matrix case, say $\mbf{A}_N$, the infinitesimal distribution $\nu_{\mbf{A}}$ corresponds to the $\frac{1}{N}$ correction to the LSD $\mu_{\mbf{A}}$. Indeed, by definition,
\begin{align}
  \nu_{\mbf{A}}(x^\ell) &= \lim_{N \to \infty} N(\mu_{\mbf{A}_N}(x^\ell) - \mu_{\mbf{A}}(x^\ell)) \notag \\
                 &= \lim_{N \to \infty} \E[{\Tr}(\mbf{A}_N^\ell)] - N \Big(\lim_{M \to \infty} \mu_{\mbf{A}_M}(x^\ell)\Big) \notag \\
                 &= \lim_{N \to \infty} \E\Big[\sum_{k = 1}^N \lambda_k(\mbf{A}_N)^\ell\Big] - N \Big(\lim_{M \to \infty} \frac{1}{M}\E\Big[\sum_{j = 1}^M \lambda_j(\mbf{A}_M)^\ell\Big]\Big), \label{eq:unnormalized}
\end{align}
where we recall that $\mbf{A}_N$ is assumed to be self-adjoint. For example, in the case of $\mbf{A}_N \deq \op{GUE}(N, \frac{\sigma^2}{N})$, the infinitesimal distribution is null $\nu_{\mbf{A}} = 0$, a consequence of the genus expansion \cite{HZ86}. On the other hand, a result of Johansson \cite{Joh98} shows that the situation becomes much different for $\mbf{A}_N \deq \op{GOE}(N, \frac{\sigma^2}{N})$, where
\[
  \nu_{\mbf{A}} = \frac{1}{2}\bigg[\frac{1}{2}\delta_{\pm 2\sigma} - \frac{1}{\pi\sqrt{4\sigma^2-t^2}}\, dt \bigg].
\]
We mention that such corrections also exist for complex Wishart matrices \cite{MN04,Min18} and $\beta$-ensembles \cite{DE06}.

Note that the eigenvalues $(\lambda_k(\mbf{A}_N))_{k \in [N]}$ appear in \eqref{eq:unnormalized} via the \emph{unnormalized} trace. This suggests that the infinitesimal distribution is sensitive to outliers. To see this, we will need the following subordination result for the \emph{type $B$ free (additive) convolution}.

\begin{thm}[\hspace{-1sp}\cite{BS12}]\label{thm:type_B_subordination}
Suppose that $a, b \in (\mcal{A}, \varphi, \varphi')$ are infinitesimally free with compactly supported type $B$ distributions $(\mu_a, \nu_a), (\mu_b, \nu_b) \in \mcal{P}(\R) \times \mcal{M}_0(\R)$. By this, we mean that both coordinates of the type $B$ distribution have compact support. Then, in the notation of Theorem \ref{thm:subordination}, the sum $a + b$ also has a compactly supported type $B$ distribution $(\mu_{a + b}, \nu_{a + b}) \in \mcal{P}(\R) \times \mcal{M}_0(\R)$ characterized by 
\begin{enumerate}[label=(\roman*)]
\item $\mu_{a + b} = \mu_a \boxplus \mu_b$; 
\item $G_{\nu_{a+b}}(z) = G_{\nu_a}(\omega_a(z))\omega_a'(z) + G_{\nu_b}(\omega_b(z))\omega_b'(z)$,
\end{enumerate}
where $\omega_a'(z), \omega_b'(z)$ denote the usual derivatives. We define the type $B$ convolution $(\mu_a, \nu_a) \boxplus_B (\mu_b, \nu_b)$ as this unique type $B$ distribution, namely
\[
  (\mu_a, \nu_a) \boxplus_B (\mu_b, \nu_b) = (\mu_{a + b}, \nu_{a + b}).
\]
\end{thm}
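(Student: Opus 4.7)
The natural approach is to realize infinitesimal freeness as a first-order formal deformation of ordinary freeness, apply Theorem~\ref{thm:subordination} at the perturbed level, and then differentiate in the deformation parameter at $t = 0$. I introduce the family of linear functionals $\varphi_t = \varphi + t\varphi'$ for $t \in \R$ and let $\mu_\ell^{(t)}$ denote the formal distribution of $\ell \in \{a, b, a+b\}$ with respect to $\varphi_t$. By the asymptotic characterization in Definition~\ref{def:infinitesimal_free_probability}, the elements $a$ and $b$ are free relative to $\varphi_t$ modulo $O(t^2)$, which is precisely the amount of freeness needed to reason to first order in $t$. The compact support hypotheses on $(\mu_\ell, \nu_\ell)$ ensure that the Cauchy transforms expand as $G_{\mu_\ell^{(t)}}(z) = G_{\mu_\ell}(z) + t\, G_{\nu_\ell}(z) + O(t^2)$, analytically on a neighborhood of infinity.

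The next step is to apply Theorem~\ref{thm:subordination} to the pair $(\mu_a^{(t)}, \mu_b^{(t)})$ to produce subordination functions $\omega_a^{(t)}, \omega_b^{(t)}$, and expand $\omega_\ell^{(t)}(z) = \omega_\ell(z) + t\, \dot\omega_\ell(z) + O(t^2)$. Differentiating the two subordination identities
\begin{equation*}
G_{\mu_a^{(t)}}(\omega_a^{(t)}(z)) = G_{\mu_b^{(t)}}(\omega_b^{(t)}(z)), \qquad \omega_a^{(t)}(z) + \omega_b^{(t)}(z) = z + F_{\mu_a^{(t)}}(\omega_a^{(t)}(z))
\end{equation*}
with respect to $t$ at $t = 0$ produces three scalar equations in the unknowns $G_{\nu_{a+b}}(z), \dot\omega_a(z), \dot\omega_b(z)$, involving $G_{\nu_a}, G_{\nu_b}, G_{\mu_a}', G_{\mu_b}', F_{\mu_a}'$ evaluated at $\omega_a(z)$ or $\omega_b(z)$. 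Combining these with the $z$-derivative identities $\omega_a'(z) + \omega_b'(z) = 1 + F_{\mu_a}'(\omega_a(z))\omega_a'(z)$ and $G_{\mu_{a+b}}'(z) = G_{\mu_a}'(\omega_a(z))\omega_a'(z)$ coming from the $t = 0$ subordination, one eliminates $\dot\omega_a$ and $\dot\omega_b$ to reach $G_{\nu_{a+b}}(z) = G_{\nu_a}(\omega_a(z))\omega_a'(z) + G_{\nu_b}(\omega_b(z))\omega_b'(z)$ by direct algebra. Well-definedness of $(\mu_{a+b}, \nu_{a+b})$ as a compactly supported type $B$ distribution then follows because the resulting $G_{\nu_{a+b}}(z)$ is $O(|z|^{-2})$ at infinity and hence is the Cauchy transform of a compactly supported signed measure of total mass zero.

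The principal obstacle is the rigorous construction of $\omega_\ell^{(t)}$, since Theorem~\ref{thm:subordination} is classically stated for genuine probability measures, while $\mu_\ell^{(t)}$ is only a signed ``formal'' measure for $t \neq 0$. I would resolve this in one of two equivalent ways: either (a) interpret the whole computation as an equality of first-order Taylor jets in $t$ at the level of moments and R-transforms, exploiting the fact that Voiculescu's additivity $R_{\mu_a^{(t)} \boxplus \mu_b^{(t)}} = R_{\mu_a^{(t)}} + R_{\mu_b^{(t)}}$ holds modulo $O(t^2)$ under infinitesimal freeness; or (b) invoke the implicit function theorem in $(z,t)$ near $(\infty, 0)$ to construct analytic $\omega_\ell^{(t)}$ for small $|t|$, and then extend in $z$ via analytic continuation using the positivity of the unperturbed solution at $t = 0$. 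Either route bypasses the lack of positivity of $\varphi_t$ and legitimizes the formal differentiation underlying the derivation above.
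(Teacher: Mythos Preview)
The paper does not prove this theorem: it is quoted as a background result from \cite{BS12} and no argument is given in the text. Your sketch is essentially the approach of Belinschi--Shlyakhtenko in that reference---realize $\varphi_t=\varphi+t\varphi'$ as a formal one-parameter deformation, use that infinitesimal freeness is freeness modulo $O(t^2)$, and differentiate the subordination relations at $t=0$---so there is no discrepancy to report beyond the fact that this paper simply cites the result rather than reproving it.
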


\begin{thm}[\hspace{-1pt}\cite{Shl18}]\label{thm:infinitesimal_freeness_gaussian}
Let $\mbf{W}_N \deq \op{GUE}/\op{GOE}(N, \frac{\sigma^2}{N})$. Then for any fixed $N_0$, the matrices $\mbf{W}_N$ and $(\mbf{E}_N^{(j, k)})_{j, k \in [N_0]}$ are asymptotically infinitesimally free.
\end{thm}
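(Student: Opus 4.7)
My plan is to verify the two defining properties of asymptotic infinitesimal freeness from Definition~\ref{def:infinitesimal_free_probability} for the pair of subalgebras generated by $\mbf{W}_N$ and by the matrix units. First I would pin down the individual type $B$ distributions. A direct calculation gives
\[
  \varphi\bigl(\mbf{E}_N^{(j_1, k_1)} \cdots \mbf{E}_N^{(j_\ell, k_\ell)}\bigr) = \tfrac{1}{N}\delta_{k_1 j_2} \cdots \delta_{k_\ell j_1} \to 0,
\]
so $\mu_{\mbf{E}} = 0$, while $\nu_{\mbf{E}}$ realizes the unnormalized trace on $N_0 \times N_0$ matrices via $\nu_{\mbf{E}}(\mbf{E}_N^{(j_1, k_1)} \cdots \mbf{E}_N^{(j_\ell, k_\ell)}) = \delta_{k_1 j_2} \cdots \delta_{k_\ell j_1}$. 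On the Wigner side, $\mu_{\mbf{W}}$ is the semicircle of radius $2\sigma$; by Harer--Zagier~\cite{HZ86} we have $\nu_{\mbf{W}} = 0$ for the GUE, and by Johansson~\cite{Joh98} $\nu_{\mbf{W}}$ is the explicit correction displayed earlier for the GOE.

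Asymptotic $\varphi$-freeness of $\mbf{W}_N$ from $(\mbf{E}_N^{(j, k)})_{j, k \in [N_0]}$ is then immediate from Voiculescu~\cite{Voi91} (resp.\ Dykema~\cite{Dyk93}), using the unitary (resp.\ orthogonal) invariance of $\mbf{W}_N$ and the fact that the matrix units form a deterministic family with a (trivially) converging $\ast$-distribution. This handles condition~(i) of infinitesimal freeness and implies that for any alternating $\varphi$-centered word $c_1 \cdots c_k$ drawn from the two subalgebras we have $\frac{1}{N}\mathbb{E}[\Tr(c_1 \cdots c_k)] = o(1)$.

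The core of the argument is to verify the Leibniz-type condition~\ref{def:Leibniz} at the $\nu$-level. For such a word $c_1 \cdots c_k$, with each $c_j$ either a polynomial in $\mbf{W}_N$ or a polynomial in the matrix units, I would expand $\frac{1}{N}\mathbb{E}[\Tr(c_1 \cdots c_k)]$ via Wick's formula applied to the Gaussian entries of $\mbf{W}_N$. Each pairing of $\mbf{W}_N$-factors yields a ribbon graph whose faces carry sums over matrix indices, with the matrix-unit factors pinning certain indices to $[N_0]$ and thereby injecting explicit powers of $1/N$ into the count. Collecting terms by order in $1/N$, the $N^0$ coefficient vanishes by asymptotic freeness, and the $N^{-1}$ coefficient admits a natural splitting into contributions of two types: (a)~those whose $1/N$ arises from an index pinning at a single matrix-unit factor $c_j$, and (b)~in the GOE case only, those whose $1/N$ arises from a non-orientable pairing of $\mbf{W}_N$-entries inside a single $\mbf{W}_N$-polynomial factor $c_j$.

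The hard part is the combinatorial identification of this $N^{-1}$ coefficient with the Leibniz sum $\sum_{j = 1}^k \varphi(c_1 \cdots \varphi'(c_j) \cdots c_k)$. The type~(a) contributions should be matched with the terms in which $\varphi'(c_j)$ is a nontrivial scalar read from $\nu_{\mbf{E}}$, and the type~(b) contributions with those in which $\varphi'(c_j)$ is a scalar from $\nu_{\mbf{W}}$; only type~(a) survives in the GUE case, consistent with $\nu_{\mbf{W}} = 0$. Establishing the precise bijection between ribbon-graph configurations carrying the $N^{-1}$ correction and the Leibniz terms, while showing that all other ribbon-graph contributions are $O(N^{-2})$, is the main technical obstacle; for this step I would follow the approach developed by Shlyakhtenko in~\cite{Shl18}.
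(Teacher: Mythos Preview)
Your proposal is correct in outline but takes a harder road than the paper (and than Shlyakhtenko's own argument in \cite{Shl18}). The key simplification you are missing is Proposition~\ref{prop:criteria_infinitesimal_freeness}: because the algebra generated by the matrix units sits inside $\ker(\varphi)$, infinitesimal freeness reduces to checking that $\varphi$ and $\varphi'$ both vanish on alternating words $E_1 Z_1 \cdots E_r Z_r$ with the $Z_s \in \interior{\mcal{Z}}$ centered. There is no need to match ribbon-graph $N^{-1}$ contributions to the individual terms of the Leibniz sum. The paper then dispatches both conditions at once via the single computation of Lemma~\ref{lem:mixed_trace_matrix_units} (the analogue of \cite[Lemma~3.2]{Shl18}):
\[
  \lim_{N \to \infty} \E\Big[\Tr\Big(\prod_{s=1}^r \mbf{E}_N^{(j_{s-1},k_s)} p_s(\mcal{Z}_N)\Big)\Big] = \prod_{s=1}^r \indc{k_s = j_s}\,\tau_{\mcal{Z}}(p_s),
\]
which is finite (so $\varphi = 0$) and vanishes when the $p_s$ are centered (so $\varphi' = 0$). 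Your ``hard part'' --- the bijection between $N^{-1}$ ribbon-graph configurations and Leibniz terms, with separate bookkeeping for the GOE correction --- is thereby sidestepped entirely; the GUE/GOE distinction enters only through the existence of $\nu_{\mbf{W}}$, not through the freeness argument itself. Note also that your deferral ``for this step I would follow \cite{Shl18}'' is somewhat circular, and in any case \cite{Shl18} itself proceeds via the criterion of Proposition~\ref{prop:criteria_infinitesimal_freeness} rather than the direct Leibniz verification you sketch.
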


Of course, we can easily compute the type $B$ distribution of the matrix units. For $\mbf{P}_N = \sum_{j = 1}^{N_0} \theta_j \mbf{E}_N^{(j, j)}$, we see that
\begin{align*}
  \lim_{N \to \infty} \mu_{\mbf{P}_N} &= \delta_0; \\
  \lim_{N \to \infty} N(\mu_{\mbf{P}_N} - \delta_0) &= \sum_{j = 1}^{N_0} \delta_{\theta_j} - N_0\delta_0.
\end{align*}
Using Theorem \ref{thm:type_B_subordination}, one obtains the $\frac{1}{N}$ correction to the LSD of the deformed Gaussian ensemble $\mbf{W}_N + \mbf{P}_N$ (cf. Theorem \ref{thm:bbp_transition}).
\begin{cor}[\hspace{-1pt}\cite{Shl18}]\label{cor:outliers_gaussian}
If $\mbf{W}_N \deq \op{GUE}(N, \frac{\sigma^2}{N})$ and $\mbf{P}_N = \sum_{j = 1}^{N_0} \theta_j \mbf{E}_N^{(j, j)}$, then the type $B$ distribution of $\mbf{W}_N + \mbf{P}_N$ is given by
\begin{align*}
  &\bigg(\frac{1}{2\pi\sigma^2} \sqrt{4\sigma^2 - t^2}\, dt, 0\bigg) \boxplus_B \bigg(\delta_0, \sum_{j = 1}^{N_0} \delta_{\theta_j} - N_0\delta_0\bigg) \\
  = &\bigg(\frac{1}{2\pi\sigma^2} \sqrt{4\sigma^2 - t^2}\, dt, \sum_{\substack{j \in [N_0]: \\ |\theta_j| \geq \sigma}} \delta_{\theta_j + \frac{\sigma^2}{\theta_j}} - \sum_{j \in [N_0]} \nu_j \bigg),
\end{align*}
where
\[
  \nu_j = \frac{\theta_j(t - 2\theta_j)}{2\pi(\theta_j(t - \theta_j) - \sigma^2)\sqrt{4\sigma^2 - t^2}}\, dt
\]
is a probability measure if $|\theta_j| \geq \sigma$; otherwise, $\nu_j$ is a signed measure of total mass zero with Jordan decomposition $\nu_j = \nu_j^+ - \nu_j^-$, where
\[
  \nu_j^+ =
  \begin{dcases}
    \indc{t \in [-2\sigma, 2\theta_j]}\frac{d\nu_j}{dt} & \text{if $\theta_j > 0$}; \\
    \indc{t \in [2\theta_j, 2\sigma]}\frac{d\nu_j}{dt}  & \text{if $\theta_j < 0$}.
  \end{dcases}
\]

If instead $\mbf{W}_N \deq \op{GOE}(N, \frac{\sigma^2}{N})$, then the type $B$ distribution of $\mbf{W}_N + \mbf{P}_N$ is given by
\begin{align*}
  &\bigg(\frac{1}{2\pi\sigma^2} \sqrt{4\sigma^2-t^2}\, dt,  \frac{1}{2}\bigg[\frac{1}{2}\delta_{\pm 2\sigma} - \frac{1}{\pi\sqrt{4\sigma^2-t^2}}\, dt\bigg]\bigg) \boxplus_B \bigg(\delta_0, \sum_{j = 1}^{N_0} \delta_{\theta_j} - N_0\delta_0\bigg) \\
  = &\bigg(\frac{1}{2\pi\sigma^2} \sqrt{4\sigma^2-t^2}\, dt,  \frac{1}{2}\bigg[\frac{1}{2}\delta_{\pm 2\sigma} - \frac{1}{\pi\sqrt{4\sigma^2-t^2}}\, dt\bigg] + \sum_{\substack{j \in [N_0]: \\ |\theta_j| \geq \sigma}} \delta_{\theta_j + \frac{\sigma^2}{\theta_j}} - \sum_{j \in [N_0]} \nu_j \bigg),
\end{align*}
where $\nu_j$ is as before.
\end{cor}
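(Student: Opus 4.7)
The plan is to read this off directly from Theorem~\ref{thm:type_B_subordination} and Theorem~\ref{thm:infinitesimal_freeness_gaussian}, with the one nontrivial input being a Stieltjes-inversion identification of the resulting Cauchy transform. First, Theorem~\ref{thm:infinitesimal_freeness_gaussian} guarantees that $\mbf{W}_N$ and $\mbf{P}_N$ are asymptotically infinitesimally free, so the type~$B$ distribution of $\mbf{W}_N + \mbf{P}_N$ converges to the type~$B$ free convolution of their individual type~$B$ distributions. The type~$B$ distribution of $\mbf{W}_N$ is $\bigl(\frac{1}{2\pi\sigma^2}\sqrt{4\sigma^2 - t^2}\,dt,\,0\bigr)$ in the GUE case by the genus expansion \cite{HZ86} and $\bigl(\frac{1}{2\pi\sigma^2}\sqrt{4\sigma^2-t^2}\,dt,\, \frac{1}{2}[\frac{1}{2}\delta_{\pm 2\sigma} - \frac{1}{\pi\sqrt{4\sigma^2-t^2}}\,dt]\bigr)$ in the GOE case by Johansson~\cite{Joh98}, as recalled in the text. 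For $\mbf{P}_N$ the limits are computed directly: $\lim_N \mu_{\mbf{P}_N} = \delta_0$ and $\lim_N N(\mu_{\mbf{P}_N} - \delta_0) = \sum_{j=1}^{N_0} \delta_{\theta_j} - N_0\delta_0$.

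Next, I would compute the subordination functions from Theorem~\ref{thm:subordination} for $(\mu_a, \mu_b) = (\mu_{\mbf{W}}, \delta_0)$. Since $\mu_{\mbf{W}} \boxplus \delta_0 = \mu_{\mbf{W}}$, the identity $G_{\mu_a}(\omega_a(z)) = G_{\mu_b}(\omega_b(z))$ forces $\omega_a(z) = z$ and $\omega_b(z) = F_{\mu_{\mbf{W}}}(z) = \tfrac{1}{2}(z + \sqrt{z^2 - 4\sigma^2})$, with $\omega_b'(z) = \tfrac{1}{2}\bigl(1 + z/\sqrt{z^2-4\sigma^2}\bigr)$. Theorem~\ref{thm:type_B_subordination}(ii) then yields
\[
G_{\nu_{a+b}}(z) = G_{\nu_a}(z) + \Bigl[\sum_{j=1}^{N_0} \frac{1}{\omega_b(z) - \theta_j} - \frac{N_0}{\omega_b(z)}\Bigr] \omega_b'(z),
\]
with the first term dropping out in the GUE case and equalling $G_{\nu_{\mbf{W}}}(z)$ in the GOE case, producing the $\mu_{\mbf{W}}$ term plus the $\frac{1}{2}[\frac{1}{2}\delta_{\pm 2\sigma} - \frac{1}{\pi\sqrt{4\sigma^2-t^2}}\,dt]$ contribution automatically.

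The main obstacle, and the only genuine calculation, is to apply Stieltjes inversion to the second term to recover the signed-measure representation in the statement. For each $j$, I would solve $\omega_b(z) = \theta_j$: this gives $z = \theta_j + \sigma^2/\theta_j = \rho_{\theta_j}$, which lies outside $[-2\sigma, 2\sigma]$ precisely when $|\theta_j| > \sigma$. When $|\theta_j| \geq \sigma$, the function $z \mapsto 1/(\omega_b(z) - \theta_j)$ has a simple pole at $\rho_{\theta_j}$; computing the residue against $\omega_b'(z)$ yields a Dirac mass of weight $+1$ at $\rho_{\theta_j}$, accounting for the $\delta_{\theta_j + \sigma^2/\theta_j}$ term. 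The remaining continuous contribution comes from the jump of $G_{\nu_{a+b}}$ across $(-2\sigma, 2\sigma)$: a short algebraic simplification of $\tfrac{1}{\omega_b(z) - \theta_j}\omega_b'(z) - \tfrac{1}{\omega_b(z)}\omega_b'(z)$ in terms of $\omega_b$ and $\sqrt{z^2 - 4\sigma^2}$, followed by taking $z = t + i0^+$ with $t \in (-2\sigma, 2\sigma)$, gives the density $\frac{\theta_j(t - 2\theta_j)}{2\pi(\theta_j(t-\theta_j) - \sigma^2)\sqrt{4\sigma^2 - t^2}}$ after collecting the $N_0$ terms. In the subcritical case $|\theta_j| < \sigma$, the same density describes a signed measure of total mass zero, whose Jordan decomposition is obtained by locating the sign change of the numerator at $t = 2\theta_j \in (-2\sigma, 2\sigma)$; this gives the stated support for $\nu_j^+$.
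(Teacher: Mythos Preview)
Your proposal is correct and follows exactly the approach the paper points to: the paper does not give its own proof of this corollary but records it as a result from \cite{Shl18}, and at the end of \S\ref{sec:finite_rank_perturbations} remarks that the type~$B$ convolution calculations ``essentially already appear in \cite[\S 4.1.1]{Shl18}, so we do not repeat them.'' Your outline---identify the subordination functions $\omega_a(z)=z$, $\omega_b(z)=F_{\mu_{\mbf{W}}}(z)=\tfrac12(z+\sqrt{z^2-4\sigma^2})$, plug into Theorem~\ref{thm:type_B_subordination}(ii), and Stieltjes-invert---is precisely that calculation.
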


The proof of Theorem \ref{thm:infinitesimal_freeness_gaussian} relies on Wick's formula for Gaussian integration. Naturally, one can ask if the result extends to general Wigner matrices. In this case, one needs to first prove the existence of an infinitesimal distribution for the single matrix model, a calculation carried out by Enriquez and M\'{e}nard (see also \cite{KKP96}). We state a slight generalization of their result to allow for entries with possibly different distributions: the proof remains unchanged.

\begin{thm}[\hspace{-1pt}\cite{EM16}]\label{thm:infinitesimal_wigner}
For each $N \in \N$, let $(X_{j, k}^{(N)})_{j \leq k \in [N]}$ be a family of independent random variables, the off-diagonal entries $j < k$ possibly being complex-valued. We assume that the diagonal entries $j = k$ are centered with identical variance:
\[
  \E[X_{j, j}^{(N)}] = 0 \quad \text{and} \quad \E\big[|X_{j, j}^{(N)}|^2\big] = s^2, \qquad \forall j \in [N].
\]
For $(X_{j ,k}^{(N)})_{j < k \in [N]}$ real-valued ($\beta = 1$), we assume that the off-diagonal entries $j < k$ are centered with identical variance and fourth moments:
\[
  \E[X_{j, k}^{(N)}] = 0, \quad \E\big[|X_{j, k}^{(N)}|^2\big] = \sigma^2, \quad \text{and} \quad \E\big[|X_{j, k}^{(N)}|^4\big] = \alpha, \qquad \forall j < k \in [N].
\]
For $(X_{j ,k}^{(N)})_{j < k \in [N]}$ complex-valued ($\beta = 2$), we assume that the pseudo-variance of each off-diagonal entry vanishes in addition to the conditions above:
\[
  \E\big[(X_{j ,k}^{(N)})^2\big] = 0, \qquad \forall j < k \in [N].
\]
Lastly, we assume a strong uniform control on the moments:
\[
  \sup_{N \in \N} \sup_{j \leq k \in [N]} \E\big[|X_{j, k}^{(N)}|^\ell\big] < \infty, \qquad \forall \ell \in \N.
\]
Then the corresponding Wigner matrix $\mbf{W}_N(j, k) = \frac{1}{\sqrt{N}}X_{j, k}^{(N)}$ has an infinitesimal distribution $\nu = \frac{1}{2}\Big[\frac{\indc{\beta = 1}}{2} \delta_{\pm 2\sigma} + \nu_{\op{ac}}\Big]$, where\small
\[
  \nu_{\op{ac}} = \bigg[\Big(\frac{\alpha}{\sigma^4} + \beta - 4\Big)t^4 + \Big(\frac{s^2}{\sigma^2} -4\frac{\alpha}{\sigma^4} - 3\beta + 13\Big)t^2 + 2\Big(\frac{\alpha}{\sigma^4} - \frac{s^2}{\sigma^2} - 2\Big) + \beta\bigg] \frac{1}{\pi\sqrt{4\sigma^2 - t^2}}\, dt.
\]\normalsize
\end{thm}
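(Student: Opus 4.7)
The plan is to follow the method of moments as in Enriquez--M\'{e}nard \cite{EM16}, since their argument extends verbatim to non-identically distributed entries. Writing $\mathbf{W}_N = \frac{1}{\sqrt N} \mathbf{X}_N$ and expanding
\[
\E[\Tr(\mathbf{W}_N^\ell)] = N^{-\ell/2} \sum_{\gamma} \E\Big[\prod_{e \in \gamma} X_e^{(N)}\Big]
\]
over closed walks $\gamma$ of length $\ell$ on $[N]$, one organizes the sum by the combinatorial type of the underlying edge-coloured multigraph and applies the cumulant expansion edge-class by edge-class. The uniform moment bounds ensure that each decorated multigraph contributes a uniformly bounded factor, so the non-identical distribution hypothesis enters only through uniformly bounded coefficients; only the common variances $s^2, \sigma^2$ and the common fourth moment $\alpha$ survive in the leading and subleading terms.

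Odd $\ell$ contribute nothing in the limit. For even $\ell = 2m$, the leading $O(N)$ behaviour is supplied by non-crossing pair partitions on trees, reproducing the semicircle moment $N \cdot C_m \sigma^{2m}$. To extract the $O(1)$ infinitesimal moment
\[
\nu(x^{2m}) = \lim_{N \to \infty} \E[\Tr(\mathbf{W}_N^{2m})] - N C_m \sigma^{2m},
\]
I isolate three sources of surviving subleading contributions:
\begin{enumerate}[label=(\alph*)]
\item \emph{Pair partitions of genus $1/2$}, which exist only in the real case and correspond to closed walks on one-face maps on the projective plane; these contribute the atomic part $\frac{\indc{\beta=1}}{4}\delta_{\pm 2\sigma}$ together with the $\beta$-dependent terms of the density $\nu_{\op{ac}}$.
\item \emph{Non-crossing pair partitions with one diagonal coincidence}: expanding $\prod_e [\sigma^2 + (s^2 - \sigma^2)\indc{f(u_e) = f(v_e)}]$ as a partition function over labelings $f$ of the spanning tree extracts an $O(1)$ correction of order $m C_m (s^2 - \sigma^2)\sigma^{2m-2}$, producing the $(s^2/\sigma^2)$-dependent terms in $\nu_{\op{ac}}$.
\item \emph{Partitions with a single block of size four on an off-diagonal edge}, contributing the fourth cumulant $k_4 = \alpha - (4 - \beta)\sigma^4$ and producing the $(\alpha/\sigma^4)$-dependent terms in $\nu_{\op{ac}}$, including the $(\alpha/\sigma^4 + \beta - 4)t^4$ leading factor.
\end{enumerate}
Contributions from cumulants of order $\geq 3$ (vanishing by oddness or suppressed by a power of $N$), edge multiplicities $\geq 6$, multiple 4-blocks, or pair partitions of genus $\geq 1$ are each $O(N^{-1})$ and drop out; this is where the uniform higher-moment bounds are used.

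Summing (a)--(c) yields a signed moment sequence $(\nu(x^{2m}))_{m \geq 0}$, which I would then match against the moments of the stated measure by computing its Cauchy transform. Sanity checks: $\beta = 2$, $s^2 = \sigma^2$, $\alpha = 2\sigma^4$ gives $\nu = 0$ (matching the genus expansion for GUE), while $\beta = 1$, $s^2 = 2\sigma^2$, $\alpha = 3\sigma^4$ gives Johansson's formula. The main obstacle is the enumeration underlying (a), which requires careful tracking of one-face maps on the projective plane, together with the precise $N$-expansion of the partition function in (b); both are the technical heart of the Enriquez--M\'{e}nard argument. By contrast, the generalization to non-identically distributed entries is not itself an obstacle: since only the common moments $s^2$, $\sigma^2$, $\alpha$ enter the surviving terms, the combinatorics of \cite{EM16} applies unchanged, with the higher moments absorbed into uniformly bounded error terms of order $O(N^{-1})$.
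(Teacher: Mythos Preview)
Your proposal is aligned with the paper's treatment: the paper does not give its own proof of this theorem but simply cites \cite{EM16} and remarks that ``the proof remains unchanged'' for the slight generalization to non-identically distributed entries. Your outline of the Enriquez--M\'{e}nard moment-method argument, together with your observation that only the common moments $s^2$, $\sigma^2$, $\alpha$ survive in the $O(1)$ terms (the non-identical higher moments being absorbed into uniformly bounded $O(N^{-1})$ errors via the strong moment control), is precisely the justification the paper is invoking.
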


\subsection{Statement of results}\label{sec:results}

Our first result extends Theorem \ref{thm:infinitesimal_freeness_gaussian} to general Wigner matrices. We also consider perturbations of the form $\frac{\theta}{N}\mbf{J}_N$, where we recall that $\mbf{J}_N$ is the all-ones matrix.

\begin{thm}\label{thm:infinitesimal_freeness_wigner}
Let $\mbf{W}_N$ be a Wigner matrix of the form in Theorem \ref{thm:infinitesimal_wigner}. Then for any fixed $N_0$, the matrices $\mbf{W}_N$, $(\mbf{E}_N^{(j, k)})_{j, k \in [N_0]}$, and $\frac{1}{N}\mbf{J}_N$ are asymptotically infinitesimally free.
\end{thm}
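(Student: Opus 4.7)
The plan is to verify the moment-level characterization of asymptotic infinitesimal freeness (Definition \ref{def:infinitesimal_free_probability}) directly, by expanding $\frac{1}{N}\E[\Tr(P)]$ for an alternating centered monomial $P$ in the three families up to order $1/N$. The asymptotic (type $A$) freeness of a Wigner matrix from a fixed converging family of deterministic matrices is already known \cite{Dyk93}, and the matrix units together with $\frac{1}{N}\mbf{J}_N$ form such a family, so the new content lies in the $1/N$ correction and the Leibniz-type identity of Definition \ref{def:infinitesimal_free_probability}(ii).

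The natural tool is traffic probability, as used in \cite{Au18}. Writing $\frac{1}{N}\mbf{J}_N = \frac{1}{N}\sum_{j,k\in[N]}\mbf{E}_N^{(j,k)}$ makes every non-Wigner factor a matrix unit (possibly with a $1/N$ weight and a summation). Expanding $\E[\Tr(P)]$ via the Wick / cumulant expansion then produces a sum over labeled graphs: vertices carry free indices along the trace cycle, Wigner edges encode the pair cumulants (or the diagonal and fourth-moment cumulants), and matrix-unit factors impose identifications on the endpoint vertices. The power of $N$ attached to each graph equals the number of free vertices minus the weight coming from the $\mbf{J}$-factors, refining the genus-type argument behind Theorem \ref{thm:infinitesimal_wigner}. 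The leading $O(N)$ contributions come from planar Wigner pair partitions with globally consistent pinnings and recover the known asymptotic freeness. The $O(1)$ contributions form a short list of \emph{single-defect} diagrams: a planar Wigner partition together with either one crossing (the $\beta=1$ contribution), one block of size four (weighted by $\alpha$), one diagonal pair (weighted by $s^{2}$), or one pinning-induced loss of a free vertex caused by a matrix-unit or $\mbf{J}_N$ factor.

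The main step --- and the main obstacle --- is to organize these $O(1)$ diagrams into the Leibniz-type sum $\sum_{j} \varphi(a_{1}\cdots a_{j-1}\, \varphi'(a_{j})\, a_{j+1}\cdots a_{k})$. I would argue by an insertion principle: every single-defect diagram canonically identifies a distinguished factor $a_{j}$ at which its defect lives; contracting that defect leaves a leading-order planar diagram for the reduced product $a_{1}\cdots a_{j-1}\,a_{j+1}\cdots a_{k}$ evaluated against $\varphi$, while the defect itself contributes exactly $\varphi'(a_{j})$. Summing over $j$ recovers the Leibniz identity. Two subtleties must be tracked carefully. First, the Wigner infinitesimal distribution is non-trivial only in the $\beta=1$ case and contributes via an edge crossing rather than a vertex collapse, matching the $\frac{1}{2}\cdot\frac{1}{2}\delta_{\pm 2\sigma}$ and $\nu_{\op{ac}}$ pieces of Theorem \ref{thm:infinitesimal_wigner}. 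Second, the defect produced by a $\frac{1}{N}\mbf{J}_N$-factor splits into a pinned contribution and a free-summation contribution, which must be matched against the type $B$ distribution $(\delta_{0},\,\delta_{1}-\delta_{0})$ of $\frac{1}{N}\mbf{J}_N$. The traffic framework of \cite{Au18} absorbs both kinds of bookkeeping cleanly, at which point the theorem reduces to a finite check of single-defect graphs against the explicit infinitesimal distributions of the three families.
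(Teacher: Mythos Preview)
Your route is genuinely different from the paper's, and it is harder than it needs to be. The key simplification you are missing is Proposition~\ref{prop:criteria_infinitesimal_freeness} (Shlyakhtenko's criterion): since the matrix units and $\frac{1}{N}\mbf{J}_N$ all lie in $\ker(\varphi)$, the non-unital algebra $\mcal{F}_N$ they generate is contained in $\ker(\varphi)$, and infinitesimal freeness of $\mbf{W}_N$ from $\mcal{F}_N$ reduces to the single statement
\[
\varphi'(E_1 Z_1 \cdots E_r Z_r)=0, \qquad E_s\in\mcal{F}_N,\ Z_s\in\interior{\mcal{Z}}.
\]
No Leibniz sum needs to be organized: the right-hand side of the Leibniz identity is automatically zero because every surviving $\varphi$-factor contains an element of $\ker(\varphi)$. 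The paper then proves the vanishing of the left-hand side by showing directly (Lemmas~\ref{lem:mixed_trace_matrix_units}, \ref{lem:mixed_trace_all_ones}, \ref{lem:mixed_trace_matrix_units_and_all_ones}) that the \emph{unnormalized} mixed trace factorizes,
\[
\lim_{N\to\infty}\E\Big[\Tr\Big(\prod_{s=1}^r \mbf{E}_N^{(j_{s-1},k_s)} p_s(\mbf{W}_N)\Big)\Big]=\prod_{s=1}^r\indc{k_s=j_s}\tau_{\mcal{Z}}(p_s),
\]
which vanishes as soon as any $p_s$ is centered. The infinitesimal distribution $\nu$ of $\mbf{W}_N$ never enters these mixed computations; the presence of a single matrix-unit or $\mbf{K}_N$ factor already absorbs the entire $1/N$ budget.

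This also explains the gap in your ``insertion principle''. You propose that each $O(1)$ single-defect diagram localizes to a factor $a_j$ and contributes $\varphi'(a_j)$ times a leading-order term, but in the alternating setting every such leading-order term involves a $\varphi$ applied to a product still containing an $E_s$, hence vanishes. So the diagram-by-diagram matching you describe is vacuous: what actually has to be shown is that the total of all single-defect diagrams is zero, and your localization argument does not address the defects that straddle two different $p_s$'s (e.g.\ a Wigner pairing or a fourth-cumulant block linking edges from $p_s$ and $p_{s'}$). The paper handles these via a spanning-tree bound on the number of admissible labelings, showing that any such cross-factor identification costs a full degree of freedom and is therefore $o(1)$. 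Finally, a small factual slip: the Wigner infinitesimal distribution of Theorem~\ref{thm:infinitesimal_wigner} is non-trivial for both $\beta=1$ and $\beta=2$ in general (the $\nu_{\op{ac}}$ piece depends on $\alpha$ and $s^2$); only the discrete part $\tfrac{1}{2}\delta_{\pm 2\sigma}$ is specific to $\beta=1$.
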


Note that the type $B$ distribution of $\frac{\theta}{N}\mbf{J}_N$ is identical to that of $\theta \mbf{E}_N^{(j, j)}$, allowing us to essentially repeat the calculation of Corollary \ref{cor:outliers_gaussian}.

\begin{cor}\label{cor:outliers_wigner}
The type $B$ distribution of the deformed Wigner ensemble
\[
  \mbf{W}_N + \sum_{j = 1}^{N_0} \theta_j \mbf{E}_N^{(j, j)} + \frac{\theta_{N_0 + 1}}{N}\mbf{J}_N
\]
is given by
\begin{gather*}
  \bigg(\frac{1}{2\pi\sigma^2} \sqrt{4\sigma^2 - t^2}\, dt, \nu\bigg) \boxplus_B \bigg(\delta_0, \sum_{j = 1}^{N_0} \delta_{\theta_j} - N_0\delta_0\bigg) \boxplus_B \bigg(\delta_0, \delta_{\theta_{N_0 +1}} - \delta_0\bigg)\\
  = \bigg(\frac{1}{2\pi\sigma^2} \sqrt{4\sigma^2 - t^2}\, dt, \nu + \sum_{\substack{j \in [N_0 + 1]: \\ |\theta_j| \geq \sigma}} \delta_{\theta_j + \frac{\sigma^2}{\theta_j}} - \sum_{j \in [N_0 + 1]} \nu_j \bigg),
\end{gather*}
where $\nu$ is as in Theorem \ref{thm:infinitesimal_wigner} and $\nu_j$ is as in Corollary \ref{cor:outliers_gaussian}.
\end{cor}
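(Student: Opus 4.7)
The plan is to combine Theorem \ref{thm:infinitesimal_freeness_wigner} with two applications of Theorem \ref{thm:type_B_subordination}, thereby reducing the proof to the same subordination computation that was carried out in Corollary \ref{cor:outliers_gaussian}. The only new ingredients compared to the GUE/GOE case are a nontrivial infinitesimal marginal $\nu$ on the Wigner side and the extra summand $\frac{\theta_{N_0+1}}{N}\mbf{J}_N$, whose type $B$ distribution will turn out to be of exactly the same form as a rank-one matrix-unit perturbation.

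First I would record the type $B$ distribution of each summand. For $\mbf{W}_N$ Theorem \ref{thm:infinitesimal_wigner} supplies $\bigl(\tfrac{1}{2\pi\sigma^2}\sqrt{4\sigma^2-t^2}\,dt,\nu\bigr)$. The diagonal perturbation $\sum_{j=1}^{N_0}\theta_j\mbf{E}_N^{(j,j)}$ has ESD $\tfrac{N-N_0}{N}\delta_0+\tfrac{1}{N}\sum_j\delta_{\theta_j}$, so by Example \ref{eg:infinitesimal_random_matrices} its type $B$ distribution is $\bigl(\delta_0,\sum_{j=1}^{N_0}\delta_{\theta_j}-N_0\delta_0\bigr)$. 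Since $\mbf{J}_N$ has spectrum $\{N,0,\ldots,0\}$, the rank-one matrix $\frac{\theta_{N_0+1}}{N}\mbf{J}_N$ has ESD $\tfrac{N-1}{N}\delta_0+\tfrac{1}{N}\delta_{\theta_{N_0+1}}$ and hence type $B$ distribution $\bigl(\delta_0,\delta_{\theta_{N_0+1}}-\delta_0\bigr)$.

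Next I would appeal to Theorem \ref{thm:infinitesimal_freeness_wigner} for asymptotic infinitesimal freeness of the three families and group the two perturbations. Since infinitesimal freeness is preserved under grouping of subalgebras (immediate from the moment characterization in Definition \ref{def:infinitesimal_free_probability} combined with the classical analogue for free independence), Theorem \ref{thm:type_B_subordination} may be iterated. When both marginals are $\delta_0$ the subordination functions reduce to $\omega_1(z)=\omega_2(z)=z$, so the infinitesimal parts simply add; thus the combined perturbation has type $B$ distribution $\bigl(\delta_0,\sum_{j=1}^{N_0+1}\delta_{\theta_j}-(N_0+1)\delta_0\bigr)$.

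Finally I would type-$B$ convolve this combined perturbation with $\mbf{W}_N$. The subordination equations for $\mu_{\mbf{W}}\boxplus\delta_0=\mu_{\mbf{W}}$ are solved by $\omega_1(z)=z$ and $\omega_2(z)=F_{\mu_{\mbf{W}}}(z)$, leading to
\[
G_{\nu_{\mbf{W}+\mbf{P}}}(z)=G_\nu(z)+\sum_{j=1}^{N_0+1}\bigg[\frac{F_{\mu_{\mbf{W}}}'(z)}{F_{\mu_{\mbf{W}}}(z)-\theta_j}-\frac{F_{\mu_{\mbf{W}}}'(z)}{F_{\mu_{\mbf{W}}}(z)}\bigg].
\]
Stieltjes-inverting the first summand recovers $\nu$, while each bracketed term is exactly the Cauchy transform analyzed in the proof of Corollary \ref{cor:outliers_gaussian}: it contributes a Dirac mass at $\theta_j+\sigma^2/\theta_j$ precisely when $|\theta_j|\geq\sigma$, together with the absolutely continuous correction $-\nu_j$ on $[-2\sigma,2\sigma]$. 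The only genuine obstacle is confirming that the grouping step preserves infinitesimal freeness so the binary subordination identity may be iterated; beyond that, the argument is a verbatim transcription of the residue computation from Corollary \ref{cor:outliers_gaussian}.
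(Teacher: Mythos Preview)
Your proposal is correct and follows the same route as the paper: observe that $\frac{\theta_{N_0+1}}{N}\mbf{J}_N$ has the same type $B$ distribution as $\theta_{N_0+1}\mbf{E}_N^{(j,j)}$, invoke Theorem \ref{thm:infinitesimal_freeness_wigner} for asymptotic infinitesimal freeness, and then repeat the subordination computation from Corollary \ref{cor:outliers_gaussian} (which the paper explicitly defers to \cite[\S4.1.1]{Shl18} without repeating). Your concern about the grouping step is not a genuine obstacle---associativity of the type $B$ convolution and the preservation of infinitesimal freeness under combining subalgebras are both routine, and in any case the statement of the corollary already presents the convolution as an iterated binary operation.
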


\begin{rem}\label{rem:perturbations}
The result above shows that while the infinitesimal distribution is sensitive to outliers, it fails to distinguish their fluctuations. Indeed, recall that the fluctuations of the extremal eigenvalue under perturbations of the form $\theta \mbf{E}_N^{(1, 1)}$ (\emph{resp.,} $\frac{\theta}{N}\mbf{J}_N$) are non-universal (\emph{resp.,} universal) for $|\theta| > \sigma$, whereas the infinitesimal distribution of $\mbf{W}_N + \theta\mbf{E}_N^{(1, 1)}$ and $\mbf{W}_N + \frac{\theta}{N}\mbf{J}_N$ are identical. In general, the fluctuations of the extremal eigenvalues depend on the geometry of the eigenvectors of the perturbation: localized (as in the case of $\sum_{j = 1}^{N_0} \theta_j \mbf{E}_N^{(j, j)}$) versus delocalized (as in the case of $\frac{\theta_{N_0 + 1}}{N}\mbf{J}_N$) \cite{CDMF12}.
\end{rem}

The usual strategy for studying outliers relies on a fine analysis of the resolvent, using delicate estimates currently unavailable for non mean-field ensembles. In contrast, the purview of the infinitesimal framework extends quite naturally to random band matrices. We restrict ourselves to the idealized situation of a periodically banded GUE matrix.

\begin{defn}[Random band matrix]\label{def:rbm}
Let $\mbf{X}_N \deq \op{GUE}(N, \sigma^2)$. For a band width $b_N \geq 0$, we define $\mbf{B}_N$ to be the corresponding periodic band matrix of ones:
\[
  \mbf{B}_N(j, k) = \indc{|j - k|_N \leq b_N},
\]
where
\begin{equation}\label{eq:periodic_band_width}
  |j - k|_N = \min(|j - k|, N - |j - k|).
\end{equation}
We assume that the band width $b_N \to \infty$, and we set
\[
  \xi_N = \min(2b_N + 1, N).
\]
We call the random matrix
\[
  \mbf{\Xi}_N = \frac{1}{\sqrt{\xi_N}} \mbf{B}_N \circ \mbf{X}_N
\]
a \emph{(normalized) periodically banded GUE matrix (of band width $b_N$)}. Of course, if $b_N \geq \floor{N/2}$, then $\mbf{\Xi}_N \deq \op{GUE}(N, \frac{\sigma^2}{N})$.
\end{defn}

Bogachev, Molchanov, and Pastur proved that the ESD $\mu(\mbf{\Xi}_N)$ converges weakly almost surely to the semicircle distribution \cite{BMP91}. In particular, this holds regardless of the rate $b_N \to \infty$ because of the periodic band width structure \eqref{eq:periodic_band_width}. We considered the multi-matrix case in \cite{Au18}, where it was shown that independent copies $(\mbf{\Xi}_N^{(i)})_{i \in I}$ of $\mbf{\Xi}_N$ are asymptotically free, regardless of the relative rates of growth of the band widths $(b_N^{(i)})_{i \in I}$. So, for example, it could be that
\[
  b_N^{(1)}, b_N^{(2)} \ll \sqrt{N} \ll b_N^{(3)}, b_N^{(4)}.
\]
We highlight this homogeneity around $\sqrt{N}$ because of its conjectural role, confirmed at the level of physical rigor, as the critical value for the localization-delocalization transition for random band matrices (again, see \cite{Bou18} for a recent survey).

While the rate $b_N \to \infty$ did not play a role in our calculations at the zeroth order, a $\sqrt{N}$ factor appears quite naturally at the first order. Our next result proves a sharp transition for the infinitesimal distribution around this rate.

\begin{thm}\label{thm:infinitesimal_band}
Let $\mbf{\Xi}_N$ be a periodically banded GUE matrix of band width $b_N$. Then for any $\ell \in \N$,
\[
  \lim_{N \to \infty} \E[{\Tr}(\mbf{\Xi}_N^{2\ell})] - N\sigma^{2\ell}{\Cat}(\ell) =
  \begin{dcases}
    0 &\text{if $b_N \gg \sqrt{N}$};\\
    \infty &\text{if $b_N \ll \sqrt{N}$};\\
    m_{2\ell}(\sigma^2, c) &\text{if $\lim_{N \to \infty} \frac{b_N}{\sqrt{N}} = c \in (0, \infty)$}. 
  \end{dcases}
\]
where ${\Cat}(\ell) = \frac{\binom{2\ell}{\ell}}{\ell+1}$ is the $\ell$th Catalan number, $m_2(\sigma^2, c) = 0$, and $m_{2\ell}(\sigma^2, c) \in (0, \infty)$ for $\ell \geq 2$. In particular, if $b_N \gg \sqrt{N}$, then the type $B$ distribution of $\mbf{\Xi}_N$ exists and agrees with that of a usual GUE matrix $\mbf{W}_N$. 
\end{thm}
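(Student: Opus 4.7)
The plan is to compute $\E[\Tr(\mbf{\Xi}_N^{2\ell})]$ via Wick's formula for the GUE entries of $\mbf{X}_N$ and then analyze the sum over pair partitions by the genus of the associated ribbon graph. Expanding gives
\[
\E[\Tr(\mbf{\Xi}_N^{2\ell})] = \frac{\sigma^{2\ell}}{\xi_N^\ell}\sum_{\pi \in \mcal{P}_2(2\ell)} T_\pi(N, b_N),
\]
where $T_\pi(N, b_N)$ counts tuples $(i_1, \ldots, i_{2\ell})$ satisfying both the Wick identifications $i_s = i_{t+1}$, $i_{s+1} = i_t$ from each pair $\{s, t\} \in \pi$ and the band constraints $|i_s - i_{s+1}|_N \leq b_N$. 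Viewing $\pi$ as a one-vertex ribbon graph, these identifications collapse the $i_s$ onto face labels, so $T_\pi$ counts face-labelings in $\Z/N\Z$ respecting the band constraint $|y_f - y_{f'}|_N \leq b_N$ across each ribbon edge (connecting dual faces $f, f'$); by Euler's formula the face count is $F(\pi) = \ell + 1 - 2g(\pi)$. For non-crossing $\pi$, the face-adjacency dual is a tree on $\ell + 1$ vertices, so a free root label ($N$ choices) together with independent steps across the $\ell$ tree edges ($\xi_N$ each) give exactly $T_\pi = N\xi_N^\ell$ (for $b_N < N/2$); summing over the $\Cat(\ell)$ non-crossing pairings cancels the subtraction, leaving
\[
\E[\Tr(\mbf{\Xi}_N^{2\ell})] - N\sigma^{2\ell}\Cat(\ell) = \sigma^{2\ell}\sum_{g \geq 1}\sum_{\pi : g(\pi) = g} \frac{T_\pi(N, b_N)}{\xi_N^\ell}.
\]

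For $g(\pi) = g \geq 1$, I parameterize $T_\pi$ by a spanning tree of the dual face multigraph: the root face contributes $N$, the $F - 1 = \ell - 2g$ tree edge steps each contribute $\xi_N$, and the remaining $2g$ non-tree edges impose cycle constraints requiring specific signed linear combinations of tree steps to lie in $\{-b_N, \ldots, b_N\}$ modulo $N$. Since $b_N \ll N$ in the regime of interest and each cycle sum involves at most $\ell$ bounded terms, the modular conditions reduce to integer inequalities for $N$ large. Rescaling tree steps by $b_N$ and applying a Riemann-sum argument yields
\[
T_\pi(N, b_N) = C_\pi \cdot N \cdot \xi_N^{\ell - 2g}\,(1 + o(1)),
\]
where $C_\pi \in (0, 1]$ is the normalized Lebesgue measure of the polytope $P_\pi \subseteq [-1, 1]^{\ell - 2g}$ cut out by the rescaled cycle inequalities $|L_e(u)| \leq 1$ (one per non-tree edge $e$). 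Hence $\pi$ contributes asymptotically $C_\pi \sigma^{2\ell} N / \xi_N^{2g}$ to the correction. Crucially, $P_\pi$ always contains a neighborhood of the origin, so $C_\pi > 0$ for every genus.

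With $\xi_N \sim 2b_N$, the three regimes follow immediately. If $b_N \gg \sqrt{N}$ then $N/\xi_N^{2g} \to 0$ for every $g \geq 1$, so the correction vanishes; since odd moments are zero by symmetry, $\nu_{\mbf{\Xi}}$ exists and equals $0 = \nu_{\mbf{W}}$, giving the type $B$ agreement with GUE. If $b_N \sim c\sqrt{N}$ then $N/\xi_N^{2g} \to (2c)^{-2g} N^{1-g}$, which is $\Theta(1)$ only for $g = 1$, so in the limit
\[
m_{2\ell}(\sigma^2, c) = \frac{\sigma^{2\ell}}{4c^2}\sum_{\pi : g(\pi) = 1} C_\pi.
\]
This is $0$ for $\ell = 1$ (only non-crossing pairings exist); for $\ell \geq 2$, it is strictly positive and finite because there are finitely many genus $1$ pairings, each with $C_\pi \in (0, 1]$, and at least one exists: e.g.\ the pairing $(1,3)(2,4)(5,6)\cdots(2\ell-1, 2\ell)$ is genus $1$ with $C_\pi = 1$ (both non-tree edges are loops in the dual at the face containing positions $1,2,3,4,5,7,\ldots,2\ell-1$, imposing trivial cycle constraints). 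Finally, if $b_N \ll \sqrt{N}$ then $N/\xi_N^2 \to \infty$, and since every $T_\pi \geq 0$ with at least one genus $1$ contribution strictly positive, the correction diverges to $+\infty$.

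The main obstacle is extracting the sharp constant $C_\pi$. The crude bound $T_\pi = O(N\xi_N^{\ell - 2g})$ is routine from the spanning-tree parameterization, but identifying the leading constant requires careful bookkeeping of the signs, orientations, and multiplicities of the $2g$ independent cycles in the face-adjacency dual, and expressing them as polytope volumes. Controlling the $o(1)$ error uniformly in $b_N$, especially in the slow-growth regime $b_N \ll \sqrt{N}$ where $b_N$ may grow arbitrarily slowly, likely requires replacing the heuristic Riemann-sum approximation with an explicit lattice-point count; this uniform control is the technical crux of the proof.
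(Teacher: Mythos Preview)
Your proposal is correct and follows essentially the same approach as the paper: Wick expansion over $\mcal{P}_2(2\ell)$, exact evaluation $T_\pi = N\xi_N^\ell$ for non-crossing $\pi$ via the tree structure of the face/cycle graph, and a genus decomposition of the remainder showing the leading term scales as $N/\xi_N^{2}$. The only cosmetic difference is your parameterization of the genus-one constant by tree-edge \emph{steps} (giving a polytope volume $C_\pi$ in $[-1,1]^{\ell-2}$) versus the paper's parameterization by vertex \emph{labels} (giving an integral $I_\ell^\pi$ over a cube of side $2(\ell-2)$); these are related by $C_\pi = I_\ell^\pi/2^{\ell-2}$ and your worked example $\pi=(1,3)(2,4)(5,6)\cdots$ with $C_\pi=1$ matches the paper's star-graph computation $I_{\pi_{2\ell}}=2^{\ell-2}$.
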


The numbers $m_{2\ell}(\sigma^2, c)$ correspond to sums of volumes of regions cut out of a hypercube and satisfy
\begin{equation}\label{eq:bounded_support}
  1 \leq \liminf_{\ell \to \infty} [m_{2\ell}(\sigma^2, c)]^{\frac{1}{2\ell}} \leq \limsup_{\ell \to \infty} [m_{2\ell}(\sigma^2, c)]^{\frac{1}{2\ell}} \leq 2\sigma.
\end{equation}
Thus, a solution to the signed moment problem defined by the sequence
\[
  0, 0, m_2(\sigma^2, c), 0, m_4(\sigma^2, c), \ldots
\]
would necessarily be unique; however, we do not prove existence. Nevertheless, given a finite limit for the infinitesimal distribution, we can consider the question of finite-rank perturbations.

\begin{thm}\label{thm:infinitesimal_freeness_band}
Let $\mbf{\Xi}_N$ be a periodically banded GUE matrix of band width $b_N$ such that $b_N \gg \sqrt{N}$ or $\lim_{N \to \infty} \frac{b_N}{N} = c \in (0, \infty)$. Then for any fixed $N_0$, the matrices $\mbf{\Xi}_N$, $(\mbf{E}_N^{(j, k)})_{j, k \in [N_0]}$, and $\frac{1}{N}\mbf{J}_N$ are asymptotically infinitesimally free.
\end{thm}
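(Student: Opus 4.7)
The plan is to adapt the combinatorial arguments underlying Theorem \ref{thm:infinitesimal_freeness_gaussian} and the author's zeroth-order asymptotic freeness for periodically banded GUE matrices in \cite{Au18}, while carefully tracking the $1/N$ corrections produced by the band constraint and by the deterministic perturbations. Write $\mcal{A}_0 = \op{alg}(\mbf{\Xi}_N)$, $\mcal{A}_1 = \op{alg}\{\mbf{E}_N^{(j,k)}: j,k \in [N_0]\}$, and $\mcal{A}_2 = \op{alg}\{\frac{1}{N}\mbf{J}_N\}$. Zeroth-order asymptotic freeness of $(\mcal{A}_0, \mcal{A}_1, \mcal{A}_2)$ in the stated regimes is already available: for $\mcal{A}_0$ against $\mcal{A}_1$ it follows by direct computation (traces localize to $[N_0]$ and the $\mbf{\Xi}_N$-factors average to zero away from diagonal index configurations), and for $\mcal{A}_0$ against $\mcal{A}_2$ it is a special case of the traffic-independence machinery of \cite{Au18}. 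What remains is to verify the Leibniz identity \ref{def:Leibniz} of Definition \ref{def:infinitesimal_free_probability} for mixed alternating traces, to the order of the $1/N$ correction.

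I would expand a generic mixed moment
\[
  \E\bigl[{\Tr}\bigl(\mbf{D}_0 \, \mbf{\Xi}_N^{p_1}\, \mbf{D}_1\, \mbf{\Xi}_N^{p_2}\, \mbf{D}_2 \cdots \mbf{\Xi}_N^{p_k}\, \mbf{D}_k\bigr)\bigr]
\]
with $\mbf{D}_j \in \mcal{A}_1 \cup \mcal{A}_2$, via Wick's formula applied to the Gaussian entries of $\mbf{X}_N$. The result is a sum indexed by pair partitions $\pi$ of the $\mbf{\Xi}_N$-positions together with compatible index sequences, each term carrying: (a) a band-weight factor $\prod_{\{e_1,e_2\}\in\pi} \indc{|i_{e_1} - j_{e_1}|_N \leq b_N}$, which passes through the same Riemann-sum analysis used to define $m_{2\ell}(\sigma^2,c)$ in Theorem \ref{thm:infinitesimal_band}; (b) a pinning factor from each $\mbf{E}_N^{(j,k)}$, which fixes two indices to lie in $[N_0]$; and (c) for each $\frac{1}{N}\mbf{J}_N$, a factor of $1/N$ together with a fully free pair of indices. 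The standard topological stratification then organizes pairings by their genus on the base cycle of the trace.

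The Leibniz rule is verified pairing-by-pairing. Planar pairings with no interaction between the $\mbf{\Xi}_N$-chord structure and the $\mbf{D}_j$-insertions give the leading-order freeness contribution (and vanish after centering the $\mbf{D}_j$'s, by assumption of consecutive distinctness of subalgebras). Non-planar pairings of $\mbf{\Xi}_N$, as well as planar pairings in which a $\mbf{D}_j$-insertion enforces a nontrivial index identification, each produce an $O(1/N)$ contribution. On the $\mcal{A}_0$ side these correspond to one-handle diagrams and reproduce the $\nu_{\mbf{\Xi}}$ contribution computed in Theorem \ref{thm:infinitesimal_band}; on the $\mcal{A}_1 \cup \mcal{A}_2$ side they reproduce precisely the terms $\varphi(a_1\cdots a_{j-1}\varphi'(a_j) a_{j+1}\cdots a_k)$ of the Leibniz identity, because pinning a single index via an $\mbf{E}_N^{(j,k)}$ or via the $1/N$-weight of $\frac{1}{N}\mbf{J}_N$ has exactly the same effect on the surrounding diagram as the factorization prescribed by the Leibniz expansion. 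This is the same matching argument as in Shlyakhtenko's proof of Theorem \ref{thm:infinitesimal_freeness_gaussian}, but with the sum over indices now weighted by the band-constraint factor.

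The main obstacle is checking that the band weight factorizes cleanly across the Leibniz expansion in the critical regime $b_N/N \to c$. Concretely, I must show that pinning $O(1)$ indices to lie in $[N_0]$ (from the $\mbf{E}_N^{(j,k)}$) or at a fixed relative offset (from the cycle structure induced by $\frac{1}{N}\mbf{J}_N$) perturbs the underlying Riemann sums for $m_{2\ell}(\sigma^2,c)$ only by lower-order terms. This reduces to an elementary but careful statement that removing a finite-codimension slab from a hypercube changes its volume by $o(1)$, and that the periodic distance $|\cdot|_N$ is translation-invariant so that pinning an absolute index is equivalent, to leading order, to pinning a relative one. Once this is in hand, the contribution of the band weight factorizes over the chord blocks produced in the Leibniz split, the combinatorics reduces to the mean-field case, and the theorem follows in both the $b_N \gg \sqrt{N}$ and $b_N/N \to c$ regimes simultaneously.
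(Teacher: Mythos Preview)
Your approach differs from the paper's in a structural way. You propose to verify the Leibniz identity of Definition~\ref{def:infinitesimal_free_probability} directly, term by term in a genus-stratified Wick expansion. The paper instead invokes a criterion of Shlyakhtenko (Proposition~\ref{prop:criteria_infinitesimal_freeness}): since the algebra $\mcal{F}_N$ generated by the matrix units and $\frac{1}{N}\mbf{J}_N$ lies in $\ker(\varphi)$, infinitesimal freeness of $\mcal{Z}_N$ from $\mcal{F}_N$ reduces to showing that alternating mixed traces $\E[\Tr(E_1 Z_1 \cdots E_r Z_r)]$ with $E_s \in \mcal{F}_N$ and $Z_s \in \interior{\mcal{Z}}_N$ vanish in the limit. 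This in turn follows once one shows (Lemmas~\ref{lem:mixed_trace_matrix_units}, \ref{lem:mixed_trace_all_ones}, \ref{lem:mixed_trace_matrix_units_and_all_ones}) that without centering the $Z_s$, the limit factorizes as $\prod_s \indc{k_s = j_s}\tau_{\mcal{Z}}(p_s)$. The paper carries this out via traffic test graphs and spanning-tree bounds rather than a genus expansion, and the argument in fact applies to general Wigner-type band matrices, not only the GUE case.

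What the paper's route buys is that one never has to match individual $1/N$ contributions against the terms on the right-hand side of the Leibniz rule; one only has to show the relevant unnormalized trace has a finite limit of a specific factored form. Your proposal, by contrast, asserts at its crucial step that ``pinning a single index \ldots\ has exactly the same effect on the surrounding diagram as the factorization prescribed by the Leibniz expansion,'' which is precisely the content to be established and is not obviously automatic: the Leibniz sum involves $\varphi(a_1 \cdots a_{j-1} a_{j+1} \cdots a_k)$, where removing $a_j$ may place two elements of the same subalgebra adjacent, and one must check that the resulting zeroth-order free moment is reproduced exactly by the $1/N$ diagrams you describe. This matching can be done, but your sketch does not engage with it. If you pursue your route, you should either make that matching explicit, or, more efficiently, observe that the criterion of Proposition~\ref{prop:criteria_infinitesimal_freeness} short-circuits the entire Leibniz verification once the factorization lemmas are in hand.
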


For band widths $b_N \gg \sqrt{N}$, this allows us to repeat the calculation of Corollary \ref{cor:outliers_gaussian}. In particular, we find outliers at the classical positions from the deformed Wigner ensemble.

\begin{cor}\label{cor:outliers_band}
For $b_N \gg \sqrt{N}$, the type $B$ distribution of the deformed RBM
\[
  \mbf{\Xi}_N + \sum_{j = 1}^{N_0} \theta_j \mbf{E}_N^{(j, j)} + \frac{\theta_{N_0 + 1}}{N}\mbf{J}_N
\]
is given by
\begin{gather*}
  \bigg(\frac{1}{2\pi\sigma^2} \sqrt{4\sigma^2 - t^2}\, dt, 0\bigg) \boxplus_B \bigg(\delta_0, \sum_{j = 1}^{N_0} \delta_{\theta_j} - N_0\delta_0\bigg) \boxplus_B \bigg(\delta_0, \delta_{\theta_{N_0 +1}} - \delta_0\bigg)\\
  = \bigg(\frac{1}{2\pi\sigma^2} \sqrt{4\sigma^2 - t^2}\, dt, \sum_{\substack{j \in [N_0 + 1]: \\ |\theta_j| \geq \sigma}} \delta_{\theta_j + \frac{\sigma^2}{\theta_j}} - \sum_{j \in [N_0 + 1]} \nu_j \bigg),
\end{gather*}
where $\nu_j$ is as in Corollary \ref{cor:outliers_gaussian}.
\end{cor}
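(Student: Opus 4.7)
The proof is a direct assembly of Theorems \ref{thm:infinitesimal_band}, \ref{thm:infinitesimal_freeness_band}, and \ref{thm:type_B_subordination}, mirroring Shlyakhtenko's derivation of Corollary \ref{cor:outliers_gaussian}. The first step is to identify the type $B$ distribution of each summand. For $b_N \gg \sqrt{N}$, Theorem \ref{thm:infinitesimal_band} asserts that $\mbf{\Xi}_N$ has type $B$ distribution $\bigl(\tfrac{1}{2\pi\sigma^2}\sqrt{4\sigma^2 - t^2}\,dt,\, 0\bigr)$, identical to the GUE. The ESD of $\sum_{j=1}^{N_0} \theta_j \mbf{E}_N^{(j,j)}$ converges to $\delta_0$ with first-order correction $\sum_{j=1}^{N_0} \delta_{\theta_j} - N_0\delta_0$, exactly as in Corollary \ref{cor:outliers_gaussian}. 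For the all-ones piece, $\tfrac{\theta_{N_0+1}}{N}\mbf{J}_N$ has a single nonzero eigenvalue equal to $\theta_{N_0+1}$ (since $\mbf{J}_N$ has spectrum $\{N, 0, \ldots, 0\}$), so its type $B$ distribution is $\bigl(\delta_0,\, \delta_{\theta_{N_0+1}} - \delta_0\bigr)$, which formally coincides with the type $B$ distribution of a rank-one matrix-unit perturbation of size $\theta_{N_0+1}$.

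Having identified the components, I would invoke Theorem \ref{thm:infinitesimal_freeness_band} to conclude that the three families are asymptotically infinitesimally free, so that Theorem \ref{thm:type_B_subordination} expresses the type $B$ distribution of their sum as the iterated type $B$ free convolution. Each perturbation has zeroth-order measure $\delta_0$, so the subordination functions of Theorem \ref{thm:subordination} collapse to $\omega_a(z) = z$ and $\omega_b(z) = F_{\mu_a}(z)$; hence $\omega_a'(z) = 1$, $\omega_b'(z) = F_{\mu_a}'(z)$, and the type $B$ subordination formula reduces to
\[
G_{\nu_{a+b}}(z) = G_{\nu_a}(z) + G_{\nu_b}\bigl(F_{\mu_a}(z)\bigr)\, F_{\mu_a}'(z).
\]
With $F_{\mu_a}(z) = \tfrac{1}{2}\bigl(z + \sqrt{z^2 - 4\sigma^2}\bigr)$ for the semicircle of variance $\sigma^2$, the pull-back of $\nu_b = \delta_\theta - \delta_0$ develops a simple pole precisely at the solution of $F_{\mu_a}(z) = \theta$, namely $z = \rho_\theta = \theta + \sigma^2/\theta$. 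This pole lies outside $[-2\sigma, 2\sigma]$ exactly when $|\theta| \geq \sigma$, in which case it contributes the Dirac mass $\delta_{\rho_\theta}$; the remaining smooth piece on the bulk is precisely $-\nu_\theta$ in the notation of Corollary \ref{cor:outliers_gaussian}. Iterating this one-variable computation over $j \in [N_0 + 1]$ yields the claimed formula, with $\nu_{\mbf{\Xi}} = 0$ ensuring that the final infinitesimal distribution records only the contributions of the perturbations.

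The main ``obstacle'' is not really an obstacle: the genuine analytic content has been packaged into Theorems \ref{thm:infinitesimal_band} (the sharp $\sqrt{N}$ transition) and \ref{thm:infinitesimal_freeness_band} (asymptotic infinitesimal freeness of $\mbf{\Xi}_N$ from the matrix units and $\tfrac{1}{N}\mbf{J}_N$). Given these inputs, the only point requiring care is the elementary observation that the delocalized perturbation $\tfrac{\theta}{N}\mbf{J}_N$ and the localized rank-one perturbation $\theta\mbf{E}_N^{(1,1)}$ have identical type $B$ distributions, so that Shlyakhtenko's GUE calculation carries over verbatim, consistent with the warning in Remark \ref{rem:perturbations} that the infinitesimal framework cannot distinguish the two despite their different fluctuation behavior.
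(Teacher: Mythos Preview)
Your proposal is correct and follows exactly the approach the paper indicates: the paper itself does not spell out a proof of Corollary \ref{cor:outliers_band}, stating only that ``the type $B$ free convolution calculations in Corollaries \ref{cor:outliers_wigner} and \ref{cor:outliers_band} essentially already appear in \cite[\S 4.1.1]{Shl18}, so we do not repeat them.'' Your assembly of Theorems \ref{thm:infinitesimal_band}, \ref{thm:infinitesimal_freeness_band}, and \ref{thm:type_B_subordination}, together with the explicit subordination computation, is precisely that deferred calculation.
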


\begin{rem}\label{rem:critical_rate}
A solution to the signed moment problem at the rate $b_N \asymp \sqrt{N}$ would allow us to deduce the type $B$ distribution of the corresponding deformed model: one simply needs to add the hypothetical signed measure to the infinitesimal distribution in Corollary \ref{cor:outliers_band}.
\end{rem}

In this article, we consider the BBP transition for random band matrices exclusively within the infinitesimal framework. Naturally, one can ask if the usual form of these results hold, namely, convergence in probability of the extremal eigenvalues and convergence in distribution of the fluctuations. This will be the subject of future work. In the next section, we record the outcome of numerical simulations for various band widths. Notably, the data suggests that the position of the outliers and their fluctuations extend below the rate $b_N \asymp \sqrt{N}$.

\section{Numerical simulations}\label{sec:numerical_simulations}
We consider the fluctuations of the largest eigenvalue under both localized and delocalized perturbations of our model separately. In particular, we record the data
\begin{align*}
  F_{N, 1}(b_n) &= \frac{\sqrt{\xi_N}}{\sigma\sqrt{\frac{\theta^2-\sigma^2}{\theta^2}}}\Big[\lambda_1(\mbf{\Xi}_N + \theta\mbf{E}_N^{(1, 1)}) - \Big(\theta + \frac{\sigma^2}{\theta}\Big)\Big]; \\
  F_{N, 2}(b_N) &= \frac{\sqrt{N}}{\sigma\sqrt{\frac{\theta^2-\sigma^2}{\theta^2}}}\Big[\lambda_1\Big(\mbf{\Xi}_N + \frac{\theta}{N}\mbf{J}_N\Big) - \Big(\theta + \frac{\sigma^2}{\theta}\Big)\Big]
\end{align*}
for 5000 realizations of the matrix $\mbf{\Xi}_N$, where $\sigma^2 = 1$, $\theta = 2$, and $N = 7776$. The peculiar choice of dimension allows for the precise band widths $b_N = N^{3/5} = 216$ and $b_N = N^{2/5} = 36$. For reference, we also consider the band width $b_N = \lfloor N/2 \rfloor$, in which case $\mbf{\Xi}_N$ reduces to the usual GUE and $F_{N,1}(\lfloor N/2 \rfloor), F_{N, 2}(\lfloor N/2 \rfloor) \dto \mcal{N}(0, 1)$ by a result of P\'{e}ch\'{e} \cite{Pec06}. We emphasize the difference in scaling between $F_{N, 1}(b_N)$ and $F_{N, 2}(b_N)$. Indeed, the data strongly suggests that we still have the convergence $F_{N,1}(b_N), F_{N, 2}(b_N) \dto \mcal{N}(0, 1)$ under the respective normalizations (even at the rate $b_N = N^{2/5} \ll \sqrt{N}$). The simulations were performed in Julia \cite{BEKS17} and the data plotted using Gadfly \cite{Jan18}.

\begin{figure}
\centering
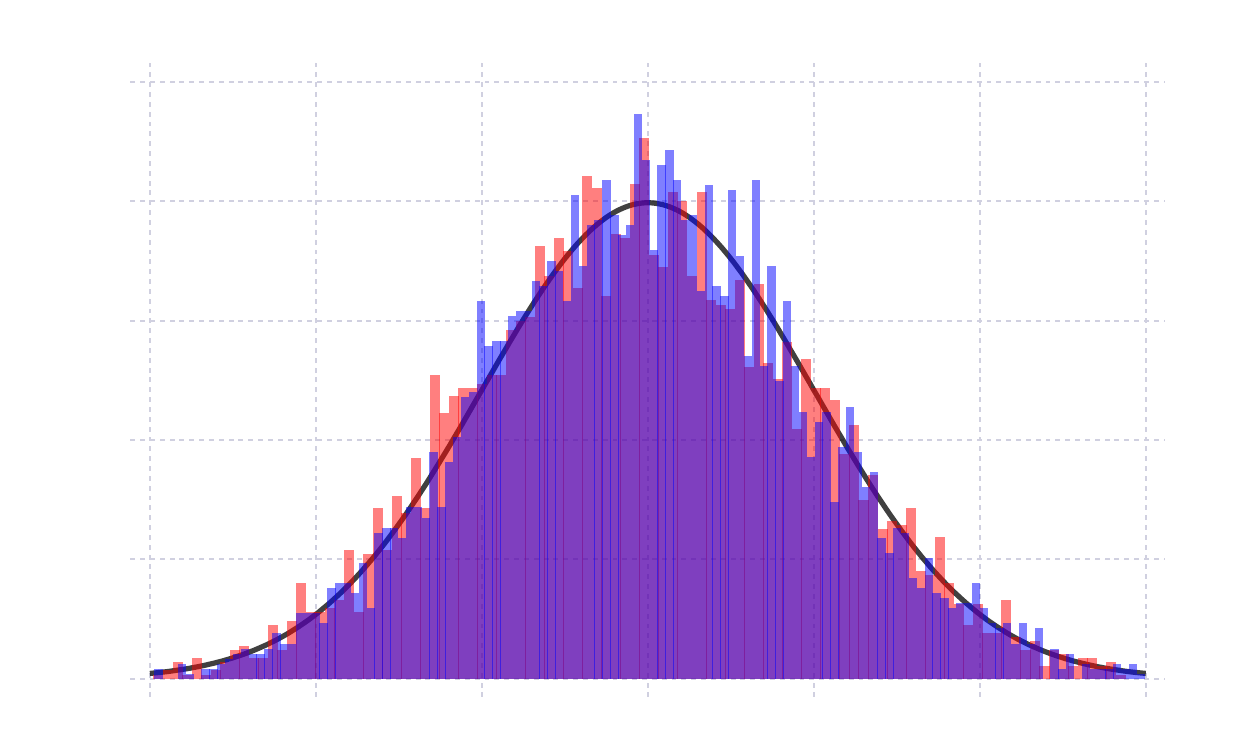
\caption{Histograms of the normalized eigenvalues $F_{N, 1}(b_N)$ overlaid for two values of $b_N$ and plotted against the conjectural limiting standard normal density. The overlapping region takes on the color blue $+$ red = purple.}
\label{fig:hist_localized}
\end{figure}

\begin{figure}
\centering
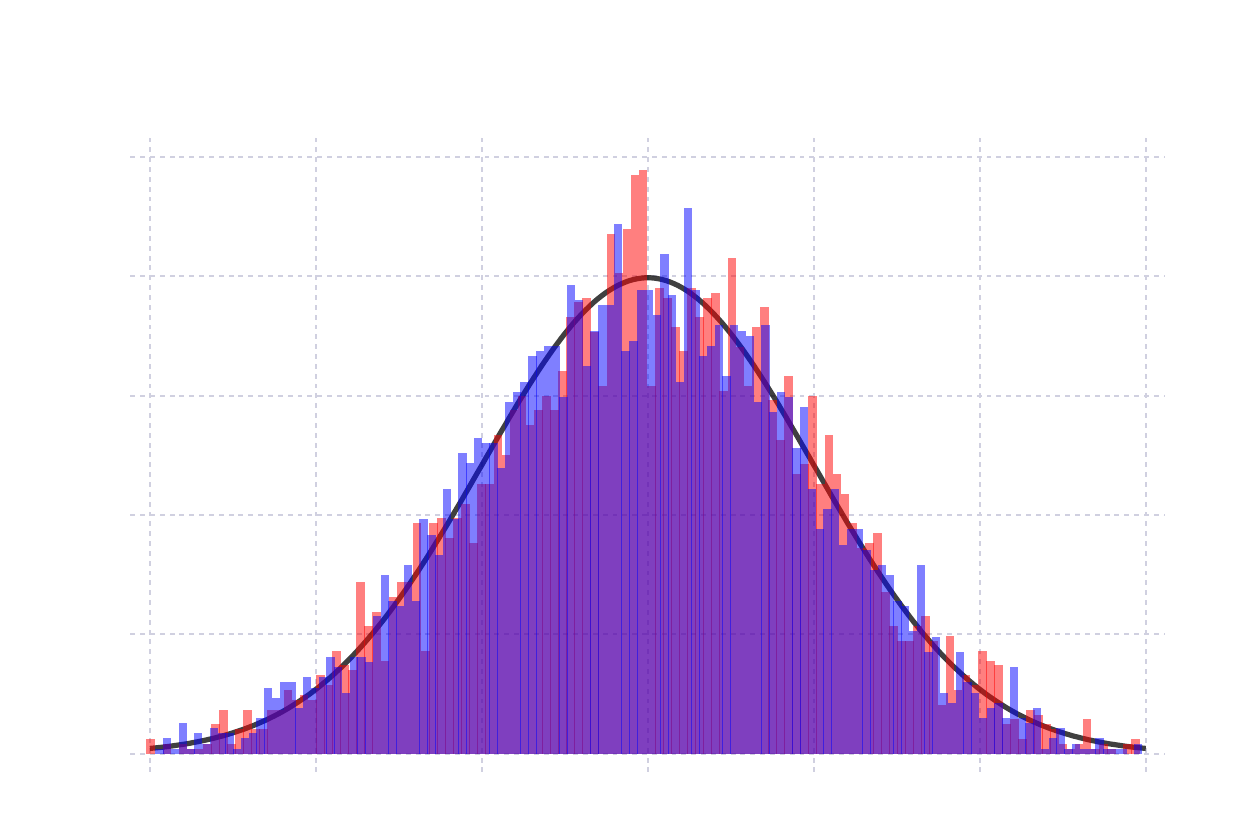  
\caption{Histograms of the normalized eigenvalues $F_{N, 2}(b_N)$ overlaid for two values of $b_N$ and plotted against the conjectural limiting standard normal density.}
\label{fig:hist_delocalized}
\end{figure}

\begin{figure}
\centering
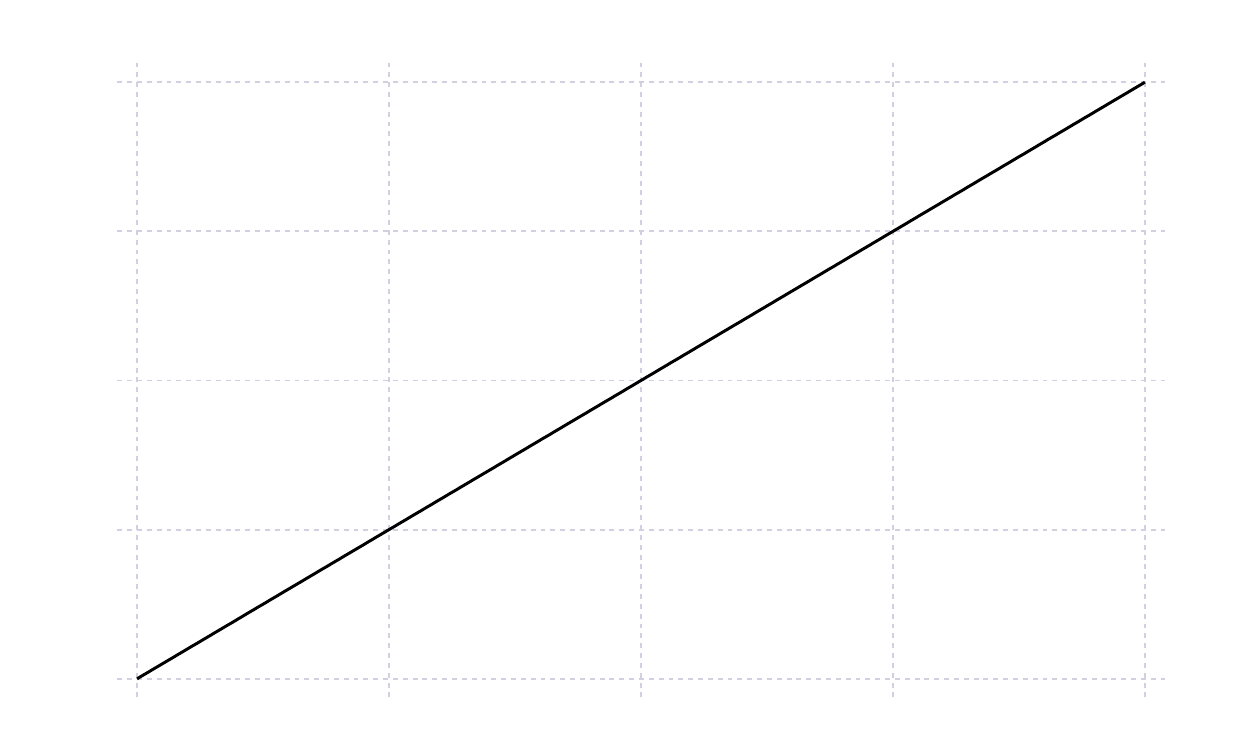
\caption{Quantile-quantile plots of the normalized eigenvalues $F_{N,1}(b_N)$ against the baseline data $F_{N, 1}(\lfloor N/2 \rfloor)$ from the deformed GUE overlaid for two values of $b_N$.}
\label{fig:qq_localized}
\end{figure}

\begin{figure}
\centering
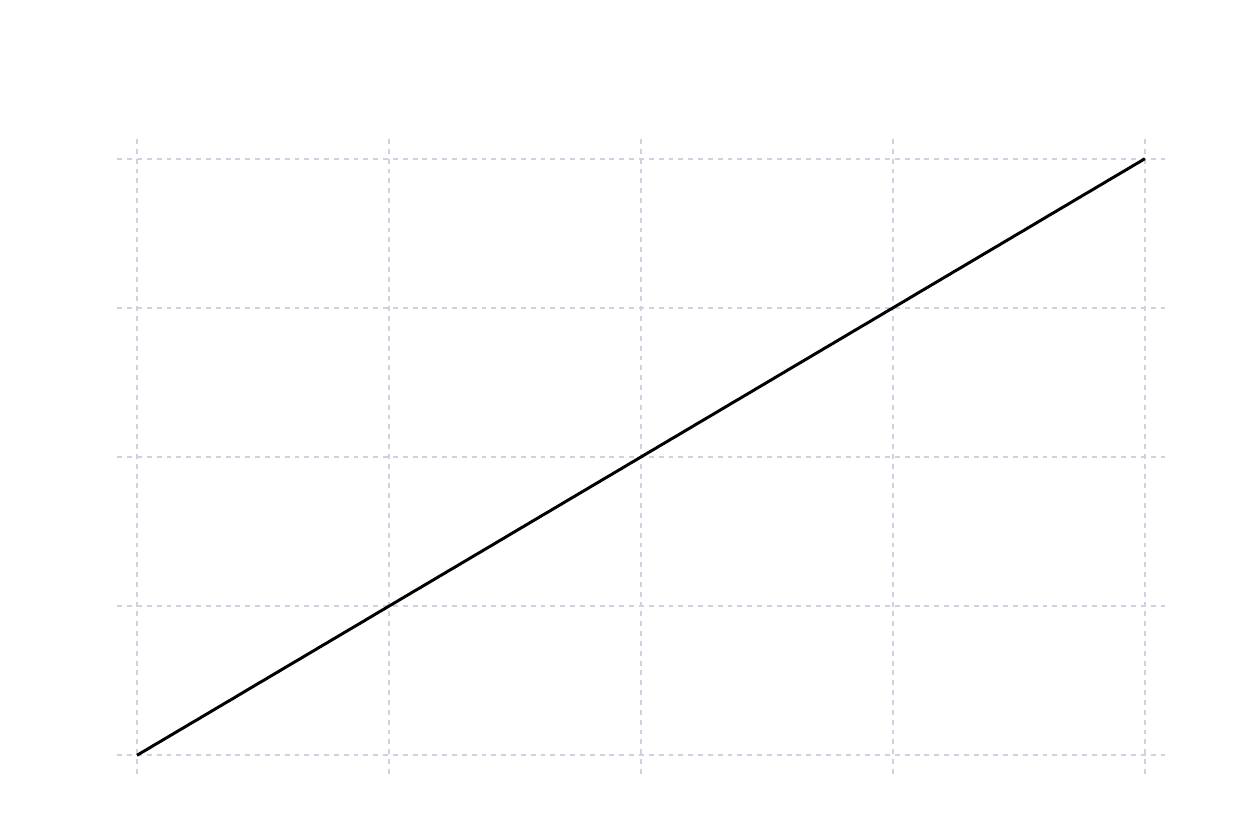
\caption{Quantile-quantile plots of the normalized eigenvalues $F_{N,2}(b_N)$ against the baseline data $F_{N, 2}(\lfloor N/2 \rfloor)$ from the deformed GUE overlaid for two values of $b_N$.}
\label{fig:qq_delocalized}
\end{figure}

The scaling in $F_{N, 1}$ should come as no surprise. To see this, note that the periodic band width structure in some sense reduces the trace expansion at each entry locally to that of a $\xi_N \times \xi_N$ matrix. So, heuristically, we think of $\theta\mbf{E}_N^{(1, 1)}$ as a perturbation of $\mbf{W}_{\xi_N} \deq \op{GUE}(\xi_N, \frac{\sigma^2}{\xi_N})$. On the other hand, in the case of $F_{N, 2}$, adding $\frac{\theta}{N}\mbf{J}_N$ forces us to consider the entire $N \times N$ matrix, removing any notion of homogeneity. Moreover, the entries of this perturbation come in at a different scale than our matrix entries $\mbf{\Xi}_N = \frac{1}{\sqrt{\xi_N}} \mbf{B}_N \circ \mbf{X}_N$. We can still make (non-rigorous) sense of the scaling in $F_{N, 2}$ by considering the trace expansion as a choice at each entry between the original matrix $\mbf{X}_N(i, j)$ and the perturbation $\frac{\theta}{N}$, where the first option is available iff $|i - j|_N \leq b_N$. But this precisely balances with the normalization of the entries in $\mbf{\Xi}_N$, and so the scaling should follow the usual case of $\mbf{W}_N + \frac{\theta}{N}\mbf{J}_N$.

For (undeformed) random band matrices, Sodin proved that the extremal eigenvalues converge to the edge of the support for band widths $b_N \gg \log(N)$ with the fluctuations exhibiting a crossover at the rate $b_N \asymp N^{5/6}$ \cite{Sod10}. The simulations do not support the idea of a similar crossover for the deformed model, suggesting that the perturbations regularize the fluctuations of the extremal eigenvalues.

\section{The infinitesimal distribution of a random band matrix}\label{sec:infinitesimal_distribution_rbm}

For convenience, we fix the variance $\sigma^2 = 1$ in this section: the general result follows from a simple scaling. Section \ref{sec:band_variant_genus} proves the existence of an infinitesimal distribution for a periodically banded GUE matrix in the regime $b_N = \Omega(\sqrt{N})$ using a band variant of the genus expansion. Section \ref{sec:finite_rank_perturbations} then proves the asymptotic infinitesimal freeness of our model from the matrix units and the normalized all-ones matrix, allowing us to carry out the advertised type $B$ free convolution calculation.

\subsection{A band variant of the genus expansion}\label{sec:band_variant_genus}

We consider traces in powers of our matrix $\mbf{\Xi}_N$. To begin, note that
\[
  \E[{\Tr}(\mbf{\Xi}_N^{2\ell-1})] = 0, \qquad \forall \ell \in \N.
\]
This follows from the usual symmetry argument, which still holds even in the presence of the band width condition. We turn our attention to the even powers, where we must now account for the band width explicitly:
\begin{align*}
  \E[{\Tr}(\mbf{\Xi}_N^{2\ell})] &= \sum_{\eta: [2\ell] \to [N]} \E\Big[\prod_{j = 1}^{2\ell} \mbf{\Xi}_N(\eta(j), \eta(j+1))\Big] \\
                               &= \xi_N^{-\ell} \sum_{\substack{\eta: [2\ell] \to [N] \text{ s.t.} \\ |\eta(j) - \eta(j+1)|_N \leq b_N}} \E\Big[\prod_{j = 1}^{2\ell} \mbf{X}_N(\eta(j), \eta(j+1))\Big],
\end{align*}
where $\eta(2\ell + 1) = \eta(1)$. Using Wick's formula, we obtain the expansion
\[
  \E[{\Tr}(\mbf{\Xi}_N^{2\ell})] = \xi_N^{-\ell} \sum_{\substack{\eta: [2\ell] \to [N] \text{ s.t.} \\ |\eta(j) - \eta(j+1)|_N \leq b_N}} \sum_{\pi \in \mcal{P}_2(2\ell)} \indc{\eta \circ \gamma \circ \pi = \eta},
\]
where $\gamma = (1, 2, \ldots, 2\ell) \in \mfk{S}_{2\ell}$. Here, we consider a pair partition $\pi$ as a $2\ell$-permutation when computing the composition $\eta \circ \gamma \circ \pi$. Interchanging the sums, we arrive at the expression
\begin{equation}\label{eq:genus_expansion_permutations}
  \E[{\Tr}(\mbf{\Xi}_N^{2\ell})] = \xi_N^{-\ell} \sum_{\pi \in \mcal{P}_2(2\ell)} Q(\ell, N, b_N, \pi),
\end{equation}
where
\begin{equation}\label{eq:admissible_labels}
  Q(\ell, N, b_N, \pi) = \#\mleft(\eta: [2\ell] \to [N] \mathrel{}\mathclose{}\middle|\mathopen{}\mathrel{} \begin{aligned} |\eta(j) - \eta(j+1)|_N \leq b_N \\
    \eta \text{ is constant on the cycles of $\gamma \circ \pi$}
\end{aligned}\mright).
\end{equation}
Note that we have the simple upper bound
\[
  Q(\ell, N, b_N, \pi) \leq N\xi_N^{\#(\gamma \circ \pi) - 1},
\]
where $\#(\gamma \circ \pi)$ denotes the number of cycles of $\gamma \circ \pi \in \mfk{S}_{2\ell}$. Indeed, starting with an arbitrary cycle of $\gamma \circ \pi$, say the cycle that contains 1, we have $N$ choices for the common index $\eta(1) \in [N]$ of the elements in this cycle. After making this choice, we must then choose the indices of the remaining cycles to satisfy the band width condition $|\eta(j) - \eta(j+1)|_N \leq b_N$, for which there are at most $\xi_N$ choices at each step. In general, this upper bound is strict: by the time you arrive to choose the index of a cycle of $\gamma \circ \pi$, you might have fewer than $\xi_N$ choices if the cycle is neighboring two cycles whose indices have already been chosen. As an example, take $\ell = 4$ and $\pi = (1, 5)(2, 8)(3, 7)(4, 6)$. In this case, $\gamma \circ \pi = (1, 6, 5, 2)(3, 8)(4, 7)$. Suppose that we pick the indices $\eta(1) = 1$ and $\eta(3) = 1+b_N$ for the cycles $(1, 6, 5, 2)$ and $(3, 10)$ respectively. Then the index $\eta(4)$ of the cycle $(4, 9)$ must satisfy both
\[
  |\eta(1) - \eta(4)|_N \leq b_N \quad \text{and} \quad |\eta(3) - \eta(4)|_N \leq b_N.
\]
If we assume that $b_N \ll N$, then we only have $1 + b_N < \xi_N$ choices for $\eta(4)$.

We quickly see the problem. By using up all of our leeway, we could potentially leave the indices too far apart to meet up again. For a simple parallel, consider placing three points $p_1, p_2, p_3$ in $\R^2$ such that any pair of points must be within unit distance of each other. Choosing the first point arbitrarily, say at the origin, and placing $p_2$ at $(1, 0)$, we can no longer place $p_3$ at an arbitrary point in the unit circle. This analogy gives us a lower bound for our original problem. If we instead divide our leeway by $\#(\gamma \circ \pi) - 1$ and pick the indices of the successive cycles arbitrarily at periodic distance less than or equal to this quotient, then we will stay within the permitted region (essentially just the triangle inequality). Thus, 
\begin{equation}\label{eq:bound_on_labels}
  N\bigg(\frac{\xi_N}{\#(\gamma \circ \pi) - 1}\bigg)^{\#(\gamma \circ \pi) - 1} \leq Q(\ell, N, b_N, \pi) \leq N\xi_N^{\#(\gamma \circ \pi) - 1}.
\end{equation}

We define a graph to keep track of the constraints on $\eta$ induced by cycles of $\gamma \circ \pi$ with adjacent elements $j, j+1$. Let $C_{2\ell}$ be the directed cycle graph on the vertices $V_{2\ell} = (v_j)_{j=1}^{2\ell}$ with edges $E_{2\ell} = (e_j)_{j=1}^{2\ell}$ in the direction $v_j \xrightarrow{e_j} v_{j+1}$. We equate the map $\eta: [2\ell] \to [N]$ with a labeling $\eta: V_{2\ell} \to [N]$ of the vertices in the obvious way. The edges then indicate the band width constraint by virtue of the equivalence
\[
  v_j \sim v_{j+1} \iff |\eta(v_j) - \eta(v_{j+1})|_N \leq b_N.
\]
At the moment, the direction of the edges do not play a role.

For a pair partition $\pi \in \mcal{P}_2(2\ell)$, we define $C_{2\ell}^\pi$ as the directed multigraph obtained from $C_{2\ell}$ by identifying the vertices $V_{2\ell}$ according to the blocks of $\pi$ as follows: if $(j < k)$ is a block of $\pi$, then we identify the source of the edge $e_j$ with the target of the edge $e_k$ (so $v_j \overset{\pi}{\sim} v_{k+1}$) and the source of the edge $e_k$ with the target of the edge $e_j$ (so $v_k \overset{\pi}{\sim} v_{j+1}$). In other words, for each block $(j < k) \in \pi$, we overlay the edges $e_j$ and $e_k$ head-to-tail. The vertices in the graph $C_{2\ell}^\pi$ correspond to the cycles of $\gamma \circ \pi$ with the edges indicating a constraint on the labels of the cycles induced by the constraint on the labels of the vertices. Note that the graph $C_{2\ell}^\pi$ might have loops. Of course, the constraint from a loop is vacuous, nor do the multiplicity/direction of the edges indicate any additional constraint at the level of the cycles of $\gamma \circ \pi$. So, we define $\underline{C}_{2\ell}^\pi$ as the underlying simple graph.

At the same time, we know that
\[
  \ell + 1 - 2\floor{\ell/2} \leq \#(\gamma \circ \pi) \leq \ell+1,
\]
where
\[
  \#(\gamma \circ \pi) = \ell + 1 \iff \pi \in \mcal{NC}_2(2\ell) \subset \mcal{P}_2(2\ell)
\]
by a result of Biane \cite{Bia97}. In particular, if $\pi \in \mcal{NC}_2(2\ell)$, then the graph $C_{2\ell}^\pi$ is a double tree in the sense of Male \cite{Mal11}. By this, we mean that $C_{2\ell}^\pi$ has no loops and $\underline{C}_{2\ell}^\pi$ is a tree such that the multiplicity of each edge in $C_{2\ell}^\pi$ is two (so-called \emph{twin edges}). To see this, note that the graph $\underline{C}_{2\ell}^\pi = (\underline{V}_{2\ell}^\pi, \underline{E}_{2\ell}^\pi)$ is connected with $\#(\underline{V}_{2\ell}^\pi) = \#(\gamma \circ \pi) = \ell + 1$, which implies that $\#(\underline{E}_{2\ell}^\pi) \geq \ell$. At the same time, $C_{2\ell}^\pi$ is obtained from $C_{2\ell}$ by overlaying pairs of edges, whence $\#(\underline{E}_{2\ell}^\pi) \leq \ell$. Thus,
\[
  \#(\underline{E}_{2\ell}^\pi) = \ell = \#(\underline{V}_{2\ell}^\pi) - 1, \qquad \forall \pi \in \mcal{NC}_2(2\ell),
\]
as was to be shown. In the case of a tree $\underline{C}_{2\ell}^\pi$, we do not run into a problem when choosing the vertices greedily using the entire leeway at each step, and so the upper bound for $Q(\ell, N, b_N, \pi)$ in \eqref{eq:bound_on_labels} becomes an equality. In other words,
\begin{equation}\label{eq:double_tree}
  \pi \in \mcal{NC}_2(2\ell) \implies Q(\ell, N, b_N, \pi) = N\xi_N^{\#(\gamma \circ \pi) -1} = N\xi_N^\ell.
\end{equation}
Indeed, recall that our earlier counterexample
\[
  \pi = (1, 5)(2, 8)(3, 7)(4, 6) \not\in \mcal{NC}_2(8).
\]

\begin{figure}
\centering
\begingroup%
  \makeatletter%
  \providecommand\color[2][]{%
    \errmessage{(Inkscape) Color is used for the text in Inkscape, but the package 'color.sty' is not loaded}%
    \renewcommand\color[2][]{}%
  }%
  \providecommand\transparent[1]{%
    \errmessage{(Inkscape) Transparency is used (non-zero) for the text in Inkscape, but the package 'transparent.sty' is not loaded}%
    \renewcommand\transparent[1]{}%
  }%
  \providecommand\rotatebox[2]{#2}%
  \newcommand*\fsize{\dimexpr\f@size pt\relax}%
  \newcommand*\lineheight[1]{\fontsize{\fsize}{#1\fsize}\selectfont}%
  \ifx\svgwidth\undefined%
    \setlength{\unitlength}{360bp}%
    \ifx\svgscale\undefined%
      \relax%
    \else%
      \setlength{\unitlength}{\unitlength * \real{\svgscale}}%
    \fi%
  \else%
    \setlength{\unitlength}{\svgwidth}%
  \fi%
  \global\let\svgwidth\undefined%
  \global\let\svgscale\undefined%
  \makeatother%
  \begin{picture}(1,0.25)%
    \lineheight{1}%
    \setlength\tabcolsep{0pt}%
    \put(0.86855593,0.00786136){\color[rgb]{0,0,0}\makebox(0,0)[lt]{\lineheight{1.25}\smash{\begin{tabular}[t]{l}$\underline{C}_{2\ell}^\pi$\end{tabular}}}}%
    \put(0,0){\includegraphics[width=\unitlength,page=1]{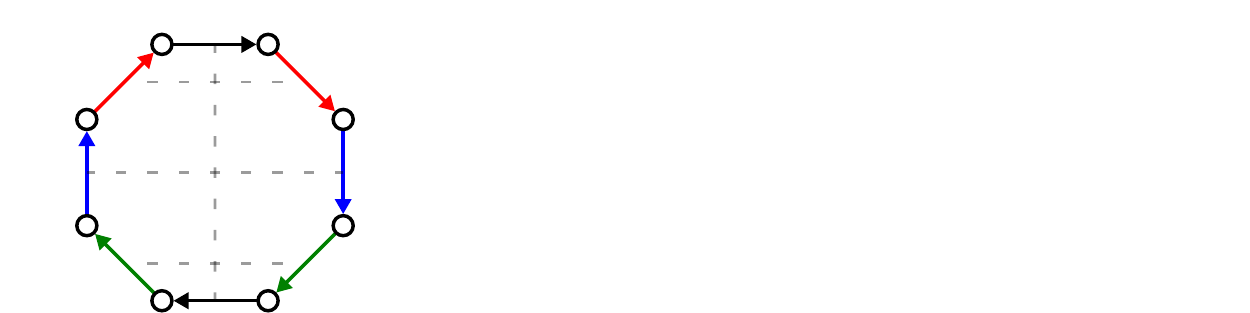}}%
    \put(0.15913481,0.23209516){\color[rgb]{0,0,0}\makebox(0,0)[lt]{\lineheight{1.25}\smash{\begin{tabular}[t]{l}$e_1$\end{tabular}}}}%
    \put(0,0){\includegraphics[width=\unitlength,page=2]{fig5_genus_graph.pdf}}%
    \put(0.54056129,0.00786136){\color[rgb]{0,0,0}\makebox(0,0)[lt]{\lineheight{1.25}\smash{\begin{tabular}[t]{l}$C_{2\ell}^\pi$\end{tabular}}}}%
    \put(0.61582773,0.16091759){\color[rgb]{0,0,0}\makebox(0,0)[lt]{\lineheight{1.25}\smash{\begin{tabular}[t]{l}$e_1$\end{tabular}}}}%
    \put(0.38406522,0.10275349){\color[rgb]{0,0,0}\makebox(0,0)[lt]{\lineheight{1.25}\smash{\begin{tabular}[t]{l}$\mapsto$\end{tabular}}}}%
    \put(0.71205734,0.10275253){\color[rgb]{0,0,0}\makebox(0,0)[lt]{\lineheight{1.25}\smash{\begin{tabular}[t]{l}$\mapsto$\end{tabular}}}}%
  \end{picture}%
\endgroup%

\caption{An example of the construction of $\underline{C}_{2\ell}^\pi$ for $\pi = (1, 5)(2, 8)(3, 7)(4, 6)$.}
\label{fig:genus_graph}
\end{figure}

Let $\mcal{C}_2(2\ell) = \mcal{P}_2(2\ell)\setminus \mcal{NC}_2(2\ell)$. Applying \eqref{eq:double_tree} to our earlier \eqref{eq:genus_expansion_permutations} and rearranging, we obtain
\[
  \E[{\Tr}(\mbf{\Xi}_N^{2\ell})] - N\cdot{\Cat}(\ell) = \xi_N^{-\ell} \sum_{\pi \in \mcal{C}_2(2\ell)} Q(\ell, N, b_N, \pi).
\]
Using our bounds \eqref{eq:bound_on_labels} for $Q(\ell, N, b_N, \pi)$, we see that
\begin{align*}
  \frac{N}{\xi_N^{\ell}} \sum_{\pi \in \mcal{C}_2(2\ell)} \bigg(\frac{\xi_N}{\#(\gamma \circ \pi) - 1}\bigg)^{\#(\gamma \circ \pi) - 1} &\leq \xi_N^{-\ell} \sum_{\pi \in \mcal{C}_2(2\ell)} Q(\ell, N, b_N, \pi) \\
                                                                                                                   &\leq N \sum_{\pi \in \mcal{C}_2(2\ell)} \xi_N^{\#(\gamma \circ \pi) - \ell - 1}.
\end{align*}
Since $\#(\gamma \circ \pi) \leq \ell + 1$, we also have the lower bound
\[
  \frac{N}{\ell^\ell} \sum_{\pi \in \mcal{C}_2(2\ell)} \xi_N^{\#(\gamma \circ \pi) - \ell - 1} \leq \frac{N}{\xi_N^{\ell}} \sum_{\pi \in \mcal{C}_2(2\ell)} \bigg(\frac{\xi_N}{\#(\gamma \circ \pi) - 1}\bigg)^{\#(\gamma \circ \pi) - 1}.
\]
At this point, we use the genus expansion to count the number of pair partitions $\pi$ that contribute to a given exponent $\#(\gamma \circ \pi) - \ell - 1$ appearing in the summands of our bounds. Altogether, this allows us to write
\begin{equation}\label{eq:band_genus}
  \frac{N}{\ell^\ell} \sum_{g = 1}^{\floor{\ell/2}} \frac{\varepsilon_g(\ell)}{\xi_N^{2g}} \leq \E[{\Tr}(\mbf{\Xi}_N^{2\ell})] - N\cdot{\Cat}(\ell) \leq N \sum_{g = 1}^{\floor{\ell/2}} \frac{\varepsilon_g(\ell)}{\xi_N^{2g}},
\end{equation}
where $\varepsilon_g(\ell) = \#(\pi \in \mcal{P}_{2\ell} : \#(\gamma \circ \pi) - \ell - 1 = -2g)$. Naturally, this calculation recovers the semicircle law. To see this, we simply take the normalized limit
\[
  \lim_{N \to \infty} \frac{1}{N} \bigg(\E[{\Tr}(\mbf{\Xi}_N^{2\ell})] - N\cdot{\Cat}(\ell) \bigg) = \lim_{N \to \infty} \sum_{g = 1}^{\floor{\ell/2}} \frac{\varepsilon_g(\ell)}{\xi_N^{2g}} = 0;
\]
however, without this normalization, the ratio $\frac{N}{\xi_N^2}$ arises in \eqref{eq:band_genus} as the leading order term. In particular, if $b_N \gg \sqrt{N}$, then $\frac{N}{\xi_N^2} = o(1)$ and
\[
  \lim_{N \to \infty} \E[{\Tr}(\mbf{\Xi}_N^{2\ell})] - N\cdot{\Cat}(\ell) = 0.
\]
In this case, the infinitesimal distribution of a periodically banded GUE matrix is null, which matches the calculation for the usual GUE. On the other hand, if $b_N \ll \sqrt{N}$, then the lower bound in \eqref{eq:band_genus} implies that
\[
  \lim_{N \to \infty} \E[{\Tr}(\mbf{\Xi}_N^{2\ell})] - N\cdot{\Cat}(\ell) = \infty.
\]
Finally, in the intermediate regime $\lim_{N \to \infty} \frac{b_N}{\sqrt{N}} = c \in (0, \infty)$, we see that 
\begin{equation}\label{eq:limits}
\begin{aligned}
  \frac{\varepsilon_1(\ell)}{4c^2\ell^\ell} &\leq \liminf_{N \to \infty} \E[{\Tr}(\mbf{\Xi}_N^{2\ell})] - N\cdot{\Cat}(\ell) \\
  &\leq \limsup_{N \to \infty} \E[{\Tr}(\mbf{\Xi}_N^{2\ell})] - N\cdot{\Cat}(\ell) \leq \frac{\varepsilon_1(\ell)}{4c^2},
\end{aligned}
\end{equation}               
where $\varepsilon_1(0) = \varepsilon_1(1) = 0$ and
\begin{equation}\label{eq:genus_one_asymptotic}
  \varepsilon_1(\ell) = \frac{(2\ell-1)!}{6(\ell-2)!(\ell-1)!} \sim \frac{1}{2}\sqrt{\frac{\ell}{\pi}}(\ell-1)4^\ell,
\end{equation}
the equality (\emph{resp.,} asymptotic) following from the three-term recurrence of Harer and Zagier \cite{HZ86} (\emph{resp.,} Stirling's formula).

Of course, one expects to be able to say more than just \eqref{eq:limits}, namely, that a limit exists in the intermediate regime (and hopefully with some nice formula or interpretation). This amounts to calculating
\[
  \lim_{N \to \infty} \frac{Q(\ell, N, b_N, \pi)}{\xi_N^\ell}
\]
for $\pi \in \mcal{P}_2(2\ell)$ such that $g(\pi) = 1$; however, the value of this limit crucially depends on the particular geometry of $\pi$. For example, consider the pair partitions
\begin{align*}
  \pi_1 &= (1, 3)(2, 4)(5, 6)(7, 8)(9, 10);\\
  \pi_2 &= (1, 6)(2, 10)(3, 9)(4, 8)(5, 7).
\end{align*}
Then $\#(\gamma \circ \pi_1) = \#(\gamma \circ \pi_2) = 4$, and so $g(\pi_1) = g(\pi_2) = 1$. Going through the graph construction, we see that $\underline{C}_{10}^{\pi_1}$ is a tree, which means that $Q(5, N, b_N, \pi_1) = N\xi_N^3$ attains the upper bound. On the other hand, $\underline{C}_{10}^{\pi_2}$ is the undirected cycle graph $\underline{C}_4$, which means that $Q(5, N, b_N, \pi_2) < tN\xi_N^3$ for some $t \in (0, 1)$.

In fact, the graph $\underline{C}_{2\ell}^\pi = (\underline{V}_{2\ell}^\pi, \underline{E}_{2\ell}^\pi)$ contains precisely the information we need to compute the limit. To see this, note that we can rewrite \eqref{eq:admissible_labels} as
\[
  Q(\ell, N, b_N, \pi) = \#(\eta: \underline{V}_{2\ell}^\pi \to [N] : |\eta(v) - \eta(w)|_N \leq b_N \text{ for every } \{v, w\} \in \underline{E}_{2\ell}^\pi).
\]
Indeed, the vertices of $\underline{C}_{2\ell}^\pi$ correspond to the cycles of $\gamma \circ \pi$ by construction and the edges function precisely to keep track of the band width constraint. This suggests computing the limit of $Q(\ell, N, b_N, \pi)$ as an integral over the hypercube $[0, 1]^{\#(V)}$ after scaling by $N^{\#(V)}$ (for example, as in \cite{Au18}); however, in this case, the band width $b_N \asymp \sqrt{N} \ll N$ and $\#(V) = \ell - 1$ (recall that $g(\pi) = 1$). So, the integral interpretation must take into account the vanishing scale of the mesh size $b_N$ and the difference in the scaling exponent $\#(V) \neq \ell$. To accomplish this, we use the fact that the periodic band width structure implies a certain homogeneity in our choice of admissible maps $\eta$. In particular, fixing a vertex $v_0 \in \underline{V}_{2\ell}^\pi$, we see that\small
\begin{equation}\label{eq:admissible_maps_fixed}
  \frac{Q(\ell, N, b_N, \pi)}{N} = \#\mleft(\eta: \underline{V}_{2\ell}^\pi \to [N] \mathrel{}\mathclose{}\middle|\mathopen{}\mathrel{} \begin{aligned} \eta(v_0) = 1 \\
    |\eta(v) - \eta(w)|_N \leq b_N, \quad \forall \{v, w\} \in \underline{E}_{2\ell}^\pi\end{aligned}\mright).
\end{equation}\normalsize
So, we consider the equivalent expression
\[
  \frac{Q(\ell, N, b_N, \pi)}{\xi_N^\ell} = \frac{N}{\xi_N^2}\frac{Q(\ell, N, b_N, \pi)}{N\xi_N^{\ell-2}},
\]
where
\[
  \lim_{N \to \infty} \frac{N}{\xi_N^2} = \lim_{N \to \infty} \frac{N}{(2b_N+1)^2} = \frac{1}{4c^2}. 
\]
This reduces the problem to computing
\[
  \lim_{N \to \infty} \frac{Q(\ell, N, b_N, \pi)}{N\xi_N^{\ell-2}} = \frac{1}{2^{\ell-2}}\lim_{N \to \infty} \frac{Q(\ell, N, b_N, \pi)}{Nb_N^{\ell-2}},
\]
which we can interpret using \eqref{eq:admissible_maps_fixed}. After fixing the label $\eta(v_0) = 1$, we must choose the labels of the remaining $\ell - 2$ vertices according to the band width constraint. The exponent of the normalization $b_N^{\ell-2}$ now matches the remaining degrees of freedom, and the base matches the maximum step size $|\eta(v) - \eta(w)|_N \leq b_N$.

The remaining issue concerns the region of integration. After the normalization, the step size $|\eta(v) - \eta(w)|_N \leq b_N$ becomes a single unit length. To ensure that the integral captures the full range of possibilities, we must choose a hypercube of appropriate side length. If $\eta(v_0) = 1$, then the image $\eta(V)$ will necessarily be disjoint from $[1+(\ell-2)b_N, N-(\ell-2)b_N]$. So, the hypercube $[0, 2(\ell-2)]^{\ell-2}$ will ensure that the region of integration is sufficiently large.

We can now give the integral representation for our limit. For $\ell \geq 2$, we define an integral $I_\ell^\pi$ associated to the graph $\underline{C}_{2\ell}^\pi = (\underline{V}_{2\ell}^\pi, \underline{E}_{2\ell}^\pi)$ as follows. Pick an arbitrary vertex $v_0 \in \underline{V}_{2\ell}^\pi$, and let $E_0 \subset \underline{E}_{2\ell}^\pi$ be the set of edges adjacent to $v_0$. We write $E_1 = \underline{E}_{2\ell}^\pi\setminus E_0$ for the remaining edges. By construction, the integral \small
\[
  I_\ell^\pi = \int_{[0, 2(\ell-2)]^{\ell-2}}  \prod_{\{v, v_0\} \in E_0} \indc{|t_v|_{2(\ell-2)} \leq 1} \prod_{\{v, w\} \in E_1} \indc{|t_v - t_w|_{2(\ell-2)} \leq 1} \prod_{v \in V \setminus \{v_0\}} dt_v
\]\normalsize
then satisfies
\[
  \lim_{N \to \infty} \frac{Q(\ell, N, b_N, \pi)}{\xi^{\ell}} = \frac{I_\ell^\pi}{2^\ell c^2},
\]
where
\[
  |\cdot|_{2(\ell-2)} = \min(|\cdot|, 2(\ell-2) - |\cdot|).
\]
For $\ell = 2$, we set $[0,0]^0 = \{0\}$ by convention, in which case the integral reduces to $I_2^\pi = 1$ for the only genus one partition $\pi = (1,3)(2,4) \in \mcal{P}_2(4)$. As an example, consider the case of $\ell = 4$, $\pi = (1, 5)(2, 8)(3, 7)(4, 6)$, and $b_N = \sqrt{N}$. Then
\[
  \frac{I_\ell^\pi}{2^\ell c^2} = \frac{1}{16}\int_{[0, 4]^2} \indc{|t|_4 \leq 1}\indc{|s|_4 \leq 1}\indc{|t-s|_4 \leq 1} \ dt\, ds= \frac{3}{16}.   
\]
One can easily verify that this agrees with the direct calculation
\[
  \lim_{N \to \infty} \frac{Q(\ell, N, b_N, \pi)}{\xi^\ell} = \lim_{N \to \infty} \frac{N(2b_N + 1 + 2\sum_{j = b_N + 1}^{2b_N} j)}{(2b_N+1)^4} = \frac{3}{16}.
\]
Thus, for $\lim_{N \to \infty} \frac{b_N}{\sqrt{N}} = c \in (0, \infty)$, we see that
\[
  m_{2\ell}(1, c) = \lim_{N \to \infty} \E[{\Tr}(\mbf{\Xi}_N^{2\ell})] - N\cdot{\Cat}(\ell) = \frac{1}{2^\ell c^2}\sum_{\substack{\pi \in \mcal{P}_2(2\ell):\\ g(\pi) = 1}} I_\pi^\ell.
\]
Altogether, this proves Theorem \ref{thm:infinitesimal_band}.

We use our earlier bound \eqref{eq:limits} and the asymptotic \eqref{eq:genus_one_asymptotic} for $\varepsilon_1(\ell)$ to see that
\[
  \limsup_{\ell \to \infty} [m_{2\ell}(1,c)]^{\frac{1}{2\ell}} \leq \limsup_{\ell \to \infty} \bigg[\frac{\varepsilon_1(\ell)}{4c^2}\bigg]^{\frac{1}{2\ell}} = \limsup_{\ell \to \infty} \bigg[\frac{\frac{1}{2}\sqrt{\frac{\ell}{\pi}}(\ell-1)4^\ell}{4c^2}\bigg]^{\frac{1}{2\ell}} = 2.
\]
At the same time,
\[
  \frac{1}{2^\ell c^2}\sum_{\substack{\pi \in \mcal{P}_2(2\ell):\\ g(\pi) = 1}} I_\pi^\ell \geq \frac{1}{2^\ell c^2}I_{\pi_{2\ell}},
\]
where $\pi_{2\ell} = (1, 3)(2, 4)(5, 6)(7,8)\cdots(2\ell - 1, 2\ell)$. Note that $\pi_{2\ell}$ is ``one-crossing''. In particular, $g(\pi) = 1$ since $\gamma \circ \pi = (1, 4, 3, 2, 5, 7, 9, \ldots, 2\ell - 1)(6)(8)\cdots(2\ell)$. Moreover, the corresponding graph $\underline{C}_{2\ell}^{\pi_{2\ell}}$ is the star graph with $\ell - 1$ vertices. Since $\underline{C}_{2\ell}^{\pi_{2\ell}}$ is a tree, we know that $I_{\pi_{2\ell}} = 2^{\ell-2}$, whence
\[
  \liminf_{\ell \to \infty} [m_{2\ell}(1,c)]^{\frac{1}{2\ell}} \geq \liminf_{\ell \to \infty} \bigg[\frac{1}{4c^2}\bigg]^{\frac{1}{2\ell}} = 1,
\]
which proves \eqref{eq:bounded_support}. The hypothetical signed measure $\nu_c$ associated to the infinitesimal distribution in the intermediate regime $\lim_{N \to \infty} \frac{b_N}{\sqrt{N}} = c \in (0, \infty)$ then satisfies
\[
  [-1+\varepsilon, 1 - \varepsilon] \not\subset \op{supp}(\nu_c) \subset [-2, 2].
\]
In fact, we expect that $\lim_{\ell \to \infty} [m_{2\ell}(1,c)]^{\frac{1}{2\ell}} = 2$, but we do not prove this here.

\begin{rem}\label{rem:joint_infinitesimal_distribution}
Naturally, one can ask about the joint infinitesimal distribution of independent periodically banded GUE matrices $(\mbf{\Xi}_N^{(i)})_{i \in I}$. To answer this question, we partition the index set $I = I_1 \sqcup I_2$ according to the rates $b_N^{(i)} \to \infty$, where
\[
  I_1 = \{i \in I \mid b_N^{(i)} \gg \sqrt{N}\} \quad \text{and} \quad I_2 = \Big\{i \in I \mathrel{\Big|} \lim_{N \to \infty} \frac{b_N^{(i)}}{\sqrt{N}} \in (0, \infty)\Big\}.
\]
Repeating our banded genus expansion for a mixed trace in $(\mbf{\Xi}_N^{(i)})_{i \in I_1}$ shows that the joint infinitesimal distribution of $(\mbf{\Xi}_N^{(i)})_{i \in I_1}$ is null, which implies that the $(\mbf{\Xi}_N^{(i)})_{i \in I_1}$ are asymptotically infinitesimally free. Similarly, any mixed trace in the two families $(\mbf{\Xi}_N^{(i)})_{i \in I_1}$ and $(\mbf{\Xi}_N^{(i)})_{i \in I_2}$ vanishes in the limit, which implies that $(\mbf{\Xi}_N^{(i)})_{i \in I_1}$ and $(\mbf{\Xi}_N^{(i)})_{i \in I_2}$ are asymptotically infinitesimally free as well.

On the other hand, the same calculation shows that the $(\mbf{\Xi}_N^{(i)})_{i \in I_2}$ are \emph{not} asymptotically infinitesimally free. For example, if $b_N^{(1)}, b_N^{(2)} = \sqrt{N}$, then
\[
  \lim_{N \to \infty} \E[{\Tr}(\mbf{\Xi}_N^{(1)}\mbf{\Xi}_N^{(2)}\mbf{\Xi}_N^{(1)}\mbf{\Xi}_N^{(2)})] = \frac{1}{4},
\]
whereas asymptotic infinitesimal freeness would insist that this limit be zero. Note that our integral interpretation still holds and prescribes a rule for computing the infinitesimal distribution in this case. To account for the possibly different ratios $\lim_{N \to \infty} \frac{b_N^{(i)}}{\sqrt{N}} = c_i$, we must adjust both the integrand and the region of integration via a straightforward combination of the ideas above and \cite[$\S$4.3]{Au18}. We leave the details to the interested reader.

Note that the infinitesimal calculation is very specific (GUE and periodically banded) and cannot be extended to regular band matrices
\[
\mbf{B}_N(i, j) = \indc{|i-j| \leq b_N}.
\]
In particular, let $\mbf{\Xi}_N$ now denote the banded GUE matrix constructed with $\mbf{B}_N$ as above. For $1 \ll b_N \ll N$, we know that $\mu(\mbf{\Xi}_N)$ still converges weakly almost surely to the semicircle distribution \cite{BMP91}. A simple calculation shows that
\[
\E[{\Tr}(\mbf{\Xi}_N^2)] = N - \frac{b_N(b_N + 1)}{2b_N + 1}.
\]
In this case,
\[
\lim_{N \to \infty} \E[{\Tr}(\mbf{\Xi}_N^2)] - N\cdot{\Cat}(1) = \lim_{N \to \infty} - \frac{b_N(b_N + 1)}{2b_N + 1} = -\infty,
\]
and so an infinitesimal distribution does not exist for any such band width.
\end{rem}

\subsection{Finite-rank perturbations}\label{sec:finite_rank_perturbations}
We now consider the multi-matrix model
\[
  \mcal{Z}_N = (\mbf{\Xi}_N^{(i)})_{i \in I}; \qquad
  \mcal{E}_N = (\mbf{E}_N^{(j, k)})_{1 \leq j, k \leq N_0}; \qquad
  \mbf{K}_N = \frac{1}{N}\mbf{J}_N. 
\]
Most of our calculations in this section remain valid in a more general setting. In particular, we extend our definition of
\[
  \mbf{\Xi}_N^{(i)} = \frac{1}{\sqrt{\xi_N^{(i)}}}\mbf{B}_N^{(i)} \circ \mbf{X}_N^{(i)}
\]
to independent unnormalized Wigner matrices $(\mbf{X}_N^{(i)})_{i \in I}$ of the form
\begin{equation}\label{eq:generalized_wigner}
\begin{aligned}
  \E[\mbf{X}_N^{(i)}(j, k)] = 0 \quad \text{and} \quad \E\mathopen{}\big[|\mbf{X}_N^{(i)}(j, k)|^2\big]\mathclose{} = 1, \qquad \forall j < k \in [N];\\
  \sup_{N \in \N} \sup_{i \in I_0} \sup_{j \leq k \in [N]} \E\mathopen{}\big[|\mbf{X}_N^{(i)}(j, k)|^\ell\big]\mathclose{} < \infty, \qquad \forall I_0 \subset I: \#(I_0) < \infty,
\end{aligned}
\end{equation}
where the band widths $(b_N^{(i)})_{i \in I}$ satisfy
\begin{equation}\label{eq:band_width_divergence}
  \lim_{N \to \infty} b_N^{(i)} = \infty, \qquad \forall i \in I.
\end{equation}
Dykema proved that the family $\mcal{W}_N = (\frac{1}{\sqrt{N}}\mbf{X}_N^{(i)})_{i \in I}$ converges in distribution to a semicircular system \cite{Dyk93}. We generalized this result to the family $\mcal{Z}_N$ in \cite{Au18} (recall that if $b_N^{(i)} \geq \lfloor N/2 \rfloor$, then $\mbf{\Xi}_N^{(i)} = \frac{1}{\sqrt{N}}\mbf{X}_N^{(i)}$). For concreteness, we write $(\C\langle \mbf{x} \rangle, \tau_{\mcal{Z}})$ for this limiting distribution, where
\[
  \tau_{\mcal{Z}}(p) = \lim_{N \to \infty} \frac{1}{N}\E[{\Tr}(p(\mcal{Z}_N))], \qquad \forall p \in \C\langle \mbf{x} \rangle.
\]

The results in \cite{Au18} as well as the remainder of this section make use of the traffic probability framework \cite{Mal11}, which we briefly review.

\begin{defn}[Traffic probability]\label{defn:traffic_probability}
By a \emph{multidigraph} $G = (V, E, \source, \target)$, we mean a non-empty set of vertices $V$, a set of edges $E$, and a pair of functions $\source, \target: E \to V$ specifying the source and target of each edge. A \emph{test graph} $T = (G, \gamma)$ is a finite multidigraph $G$ with edge labels $\gamma: E \to I$. For a partition $\pi \in \mcal{P}(V)$, we define $T^\pi = (G^\pi, \gamma^\pi)$ as the test graph obtained from $T$ by identifying the vertices of $T$ according to blocks of $\pi$. Formally, we construct $G^\pi = (V^\pi, E^\pi, \source^\pi, \target^\pi)$ as
\begin{enumerate}[label=(\roman*)]
\item $V^\pi = V/\mathord\sim_\pi$ and $E^\pi = E$;
\item $\source^\pi(e) = [\source(e)]_{\sim_\pi}$ and $\target^\pi(e) = [\target(e)]_{\sim_\pi}$;
\item $\gamma^\pi = \gamma$.
\end{enumerate}
Since $E^\pi = E$, we often omit the superscript and use the same notation for the edge set of the quotient $T^\pi$. We write $\mcal{T}\langle I \rangle$ for the set of all test graphs in $I$ and $\C\mcal{T}\langle I \rangle$ for the complex vector space spanned by $\mcal{T}\langle I \rangle$.

We define the \emph{traffic state} $\tau_N: \C\mcal{T}\langle I \rangle \to \C$ as the unique linear functional
\[
  \tau_N[T] = \frac{1}{N^{c(T)}}\sum_{\phi: V \to [N]} \E\Big[\prod_{e \in E} \mbf{\Xi}_N^{(\gamma(e))}(\phi(\target(e)), \phi(\source(e)))\Big], \qquad \forall T \in \mcal{T}\langle I \rangle,
\]
where $c(T)$ denotes the number of connected components of $T$. For convenience, we abbreviate $(\phi(\target(e)), \phi(\source(e)))$ as $(\phi(e))$. Similarly, we define the \emph{injective traffic state} $\tau_N^0: \C\mcal{T}\langle I \rangle \to \C$ as the unique linear functional
\[
  \tau_N^0[T] = \frac{1}{N^{c(T)}}\sum_{\substack{\phi: V \to [N] \\ \text{s.t. $\phi$ is injective}}} \E\Big[\prod_{e \in E} \mbf{\Xi}_N^{(\gamma(e))}(\phi(e))\Big], \qquad \forall T \in \mcal{T}\langle I \rangle.
\]
Henceforth, we use the notation $\phi: V \hookrightarrow [N]$ to indicate an injective map. The functionals $\tau_N$ and $\tau_N^0$ satisfy the relations
\begin{align*}
  \tau_N[T] &= \sum_{\pi \in \mcal{P}(V)} N^{c(T^\pi)-c(T)}\tau_N^0[T^\pi];\\
  \tau_N^0[T] &= \sum_{\pi \in \mcal{P}(V)} \text{M\"{o}b}(0_V, \pi)N^{c(T^\pi) - c(T)}\tau_N[T^\pi],
\end{align*}
where $0_V$ denotes the singleton partition and $\text{M\"{o}b}$ is the usual M\"{o}bius function on the poset of partitions. 
\end{defn}

\begin{eg}\label{eg:traffic_probability}
Let $p \in \C\langle \mbf{x} \rangle$ be a monomial $p = x_{i(1)} \cdots x_{i(d)}$. Then
\[
  \frac{1}{N}\E[{\Tr}(p(\mcal{Z}_N))] = \tau_N[T_p] = \sum_{\pi \in \mcal{P}(V_p)} \tau_N^0[T_p^\pi],   
\]
where
\[
  \begin{tikzpicture}[shorten > = 2.5pt]
    \node at (-2, 0) {$T_p = $};
    \draw[fill=black] (1,0) circle (1.75pt);
    \draw[fill=black] (-1,0) circle (1.75pt);
    \draw[fill=black] (.5,.866) circle (1.75pt);
    \draw[fill=black] (.5,-.866) circle (1.75pt);
    \draw[fill=black] (-.5,.866) circle (1.75pt);
    \draw[fill=black] (-.5,-.866) circle (1.75pt);
    \draw[semithick, ->] (1,0) to node[pos=.625, right] {\footnotesize$i(d-1)$\normalsize} (.5,-.866);
    \draw[semithick, ->] (.5,-.866) to node[midway,below] {\footnotesize$\cdots$\normalsize} (-.5,-.866);
    \draw[semithick, ->] (-.5,-.866) to node[pos=.375, left] {\footnotesize$i(3)$\normalsize} (-1, 0);
    \draw[semithick, ->] (-1, 0) to node[pos=.625, left] {\footnotesize$i(2)$\normalsize} (-.5, .866);
    \draw[semithick, ->] (-.5, .866) to node[midway, above] {\footnotesize$i(1)$\normalsize} (.5, .866);
    \draw[semithick, ->] (.5, .866) to node[pos=.375, right] {\footnotesize$i(d)$\normalsize} (1, 0);
  \end{tikzpicture}
\]
\end{eg}

We can now prove the following generalization of Lemma 3.2 in \cite{Shl18}.

\begin{lemma}\label{lem:mixed_trace_matrix_units}
For any NC polynomials $p_1, \ldots, p_r \in \C\langle \mbf{x}\rangle$,
\[
  \lim_{N \to \infty} \E\mathopen{}\Big[{\Tr}\mathopen{}\Big(\prod_{s = 1}^r \mbf{E}_N^{(j_{s-1}, k_s)}p_s(\mcal{Z}_N)\Big)\mathclose{}\Big]\mathclose{} = \prod_{s = 1}^r [\indc{k_s = j_s}\tau_{\mcal{Z}}(p_s)],
\]
where $j_0 = j_r$.
\end{lemma}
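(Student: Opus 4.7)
The plan is to reduce the identity to a statement about entries of polynomials in $\mcal{Z}_N$. Using the elementary identity $\mbf{E}_N^{(a,b)}\mbf{A}\mbf{E}_N^{(c,d)} = A(b,c)\,\mbf{E}_N^{(a,d)}$ and iterating down the product, one sees
\[
  {\Tr}\Big(\prod_{s=1}^r \mbf{E}_N^{(j_{s-1},k_s)}\,p_s(\mcal{Z}_N)\Big) \;=\; \prod_{s=1}^r p_s(\mcal{Z}_N)(k_s,j_s),
\]
where the cyclic identification $j_0=j_r$ is used in the last step when the remaining matrix unit meets $p_r$ under the trace. The lemma is therefore equivalent to
\[
  \lim_{N\to\infty}\E\Big[\prod_{s=1}^r p_s(\mcal{Z}_N)(k_s,j_s)\Big] \;=\; \prod_{s=1}^r \indc{k_s=j_s}\,\tau_{\mcal{Z}}(p_s).
\]
By linearity I may assume each $p_s$ is a monomial $p_s=x_{i_s(1)}\cdots x_{i_s(d_s)}$, and then expand each entry along paths with pinned endpoints, so that the left-hand side becomes a sum over labelings of the disjoint union $T=\bigsqcup_{s=1}^r P_s$ of $r$ directed path test graphs, where the source and target of $P_s$ are constrained to take the values $k_s$ and $j_s$ respectively.

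Applying Wick's formula (or, in the generalized Wigner setting of \eqref{eq:generalized_wigner}, pair-truncating and controlling the remainder via the uniform moment bounds) rewrites this sum over labelings as a sum over pair partitions $\pi$ of the full edge set that respect both the matrix index $\gamma$ and the edge orientations, subject to the band-width constraints $|\phi(v)-\phi(w)|_N \leq b_N^{(\gamma(e))}$ along each edge $e=\{v,w\}$. Exactly as in Section~\ref{sec:band_variant_genus}, each such $\pi$ yields a quotient multidigraph whose free-vertex count governs the order of magnitude of its contribution in terms of $N$ and $\xi_N^{(i)}$.

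The plan from here is to isolate the partitions that survive the large-$N$ limit by the same two-step argument used to produce \eqref{eq:double_tree}: a pair partition $\pi$ contributes at leading order if and only if the underlying undirected quotient $\underline{C}_T^\pi$ is a forest with one component per path $P_s$, that is, $\pi$ is non-crossing on each $P_s$ and does not glue edges across distinct paths. For such $\pi$ the tree attached to $P_s$ forces the two pinned endpoints of $P_s$ to lie in the same equivalence class, so the contribution vanishes unless $k_s=j_s$; when $k_s=j_s$, the component closes into a cyclic non-crossing diagram and contributes the corresponding semicircular moment, which sums to $\tau_{\mcal{Z}}(p_s)$ by the results of \cite{Au18}. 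The product structure in $s$ comes for free from the disjoint-component condition on $\pi$.

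The main obstacle is the complementary bound: showing that every other $\pi$ contributes $o(1)$ as $N\to\infty$. Two bad scenarios arise: (a) $\pi$ creates a crossing within some $P_s$; (b) $\pi$ glues edges between distinct components $P_s$ and $P_{s'}$. In case (a), the same genus-counting argument as in \eqref{eq:band_genus} costs a factor $N/(\xi_N^{(i)})^2$, which is $o(1)$ when $b_N^{(i)}\gg\sqrt{N}$ and $O(1)$ when $b_N^{(i)}\asymp N$; in the latter case the two pinned endpoints of $P_s$ contribute an extra $1/N$ relative to the free trace setting, and this extra factor precisely kills the contribution. Case (b) is handled by checking that any cross-component gluing either identifies two of the fixed labels $k_s,k_{s'},j_s,j_{s'}$ (yielding $0$ generically) or adds a cycle to $\underline{C}_T^\pi$, again yielding a $\xi_N^{\#(\gamma\circ\pi)-(\text{total edges})}$ gain that goes to $0$ under the stated band-width hypothesis on $\mcal{Z}_N$. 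Combining these estimates with the leading-order identification above gives the claimed factorized limit.
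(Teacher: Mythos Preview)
Your overall strategy matches the paper's: reduce to a product of entries, expand combinatorially, and show that only the ``separated, non-crossing'' contributions survive. The leading-order identification (including the observation that a double-tree quotient of a path forces the endpoints together, hence $k_s=j_s$) is correct. The gaps are all in the complementary $o(1)$ bound.

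First, the lemma is stated for the general Wigner family $\mcal{Z}_N$ of \eqref{eq:generalized_wigner}, so Wick's formula is not available and your parenthetical ``pair-truncating and controlling the remainder'' does not supply an argument: higher cumulants do not vanish, and edges may be grouped in blocks of size $\geq 3$. The paper avoids this entirely by summing over \emph{vertex} partitions $\pi \in \mcal{P}(V)$ with injective labelings (the traffic framework), which absorbs all moment structures uniformly via \eqref{eq:uniform_bound} and never invokes genus.

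Second, your case (a) imports the trace estimate \eqref{eq:band_genus}, which is of order $N/\xi_N^2$, and then invokes the pinning only when $b_N\asymp\sqrt{N}$ (your ``$b_N\asymp N$'' is presumably a typo). But the lemma must hold for \emph{all} $b_N^{(i)}\to\infty$, including $b_N\ll\sqrt{N}$ where $N/\xi_N^2\to\infty$; as written you do not cover this regime. The pinning actually gives $O(\xi_N^{-2})$ uniformly for entries, so the case split is unnecessary---but you have to say so. The paper's spanning-tree bound \eqref{eq:bound_components} handles all band widths at once without reference to genus.

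Third, and most seriously, your case (b) analysis is incomplete. ``Yielding $0$ generically'' is not a proof: the claim must hold for all choices of $j_s,k_s$, including coincidences. Moreover your dichotomy misses the case where $\pi$ identifies a pinned endpoint of $P_s$ with an \emph{interior} vertex of $P_{s'}$; this need not force two fixed labels to collide, nor does it obviously create an extra cycle in the quotient. The paper treats precisely this: when $k_s=k_{s'}$ and the cross-identification is not at the common root, one exhibits four edge-disjoint paths between the identified vertex and the root, so any spanning tree omits an edge beyond what \eqref{eq:edges_multiplicity} already accounts for, giving $o(1)$; when $k_s\neq k_{s'}$, the component carries two distinct special vertices and one gets the improved bound \eqref{eq:bound_components_removed}. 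Both branches are needed.
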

\begin{proof}
Note that we can rewrite the desired trace as
\begin{align*}
  &{\Tr}\mathopen{}\Big(\mleft[\mbf{E}_N^{(j_r, k_1)}p_1(\mcal{Z}_N)\mbf{E}_N^{(j_1, k_2)}\mright]\mleft[\mbf{E}_N^{(k_2, k_2)}p_2(\mcal{Z}_N)\mbf{E}_N^{(j_2, k_3)}\mright] \cdots \\
  &\mleft[\mbf{E}_N^{(k_{r-1}, k_{r-1})}p_{r-1}(\mcal{Z}_N)\mbf{E}_N^{(j_{r-1}, k_r)}\mright]\mleft[\mbf{E}_N^{(k_r, k_r)}p_r(\mcal{Z}_N)\mbf{E}_N^{(j_r, j_r)}\mright]\Big)\mathclose{}\\
  &= \mathopen{}\Big[\prod_{s = 1}^r [p_s(\mcal{Z}_N)]_{k_s, j_s}\Big]\mathclose{}{\Tr}\mleft(\mbf{E}_N^{(j_r, k_2)}\mbf{E}_N^{(k_2, k_3)} \cdots \mbf{E}_N^{(k_{r-1}, k_r)} \mbf{E}_N^{(k_r, j_r)}\mright) \\
  &= \prod_{s = 1}^r [p_s(\mcal{Z}_N)]_{k_s, j_s},
\end{align*}
which reduces the problem to computing
\[
  \lim_{N \to \infty} \E\mathopen{}\Big[\prod_{s = 1}^r [p_s(\mcal{Z}_N)]_{k_s, j_s}\Big]\mathclose{}.
\]
Furthermore, by linearity, it suffices to prove the result for monomials $p_s \in \C\langle\mbf{x}\rangle$. For concreteness, we write
\[
  p_s = x_{i_s(1)}\cdots x_{i_s(d_s)},
\]
where $i_s: [d_s] \to I$. To convert this to the traffic notation, let $T_s = (G_s, \gamma_s)$ be the test graph
\begin{equation}\label{eq:path_test_graph}
  \begin{tikzpicture}[shorten > = 2.5pt, baseline=(current  bounding  box.center)]
    \node at (-2, 0) {$T_s = $};
    \draw[fill=black] (-1.25,0) circle (1.75pt);
    \draw[fill=black] (-.25,0) circle (1.75pt);
    \draw[fill=black] (.75,0) circle (1.75pt);
    \draw[fill=black] (1.75,0) circle (1.75pt);
    \node at (-1.25, -.375) {\footnotesize$v_{s,0}$\normalsize};
    \node at (-.25, -.375) {\footnotesize$v_{s,1}$\normalsize};
    \node at (.75, -.375) {\footnotesize$\cdots$\normalsize};
    \node at (1.75, -.375) {\footnotesize$v_{s,d_s}$\normalsize};
    \draw[semithick, ->] (-.25,0) to node[midway, above] {\footnotesize$i(1)$\normalsize} (-1.25,0);
    \draw[semithick, ->] (.75,0) to node[midway, above] {\footnotesize$\cdots$\normalsize} (-.25,0);
    \draw[semithick, ->] (1.75,0) to node[midway, above] {\footnotesize$i(d_s)$\normalsize} (.75,0);
  \end{tikzpicture}
\end{equation}
where $V_s = (v_{s, t-1})_{t \in [d_s + 1]}$, $E_s = (e_{s, t})_{t \in [d_s]}$, $v_{s,t-1} \sim_{e_{s, t}} v_{s, t}$, and $\gamma_s(e_{s, t}) = i_s(t)$. We define $T = (G, \gamma) = \sqcup_{s = 1}^r T_s$ as the disjoint union of the $T_s$, in which case
\begin{align*}
  \E\mathopen{}\Big[\prod_{s = 1}^r [p_s(\mcal{Z}_N)]_{k_s, j_s}\Big]\mathclose{} &= \sum_{\substack{\phi: V \to [N] \text{ s.t.}\\ \phi(v_{s, 0}) = k_s \text{ and}\\ \phi(v_{s, d_s}) = j_s}} \E\mathopen{}\Big[\prod_{e \in E} \mbf{\Xi}_N^{(\gamma(e))}(\phi(e))\Big]\mathclose{} \\
  &= \sum_{\pi \in \mcal{P}(V)} \sum_{\substack{\phi: V^\pi \hookrightarrow [N] \text{ s.t.}\\ \phi([v_{s, 0}]_{\sim_\pi}) = k_s \text{ and}\\ \phi([v_{s, d_s}]_{\sim_\pi}) = j_s}} \E\mathopen{}\Big[\prod_{e \in E} \mbf{\Xi}_N^{(\gamma(e))}(\phi(e))\Big]\mathclose{},
\end{align*}
where we recall that $E^\pi = E$. Note that the inner sum on the previous line might be empty: for example, if $v_{s, 0} \sim_{\pi} v_{s', d_s'}$ for some $k_s \neq j_{s'}$. Conversely, if $k_s = j_s'$ for some $s, s' \in [r]$, then we must have $v_{s, 0} \sim_\pi v_{s', d_{s'}}$. Thus, taking into account the various indices, we can restrict the outer summation over $\mcal{P}(V)$ to
\[
  \mcal{P}_+(V) = \mleft\{\pi \in \mcal{P}(V) \mathrel{}\mathclose{}\middle|\mathopen{}\mathrel{}  \begin{aligned}v_{s, 0} &\sim_\pi v_{s', d_{s'}} &\text{iff } k_s &= j_{s'}\\v_{s, 0} &\sim_\pi v_{s', 0} &\text{iff } k_s &= k_{s'}\\v_{s, d_s} &\sim_\pi v_{s', d_{s'}} &\text{iff \hspace{1pt}} j_s &= j_{s'}\end{aligned}\mright\}.
\]

We now analyze the contribution from a quotient $T^\pi$. First, we decompose $T^\pi$ into its connected components $T^\pi = T_{(1)}^\pi \sqcup \cdots \sqcup T_{(u)}^\pi$, where the notation $T_{(n)}^\pi$ is meant to distinguish between the test graphs $T_{(n)}^\pi$ and $T_s$. For an injective map $\phi: V^\pi \hookrightarrow [N]$, the independence of our matrix entries implies that
\[
  \E\mathopen{}\Big[\prod_{e \in E} \mbf{\Xi}_N^{(\gamma(e))}(\phi(e))\Big]\mathclose{} = \prod_{n = 1}^u \E\mathopen{}\Big[\prod_{e \in E_{(n)}^\pi} \mbf{\Xi}_N^{(\gamma(e))}(\phi(e))\Big]\mathclose{}.
\]
Let us then focus on a connected component $T_{(n)}^\pi = (G_{(n)}^\pi, \gamma_{(n)}^\pi)$. We define $\mcal{L}_{(n)}^\pi$ as the set of loop edges of $T_{(n)}^\pi$, which divides $E_{(n)}^\pi = \mcal{L}_{(n)}^\pi \sqcup \mcal{N}_{(n)}^\pi$. As before, we write $\underline{G}_{(n)}^\pi = (\underline{V}_{(n)}^\pi, \underline{E}_{(n)}^\pi)$ for the underlying simple graph. For an edge $e \in E_{(n)}^\pi$, we define
\begin{align*}
  [e] &= \{e' \in E_{(n)}^\pi \mid \{\source(e), \target(e)\} = \{\source(e'), \target(e')\}\};\\
  [e]_i &= \{e' \in [e] \mid \gamma(e') = i\}.
\end{align*}
Naturally, we can think of $\underline{E}_{(n)}^\pi = \{[e] : e \in \mcal{N}_{(n)}^\pi\}$. By a slight abuse of notation, we also write $\underline{\mcal{L}}_{(n)}^\pi = \{[l] : l \in \mcal{L}_{(n)}^\pi\}$. Separating the normalization
\begin{equation}\label{eq:separated}
  \E\mathopen{}\Big[\prod_{e \in E_{(n)}^\pi} \mbf{\Xi}_N^{(\gamma(e))}(\phi(e))\Big]\mathclose{} = \mathopen{}\Bigg(\prod_{e \in E_{(n)}^\pi} \frac{1}{\sqrt{\xi_N^{(\gamma(e))}}}\Bigg)\mathclose{}\E\mathopen{}\Big[\prod_{e \in E_{(n)}^\pi} \mbf{X}_N^{(\gamma(e))}(\phi(e))\Big]\mathclose{},
\end{equation}
we can again use the injectivity of $\phi$ to decompose the remaining expectation as
\begin{equation}\label{eq:uniform_bound}
  \prod_{[l] \in \underline{\mcal{L}}_{(n)}^\pi} \E\mathopen{}\Big[\prod_{l' \in [l]} \mbf{X}_N^{(\gamma(l'))}(\phi(l'))\Big]\mathclose{} \prod_{[e] \in \underline{E}_{(n)}^\pi} \E\mathopen{}\Big[\prod_{e' \in [e]} \mbf{X}_N^{(\gamma(e'))}(\phi(e'))\Big]\mathclose{} = O_d(1).
\end{equation}
The asymptotic follows from our strong moment assumption \eqref{eq:generalized_wigner}, which bounds the contribution from such a term uniformly in $\pi$ and $\phi$, where $d = \sum_{s=1}^r d_s$ is the total degree of our monomials $p_s$. Strictly speaking, the asymptotic depends on both $d$ and the finite set $I_0 = \gamma(E)$, but both are fixed independent of $N$ by our monomials $p_s$. For convenience, we omit this last detail from the notation.

We would then like to bound the number of injective maps $\phi$ that actually contribute (i.e., the number of $\phi$ such that the term in \eqref{eq:uniform_bound} is non-zero). Note that since the off-diagonal entries of our matrices are centered, we can assume that
\begin{equation}\label{eq:edges_multiplicity}
  \#([e]_{\gamma(e)}) \geq 2, \qquad \forall e \in \underline{E}_{(n)}^\pi; 
\end{equation}
otherwise, one of the factors in the product above vanishes. Of course, the graph $\underline{G}_{(n)}^\pi$ is still connected with the same vertex set as $G_{(n)}^\pi$, whence
\begin{equation}\label{eq:tree_formula}
  \#(V_{(n)}^\pi) \leq \#(\underline{E}_{(n)}^\pi) + 1.
\end{equation}

We must also remember to include the band width constraint in our bound. In particular, a contributing map $\phi$ satisfies
\[
  |\phi(\source(e)) - \phi(\target(e))|_N \leq \min_{e' \in [e]} b_N^{(\gamma(e'))}, \qquad \forall e \in \mcal{N}_{(n)}^\pi.
\]
We introduce some notation for the set of admissible maps
\[
  A_{N, \pi} = \mleft\{\phi: V^\pi \hookrightarrow [N] \mathrel{}\mathclose{}\middle|\mathopen{}\mathrel{} \begin{aligned} \phi([v_{s, 0}]_{\sim_\pi}) = k_s \quad \text{and} \quad \phi([v_{s, d_s}]_{\sim_\pi}) = j_s\\|\phi(\source(e)) - \phi(\target(e))|_N \leq b_N^{(\gamma(e))}, \quad \forall e \in E\end{aligned}\mright\}.
\]
Similarly, we define
\[
  A_{N, \pi}^{(n)} = \mleft\{\phi_n: V_{(n)}^\pi \hookrightarrow [N] \mathrel{}\mathclose{}\middle|\mathopen{}\mathrel{} \begin{aligned}\begin{aligned} \phi_n([v_{s, 0}]_{\sim_\pi}) &= k_s & &\text{if } [v_{s, 0}]_{\sim_\pi} \in V_{(n)}^\pi \\ \phi_n([v_{s, d_s}]_{\sim_\pi}) &= j_s & &\text{if } [v_{s, d_s}]_{\sim_\pi} \in V_{(n)}^\pi\end{aligned}\\|\phi_n(\source(e)) - \phi_n(\target(e))|_N \leq b_N^{(\gamma(e))},\quad \forall e \in E_{(n)}^\pi\end{aligned}\mright\}.
\]
Note that
\begin{equation}\label{eq:submultiplicative_maps}
  \#(A_{N, \pi}) \leq \prod_{n = 1}^u \#(A_{N, \pi}^{(n)}).
\end{equation}

Consider a spanning tree $\underline{H}_{(n)}^\pi = (\underline{V}_{(n)}^\pi, \underline{F}_{(n)}^\pi)$ of $\underline{G}_{(n)}^\pi$. We think of a spanning tree as recording a minimal working subset of the band width constraints. In particular, $\underline{H}_{(n)}^\pi$ bounds the number of contributing maps $\phi|_{V_{(n)}^\pi} \in A_{N, \pi}^{(n)}$ by
\begin{equation}\label{eq:weak_bound_components}
  \#(A_{N, \pi}^{(n)}) \leq N\prod_{[e] \in \underline{F}_{(n)}^\pi} \min_{e' \in [e]} \xi_N^{(\gamma(e'))}.
\end{equation}
To see this, pick an arbitrary initial vertex $[v_0]_{\sim \pi}$ of $\underline{H}_{(n)}^\pi$. Clearly, we have $N$ options for $\phi([v_0]_{\sim_\pi}) \in [N]$ at this stage. The bound then follow from walking through the rest of our graph while satisfying the band width constraints imposed by the edges $[e] \in \underline{F}_{(n)}^\pi$. Note that this fails to account for the special vertices $[v_{s, 0}]_{\sim \pi}$ and $[v_{s, d_s}]_{\sim_\pi}$, which have fixed labels $\phi([v_{s, 0}]_{\sim \pi}) = k_s$ and $\phi([v_{s, d_s}]_{\sim \pi}) = j_s$ respectively. In particular, each connected component has at least one such special vertex. Choosing this special vertex to be the initial vertex $[v_0]_{\sim \pi}$ removes the factor of $N$ in our earlier bound, and so
\begin{equation}\label{eq:bound_components}
  \#(A_{N, \pi}^{(n)}) \leq \prod_{[e] \in \underline{F}_{(n)}^\pi} \min_{e' \in [e]} \xi_N^{(\gamma(e'))} = O\mathopen{}\Big(\prod_{e \in E_{(n)}^\pi} \sqrt{\xi_N^{(\gamma(e))}}\Big)\mathclose{},
\end{equation}
where the asymptotic follows from \eqref{eq:edges_multiplicity}. In view of \eqref{eq:separated}-\eqref{eq:bound_components}, we conclude that
\begin{align}\label{eq:expectation_asymptotic}
  \E\mathopen{}\Big[\prod_{s = 1}^r [p_s(\mcal{Z}_N)]_{k_s, j_s}\Big]\mathclose{} &= \sum_{\pi \in \mcal{P}_+(V)} \sum_{\phi \in A_{N, \pi}} \prod_{n = 1}^u \mathopen{}\Bigg[\frac{\E\mathopen{}\big[\prod_{e \in E_{(n)}^\pi} \mbf{X}_N^{(\gamma(e))}(\phi(e))\big]\mathclose{}}{\prod_{e \in E_{(n)}^\pi} \sqrt{\xi_N^{(\gamma(e))}}}\Bigg]\mathclose{}\\
                                                      &= \sum_{\pi \in \mcal{P}_+(V)} \prod_{n = 1}^u O_d\mathopen{}\Bigg(\frac{\#(A_{N, \pi}^{(n)})}{\prod_{e \in E_{(n)}^\pi} \sqrt{\xi_N^{(\gamma(e))}}}\Bigg)\mathclose{} = O_d(1).\notag
\end{align}

Altogether, our analysis implies that the expectation survives the normalization, but only just barely. Indeed, in formulating the bound \eqref{eq:bound_components}, we only considered one of the special vertices $[v_{s, 0}]_{\sim_\pi}, [v_{s, d_s}]_{\sim_\pi}$ despite the fact that each test graph $T_s$ has two such vertices $v_{s, 0}, v_{s, d_s}$ before the identifications by $\pi \in \mcal{P}_+(V)$. Assume then that $k_s \neq j_s$ for some $s \in [r]$. In this case, $[v_{s, 0}]_{\sim_\pi} \neq [v_{s, d_s}]_{\sim_\pi}$ are distinct vertices in some connected component $T_{(n)}^\pi$. As a result, we lose an additional degree of freedom when choosing a contributing map $\phi|_{V_{(n)}^\pi} \in A_{N, \pi}^{(n)}$. To see this, we return to our spanning tree $\underline{H}_{(n)}^\pi$. We denote the last edge on the unique path from $[v_{s, 0}]_{\sim_\pi}$ to $[v_{s, d_s}]_{\sim_\pi}$ in $\underline{H}_{(n)}^\pi$ by $[e_*]$. Running through the same argument as before with $[v_{s, 0}]_{\sim_\pi}$ as the initial vertex now gives the improved bound
\begin{equation}\label{eq:bound_components_removed}
  \#(A_{N, \pi}^{(n)}) \leq \prod_{[e] \in \underline{F}_{(n)}^\pi \setminus \{[e_*]\}} \min_{e' \in [e]} \xi_N^{(\gamma(e'))} = o\mathopen{}\Big(\prod_{[e] \in \underline{F}_{(n)}^\pi} \min_{e' \in [e]} \xi_N^{(\gamma(e'))}\Big)\mathclose{},
\end{equation}
where the asymptotic follows from \eqref{eq:band_width_divergence}. Putting this back in to \eqref{eq:expectation_asymptotic} proves that
\[
  \lim_{N \to \infty} \E\mathopen{}\Big[\prod_{s = 1}^r [p_s(\mcal{Z}_N)]_{k_s, j_s}\Big]\mathclose{} = 0 \text{ if } k_s \neq j_s \text{ for some } s \in [r]. 
\]

Let us now assume that $k_s = j_s$ for every $s \in [r]$. A partition $\pi \in \mcal{P}_+(V)$ then necessarily identifies $v_{s, 0} \sim_\pi v_{s, d_s}$ for every $s \in [r]$. Furthermore, if $k_s = k_{s'}$ for some $s, s' \in [r]$, then $\pi$ must also identify $v_{s, 0} \sim_\pi v_{s', 0}$. We imagine making these identifications first before carrying out the rest of the identifications prescribed by $\pi$. At the first step, this corresponds to identifying the ends of the test graph \eqref{eq:path_test_graph}, creating a directed cycle $C_s$ with a special vertex $[v_{s, 0}]_{\sim_\pi} = [v_{s, d_s}]_{\sim_\pi}$ that we can think of as a root. We then identify the roots of different cycles $C_s, C_{s'}$ if $k_s = k_{s'}$. It will be convenient to redefine \eqref{eq:path_test_graph} to account for this first step beforehand, namely
\begin{equation}\label{eq:cycle_test_graph}
  \begin{tikzpicture}[shorten > = 2.5pt, baseline=(current  bounding  box.center)]
    \node at (-2.5, 0) {$T_s = $};
    \draw[fill=black] (1,0) circle (1.75pt);
    \node at (1.625, 0) {\footnotesize$v_{s, d_s-1}$\normalsize};
    \draw[fill=black] (-1,0) circle (1.75pt);
    \node at (-1.375, 0) {\footnotesize$v_{s, 2}$\normalsize};
    \draw[fill=black] (.5,.866) circle (1.75pt);
    \node at (1, 1.091) {\footnotesize$v_{s, d_s\phantom{-2}}$\normalsize}; 
    \draw[fill=black] (.5,-.866) circle (1.75pt);
    \node at (1, -1.166) {\footnotesize$v_{s, d_s-2}$\normalsize}; 
    \draw[fill=black] (-.5, .866) circle (1.75pt);
    \node at (-.725, -1.166) {\footnotesize$v_{s, 3}$\normalsize};     
    \draw[fill=black] (-.5,-.866) circle (1.75pt);
    \node at (-.725,1.091) {\footnotesize$v_{s, 1}$\normalsize}; 
    \draw[semithick, ->] (1,0) to node[pos=.625, right] {\footnotesize$i(d_s-1)$\normalsize} (.5,-.866);
    \draw[semithick, ->] (.5,-.866) to node[midway,below] {\footnotesize$\cdots$\normalsize} (-.5,-.866);
    \draw[semithick, ->] (-.5,-.866) to node[pos=.375, left] {\footnotesize$i(3)$\normalsize} (-1, 0);
    \draw[semithick, ->] (-1, 0) to node[pos=.625, left] {\footnotesize$i(2)$\normalsize} (-.5, .866);
    \draw[semithick, ->] (-.5, .866) to node[midway, above] {\footnotesize$i(1)$\normalsize} (.5, .866);
    \draw[semithick, ->] (.5, .866) to node[pos=.375, right] {\footnotesize$i(d_s)$\normalsize} (1, 0);
  \end{tikzpicture}
\end{equation}

Now, suppose that $\pi \in \mcal{P}_+(V)$ identifies vertices across different cycles:
\[
  v_{s, t} \sim_\pi v_{s', t'} \text{ for some } s \neq s'.
\]
If $k_s \neq k_{s'}$, then we claim that
\begin{equation}\label{eq:null_sum}
  \lim_{N \to \infty} \sum_{\substack{\phi: V^\pi \hookrightarrow [N] \text{ s.t.}\\ \phi([v_{s, d_s}]_{\sim_\pi}) = k_s}} \E\mathopen{}\Big[\prod_{e \in E} \mbf{\Xi}_N^{(\gamma(e))}(\phi(e))\Big]\mathclose{} = 0.
\end{equation}
To see this, let $T_{(n_*)}^\pi$ denote the connected component of $T^\pi$ that contains the vertex $[v_{s, d_s}]_{\sim \pi}$. By assumption, $T_{(n_*)}^\pi$ also contains the vertex $[v_{s', d_{s'}}]_{\sim \pi} \neq [v_{s, d_s}]_{\sim \pi}$. Our earlier work shows that $\#(A_{N, \pi}^{(n_*)})$ satisfies the asymptotic \eqref{eq:bound_components_removed} since the component $T_{(n_*)}^\pi$ has two special vertices to account for, which proves \eqref{eq:null_sum}.

If $k_s = k_{s'}$, then we claim that \eqref{eq:null_sum} holds if $v_{s, t} \not\sim_\pi v_{s', d_{s'}}$. To see this, let $T_{(n_*)}^\pi$ be as before. If $v_{s, t} \not\sim_\pi v_{s', d_{s'}}$, then $[v_{s, t}]_{\sim_\pi} \neq [v_{s', d_{s'}}]_{\sim_\pi}$ are distinct vertices in $T_{(n_*)}^\pi$. In particular, there are four edge-disjoint paths from $[v_{s, t}]_{\sim_\pi}$ to $[v_{s', d_{s'}}]_{\sim_\pi}$. Indeed, there are two edge-disjoint paths from $[v_{s, t}]_{\sim_\pi}$ to $[v_{s, d_s}]_{\sim_\pi}$ using only the edges of $T_s$, and there are two edge-disjoint paths from $[v_{s', t'}]_{\sim_\pi}$ to $[v_{s', d_{s'}}]_{\sim_\pi}$ using only the edges of $T_{s'}$. Thus, any spanning tree $\underline{H}_{(n_*)}^\pi$ of $T_{(n_*)}^\pi$ will necessarily omit (at least) one of the total edges from these paths. In view of \eqref{eq:edges_multiplicity} and \eqref{eq:band_width_divergence}, we conclude that
\[
  \#(A_{N, \pi}^{(n_*)}) \leq \prod_{[e] \in \underline{F}_{(n_*)}^\pi} \min_{e' \in [e]} \xi_N^{(\gamma(e'))} = o\mathopen{}\Big(\prod_{e \in E_{(n_*)}^\pi} \sqrt{\xi_N^{(\gamma(e))}}\Big)\mathclose{},
\]
which again proves \eqref{eq:null_sum}.

Thus, we are left to consider partitions
\begin{equation}\label{eq:separated_partitions}
  \mcal{P}_{++}(V) = \mleft\{\pi \in \mcal{P}_+(V) \mathrel{}\mathclose{}\middle|\mathopen{}\mathrel{}  \begin{aligned}v_{s, t} &\sim_\pi v_{s', t'} & &\text{only if } k_s=k_s'\\v_{s, t} &\sim_\pi v_{s', d_{s'}} & &\text{if } v_{s, t} \sim_\pi v_{s', t'} \text{ and } s \neq s'\end{aligned}\mright\},
\end{equation}
where
\[
  \lim_{N \to \infty} \E\mathopen{}\Big[\prod_{s = 1}^r [p_s(\mcal{Z}_N)]_{k_s, k_s}\Big]\mathclose{} = \lim_{N \to \infty} \sum_{\pi \in \mcal{P}_{++}(V)} \sum_{\phi \in A_{N, \pi}} \prod_{n = 1}^u \E\mathopen{}\Big[\prod_{e \in E_{(n)}^\pi} \mbf{\Xi}_N^{(\gamma(e))}(\phi(e))\Big]\mathclose{}.
\]
Note that $\mcal{P}_{++}(V)$ factorizes into partitions of the test graphs \eqref{eq:cycle_test_graph} via the bijection
\[
  \bigtimes_{s = 1}^r \mcal{P}(V_s) \to \mcal{P}_{++}(V), \qquad (\pi_1, \ldots, \pi_r) \mapsto \coprod_{s=1}^r \pi_s,
\]
where $\pi = \coprod_{s = 1}^r \pi_s$ is the partition obtained from $(\pi_1, \ldots, \pi_r)$ by first taking the disjoint union of the blocks of the $\pi_s$ and then identifying the vertices $v_{s, d_s} \sim_\pi v_{s', d_{s'}}$ that satisfy $k_s = k_{s'}$. Of course, the resulting quotient test graph $T^\pi$ might have fewer than $r$ connected components; however, the defining property \eqref{eq:separated_partitions} of $\mcal{P}_{++}(V)$ implies that $T^\pi$ can be obtained as follows: first, let $(\pi_1, \ldots, \pi_r)$ be the factorization of $\pi$ as above. Next, apply the partitions $\pi_s$ to obtain the quotient test graphs $T_s^{\pi_s} = (V_s^{\pi_s}, E_s^{\pi_s})$. Finally, in the disjoint union of the $T_s^{\pi_s}$, identify the vertices $[v_{s, d_s}]_{\sim_{\pi_s}}$ and $[v_{s', d_{s'}}]_{\sim_{\pi_{s'}}}$ if $k_s = k_{s'}$. The injectivity of the maps $\phi \in A_{N, \pi}$ then implies that
\begin{align*}
  \E\mathopen{}\Big[\prod_{s = 1}^r [p_s(\mcal{Z}_N)]_{k_s, k_s}\Big]\mathclose{} &= \sum_{\pi \in \mcal{P}_{++}(V)} \sum_{\phi \in A_{N, \pi}} \prod_{n = 1}^u \E\mathopen{}\Big[\prod_{e \in E_{(n)}^\pi} \mbf{\Xi}_N^{(\gamma(e))}(\phi(e))\Big]\mathclose{} \\
                                                             &= \sum_{(\pi_1, \ldots, \pi_r) \in \times_{r=1}^s \mcal{P}(V_s)} \sum_{\phi \in A_{N, \coprod_{s=1}^r \pi_s}} \prod_{s = 1}^r \E\mathopen{}\Big[\prod_{e \in E_s^{\pi_s}} \mbf{\Xi}_N^{(\gamma(e))}(\phi(e))\Big]\mathclose{}.
\end{align*}

We would also like to factorize the set
\[
  A_{N, \coprod_{s=1}^r \pi_s} \to \bigtimes_{s = 1}^r B_{N, \pi_s}, \qquad \phi \mapsto (\phi|_{V_1^{\pi_1}}, \ldots, \phi|_{V_s^{\pi_s}}),
\]
where
\[
  B_{N, \pi_s} = \mleft\{\phi_s: V_s^{\pi_s} \hookrightarrow [N] \mathrel{\Bigg|} \begin{aligned} \phi_s([v_{s, d_s}]_{\sim_{\pi_s}}) = k_s \\ |\phi_s(\source(e)) - \phi_s(\target(e))|_N \leq b_N^{(\gamma(e))}, \quad \forall e \in E_s\end{aligned}\mright\};
\]
however, in general, this map is not bijective since $\#(A_{N, \coprod_{s=1}^r \pi_s}) < \prod_{s = 1}^r \#(B_{N, \pi_s})$ for $r \geq 2$. Nevertheless, we do have the asymptotic equality
\begin{equation}\label{eq:factorizing_maps_asymptotically}
  \lim_{N \to \infty} \frac{ \#(A_{N, \coprod_{s=1}^r \pi_s})}{\prod_{s = 1}^r \#(B_{N, \pi_s})} = 1.
\end{equation}
The contributions from the additional terms counted by the maps in $\times_{s = 1}^r B_{N, \pi_s}$ can still be bounded uniformly via \eqref{eq:uniform_bound}. In view of \eqref{eq:factorizing_maps_asymptotically}, this implies that such overcounting will not affect our calculations in the limit. In other words,
\begin{gather*}
  \lim_{N \to \infty} \sum_{(\pi_1, \ldots, \pi_r) \in \times_{r=1}^s \mcal{P}(V_s)} \sum_{\phi \in A_{N, \coprod_{s=1}^r \pi_s}} \prod_{s = 1}^r \E\mathopen{}\Big[\prod_{e \in E_s^{\pi_s}} \mbf{\Xi}_N^{(\gamma(e))}(\phi(e))\Big]\mathclose{} \\
  = \lim_{N \to \infty} \prod_{s = 1}^r \sum_{\pi_s \in \mcal{P}(V_s)} \sum_{\phi_s \in B_{N, \pi_s}}  \E\mathopen{}\Big[\prod_{e \in E_s^{\pi_s}} \mbf{\Xi}_N^{(\gamma(e))}(\phi_s(e))\Big]\mathclose{}.
\end{gather*}
So, we will be done if we can prove that
\begin{equation}\label{eq:matrix_unit_normalization}
  \lim_{N \to \infty} \sum_{\pi_s \in \mcal{P}(V_s)} \sum_{\phi_s \in B_{N, \pi_s}}  \E\mathopen{}\Big[\prod_{e \in E_s^{\pi_s}} \mbf{\Xi}_N^{(\gamma(e))}(\phi_s(e))\Big]\mathclose{} = \lim_{N \to \infty} \frac{1}{N}\E[{\Tr}(p_s(\mcal{Z}_N))] .
\end{equation}

The main result in \cite{Au18} implies that
\[
  \lim_{N \to \infty} \frac{1}{N}\E[{\Tr}(p_s(\mcal{Z}_N))] = \lim_{N \to \infty} \sum_{\substack{\pi_s \in \mcal{P}(V_s) \text{ s.t. } \\ T_s^{\pi_s} \text{ is a colored} \\ \text{double tree}}} \frac{1}{N} \sum_{\phi_s \in \mcal{B}_{N, \pi_s}^{(s)}} \E\mathopen{}\Big[\prod_{e \in E_s^{\pi_s}} \mbf{\Xi}_N^{(\gamma(e))}(\phi_s(e))\Big]\mathclose{},
\]
where a colored double tree is a double tree whose twin edges $[e] = \{e, e'\}$ each have the same color $\gamma(e) = \gamma(e')$ and
\[
  \mcal{B}_{N, \pi_s} = \mleft\{\phi_s: V_s^{\pi_s} \hookrightarrow [N] \mathrel{}\mathclose{}\middle|\mathopen{}\mathrel{} |\phi_s(\source(e)) - \phi_s(\target(e))|_N \leq b_N^{(\gamma(e))}, \quad \forall e \in E_s \mright\}.
\]
In short, this follows from \eqref{eq:edges_multiplicity}, \eqref{eq:tree_formula}, and the spanning tree argument. Similarly, we can strict the outer sum in \eqref{eq:matrix_unit_normalization} to the same class of partitions $\pi_s$. Note that for large $N$, the periodicity of the band width condition implies that
\[
  \frac{\#(\mcal{B}_{N, \pi_s})}{N} = \#(B_{N, \pi_s}).
\]
We use the fact that a quotient of a directed cycle is a double tree only if each of its twin edges $[e] = \{e, e'\}$ go in opposite directions $\source(e) = \target(e')$ and $\source(e') = \target(e)$ \cite[Figure 5]{Au18}. In that case,
\[
  \E\mathopen{}\Big[\prod_{e \in E_s^{\pi_s}} \mbf{X}_N^{(\gamma(e))}(\phi_s(e))\Big]\mathclose{} = 1
\]
since the calculations in the expectation only involve variances (as opposed to pseudo-variances). This homogeneity allows us to conclude that averaging over the labels $\phi_s([v_{s, d_s}]_{\sim_{\pi_s}}) \in [N]$ does not affect the calculation. Consequently,
\[
  \lim_{N \to \infty} \frac{1}{N} \sum_{\phi_s \in \mcal{B}_{N, \pi_s}} \E\mathopen{}\Big[\prod_{e \in E_s^{\pi_s}} \mbf{\Xi}_N^{(\gamma(e))}(\phi_s(e))\Big]\mathclose{} = \lim_{N \to \infty} \sum_{\phi_s \in B_{N, \pi_s}} \E\mathopen{}\Big[\prod_{e \in E_s^{\pi_s}} \mbf{\Xi}_N^{(\gamma(e))}(\phi_s(e))\Big]\mathclose{},
\]
as was to be shown.
\end{proof}

Assuming an infinitesimal distribution for the family $\mcal{Z}_N$, Lemma \ref{lem:mixed_trace_matrix_units} proves that $\mcal{Z}_N$ and $\mcal{E}_N$ are asymptotically infinitesimally free. Indeed, this follows from a straightforward application of the following criteria for infinitesimal freeness.

\begin{prop}[\hspace{-1pt}\cite{Shl18}]\label{prop:criteria_infinitesimal_freeness}
Let $(\mcal{A}, \varphi, \varphi')$ be a tracial infinitesimal NC probability space. Suppose that $\mcal{Z}$ and $\mcal{E}$ are subalgebras of $\mcal{A}$ such that $\mcal{E} \subset \ker(\varphi)$ (in particular, $\mcal{E}$ is non-unital). Then $\mcal{Z}$ and $\mcal{E}$ are infinitesimally free iff for any $r$-tuples $(E_s)_{s = 1}^r \subset \mcal{E}$ and $(Z_s)_{s = 1}^r \subset \interior{\mcal{Z}}$, we have the identities
\begin{enumerate}[label=(\roman*)]
\item $\varphi(E_1Z_1E_2Z_2 \cdots E_rZ_r) = 0$;
\item $\varphi'(E_1Z_1E_2Z_2 \cdots E_r Z_r) = 0$.
\end{enumerate}
\end{prop}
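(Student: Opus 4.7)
The plan is to verify both directions of the equivalence, with the forward direction essentially immediate and the backward direction following from a systematic reduction by traciality and centering.

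Forward direction: Since $\mcal{E}\subset\ker(\varphi)$, each $E_s$ already lies in $\interior{\mcal{E}}$, so (i) is nothing but freeness of $\mcal{Z}$ and $\mcal{E}$ in $(\mcal{A},\varphi)$ applied to the alternating centered word $E_1 Z_1 \cdots E_r Z_r$. For (ii) I invoke the Leibniz rule of Definition \ref{def:infinitesimal_free_probability}(ii), which expresses $\varphi'(E_1 Z_1 \cdots E_r Z_r)$ as a sum of $2r$ terms, each of the form $\varphi'(a_j)\cdot\varphi(E_1 Z_1 \cdots \widehat{a_j}\cdots E_r Z_r)$. After deleting $a_j$ the resulting length-$(2r-1)$ word has two adjacent factors from the same subalgebra; these combine into a single element, which is automatically centered if it comes from $\mcal{E}$ (because $\mcal{E}\subset\ker(\varphi)$) and is centered after a centered-plus-scalar split if it comes from $\mcal{Z}$. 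Freeness in $(\mcal{A},\varphi)$ then annihilates the shorter alternating centered subword, and an induction on length shows each Leibniz term is zero.

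Backward direction: The key step is to derive freeness of $\mcal{Z}$ and $\mcal{E}$ in $(\mcal{A},\varphi)$ from hypothesis (i) alone. A general alternating centered word in $\mcal{Z}\cup\mcal{E}$ may start and end with either algebra. Using traciality of $\varphi$, I cyclically rotate the word; if two letters from the same algebra become adjacent, I fuse them by the same mechanism as above (automatic centering via $\mcal{E}\subset\ker(\varphi)$, or a centered-plus-scalar split in $\mcal{Z}$ whose scalar remainder spawns a strictly shorter word handled by the inductive hypothesis). After finitely many such moves the word is reduced to the canonical form $E_1 Z_1 \cdots E_r Z_r$ of hypothesis (i), which is zero. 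With freeness in hand I verify the Leibniz identity of Definition \ref{def:infinitesimal_free_probability}(ii): traciality of $\varphi'$ together with the same rotation/collapsing procedure brings $\varphi'(a_1\cdots a_k)$ into the form $\varphi'(E_1 Z_1 \cdots E_r Z_r)$, which vanishes by (ii); the right-hand side of the Leibniz formula is then handled exactly as in the forward direction, with each term killed by the now-established freeness.

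The main obstacle is the bookkeeping in the backward direction: one must cleanly handle the four starting/ending configurations of an alternating word, track how the centered-plus-scalar expansions propagate under cyclic rotations, and organize the induction on word length so that it actually terminates. Traciality of both $\varphi$ and $\varphi'$, together with the assumption $\mcal{E}\subset\ker(\varphi)$, is precisely what collapses every reduction onto one of the canonical forms in (i) and (ii); dropping the non-unital assumption on $\mcal{E}$ would force carrying extra scalar terms through every rotation and considerably complicate the induction.
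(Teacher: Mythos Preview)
The paper does not give its own proof of this proposition: it is quoted verbatim from \cite{Shl18} and used as a black box, so there is no in-paper argument to compare your proposal against.

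As for correctness, your outline is sound. The forward direction is exactly as you say. In the backward direction, one point deserves to be made explicit. When you reduce $\varphi'(a_1\cdots a_k)$ by cyclic rotation and centered-plus-scalar splits in $\mcal{Z}$, the scalar remainders produce terms of the form $\varphi'(\text{strictly shorter alternating word})$. Hypothesis (ii) by itself only tells you $\varphi'$ vanishes on words already in the canonical shape $E_1Z_1\cdots E_rZ_r$, not on these intermediate shorter words. To kill them you must appeal to the \emph{inductive hypothesis}: the Leibniz identity holds for the shorter word, and its right-hand side is then zero by the freeness you have already established. You do say ``organize the induction on word length,'' but the proposal reads as if (ii) alone disposes of the left-hand side; it does not --- the induction is doing real work there, and both parts of the inductive hypothesis (freeness in $(\mcal{A},\varphi)$ and the Leibniz rule for $\varphi'$) need to be carried simultaneously.
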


For example, this proves a preliminary version of Theorem \ref{thm:infinitesimal_freeness_wigner} (\emph{resp.,} Theorem \ref{thm:infinitesimal_freeness_band}) restricted to the matrices $\mbf{W}_N$ (\emph{resp.,} $\mbf{\Xi}_N$) and $(\mbf{E}_N^{(j, k)})_{1 \leq j, k \leq N_0}$. We now extend the calculation to include the matrix $\mbf{K}_N = \frac{1}{N}\mbf{J}_N$. For this, we will need the following lemma concerning the formation of double trees as quotients of paths.

\begin{lemma}\label{lem:paths_double_trees}
Let $G_n = (V_n, E_n)$ be a path graph of length $n$, where
\[
  \begin{tikzpicture}
    \node at (-2, 0) {$G_n = $};
    \draw[fill=black] (-1.25,0) circle (1.75pt);
    \draw[fill=black] (-.25,0) circle (1.75pt);
    \draw[fill=black] (.75,0) circle (1.75pt);
    \draw[fill=black] (1.75,0) circle (1.75pt);
    \node at (-1.25, -.375) {\footnotesize$v_0$\normalsize};
    \node at (-.25, -.375) {\footnotesize$v_1$\normalsize};
    \node at (.75, -.375) {\footnotesize$\cdots$\normalsize};
    \node at (1.75, -.375) {\footnotesize$v_n$\normalsize};
    \draw[semithick] (-.25,0) to node[midway, above] {\footnotesize$e_1$\normalsize} (-1.25,0);
    \draw[semithick] (.75,0) to node[midway, above] {\footnotesize$\cdots$\normalsize} (-.25,0);
    \draw[semithick] (1.75,0) to node[midway, above] {\footnotesize$e_n$\normalsize} (.75,0);
  \end{tikzpicture}  
\]
If $\pi \in \mcal{P}(V_n)$ is such that $G_n^\pi$ is a double tree, then $v_0 \sim_\pi v_n$.
\end{lemma}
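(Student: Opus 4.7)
My plan is to use a parity-of-degrees argument. The key observation is that in the path $G_n$, only the two endpoints $v_0$ and $v_n$ have odd degree (each has degree $1$), while every interior vertex $v_i$, $1\le i\le n-1$, has degree $2$. The double tree condition will force every vertex of the quotient to have even degree, which will compel $\pi$ to identify the two odd-degree vertices.

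More precisely, I would argue as follows. For any $\pi \in \mcal{P}(V_n)$, the degree of a vertex $[v]_\pi$ in the multigraph $G_n^\pi$ equals the sum $\sum_{w \in [v]_\pi} \deg_{G_n}(w)$, since the edge set is preserved under the quotient and no edge becomes a loop whose endpoints coincide (any such loop would still contribute $2$ to that block's degree, but the double tree hypothesis rules loops out anyway). Thus the parity of $\deg_{G_n^\pi}([v]_\pi)$ is determined by the number of elements of $\{v_0, v_n\}$ that lie in the block $[v]_\pi$.

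Next, I would invoke the hypothesis that $G_n^\pi$ is a double tree to conclude that every vertex of $G_n^\pi$ has \emph{even} degree. Indeed, in a double tree the underlying simple graph $\underline{G}_n^\pi$ is a tree, each edge appears with multiplicity exactly $2$, and there are no loops; so the degree of any vertex in the multigraph $G_n^\pi$ is exactly twice its degree in the underlying tree $\underline{G}_n^\pi$, which is even.

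Combining these two observations, every block of $\pi$ must contain an even number of elements from $\{v_0,v_n\}$. Since $v_0$ must lie in some block, and that block must contain either $0$ or $2$ elements of $\{v_0,v_n\}$, it must contain $v_n$ as well. Hence $v_0 \sim_\pi v_n$, as desired. I do not anticipate a serious obstacle: the entire argument is a one-line parity count once the double tree definition is unpacked, and none of the steps require any fact beyond what is already stated in the excerpt.
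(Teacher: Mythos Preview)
Your argument is correct and is in fact cleaner than the paper's. Both proofs hinge on the same observation---that every vertex of a double tree has even degree---but the paper uses this only to locate \emph{some} $v_i$ with $v_0 \sim_\pi v_i$, and then runs an induction on the length of the path: the edges $e_1,\dots,e_i$ span a double tree, the remaining edges $e_{i+1},\dots,e_n$ span another double tree, and the induction hypothesis applied to the latter gives $v_i \sim_\pi v_n$. Your parity count bypasses the induction entirely: since the degree of a block in $G_n^\pi$ is the sum of the $G_n$-degrees of its members, and only $v_0,v_n$ have odd degree in the path, the even-degree constraint forces them into the same block in one stroke. The paper's approach does yield a bit more structural information (a decomposition of the double tree into two smaller double trees), but that extra content is not used anywhere, so your route is simply shorter.
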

\begin{proof}
Since a double tree has an even number of edges, we only need to prove the result for even values of $n$. We proceed by induction on the length of the path. If $n \in \{0, 2\}$, then the statement follows. So, assume the result is true for paths of length $n \leq 2m$, and consider $G_{2m+2}$. If $G_{2m+2}^\pi$ is a double tree, then it must identify $v_0$ with another vertex $v_i \in V_{2m+2}$ for some $i \in [2m+2]$. Indeed, this follows from the fact that the degree of every vertex in a double tree is even. The edges $e_1, \ldots, e_i$ then form a trail in $G_{2m + 2}^\pi$ starting and ending at the same vertex $v_0 \sim_\pi v_i$. This implies that the subgraph $H$ spanned by these edges is also a double tree. Since the remaining edges $e_{i+1}, \ldots, e_n$ span a connected subgraph $K$ of $G_{2m + 2}^\pi$, the fact that $H$ is a double tree implies that $K$ is a double tree as well. We can then apply the induction hypothesis to conclude that $v_i \sim_\pi v_{2m+2}$.
\end{proof}

We use this to prove the analogue of Lemma \ref{lem:mixed_trace_matrix_units} for $\mbf{K}_N$.

\begin{lemma}\label{lem:mixed_trace_all_ones}
For any NC polynomials $p_1, \ldots, p_r \in \C\langle \mbf{x}\rangle$,
\[
  \lim_{N \to \infty} \E\mathopen{}\Big[{\Tr}\mathopen{}\Big(\prod_{s = 1}^r \mbf{K}_Np_s(\mcal{Z}_N)\Big)\mathclose{}\Big]\mathclose{} = \prod_{s = 1}^r \tau_{\mcal{Z}}(p_s).
\]
\end{lemma}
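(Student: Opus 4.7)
The plan is to mimic the proof of Lemma \ref{lem:mixed_trace_matrix_units}, exploiting the rank-one structure $\mbf{K}_N = \frac{1}{N}\mbf{1}\mbf{1}^T$ to reduce the problem to an analysis on the path test graphs $T_s$ of \eqref{eq:path_test_graph}. Applying cyclicity of the trace,
\[
  {\Tr}\mathopen{}\Big(\prod_{s=1}^r \mbf{K}_N p_s(\mcal{Z}_N)\Big)\mathclose{} = \frac{1}{N^r}\prod_{s=1}^r \mbf{1}^T p_s(\mcal{Z}_N)\mbf{1} = \frac{1}{N^r}\prod_{s=1}^r \sum_{u_s, v_s} [p_s(\mcal{Z}_N)](u_s, v_s),
\]
so it suffices to prove
\[
  \lim_{N\to\infty}\frac{1}{N^r}\E\mathopen{}\Big[\prod_{s=1}^r \mbf{1}^T p_s(\mcal{Z}_N)\mbf{1}\Big]\mathclose{} = \prod_{s=1}^r \tau_{\mcal{Z}}(p_s).
\]

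By linearity I would reduce to monomial $p_s = x_{i_s(1)} \cdots x_{i_s(d_s)}$. Letting $T_s$ be the path test graph in \eqref{eq:path_test_graph} and setting $T = (V, E, \gamma) = \sqcup_s T_s$, the expectation above expands as
\[
  \sum_{\pi \in \mcal{P}(V)} \sum_{\phi: V^\pi \hookrightarrow [N]} \E\mathopen{}\Big[\prod_{e \in E} \mbf{\Xi}_N^{(\gamma(e))}(\phi(e))\Big]\mathclose{}.
\]
The crucial structural difference from Lemma \ref{lem:mixed_trace_matrix_units} is that replacing matrix units with $\mbf{K}_N$ removes the fixed labels at the endpoints $v_{s, 0}, v_{s, d_s}$ of each path; instead, each free endpoint contributes an extra factor of $N$ to the admissible-map count, which is exactly compensated by the $N^{-r}$ normalization.

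Running the spanning-tree/variance bookkeeping from the proof of Lemma \ref{lem:mixed_trace_matrix_units}, I would show that each connected component $T_{(n)}^\pi$ contributes at most $O\big(N \prod_{e \in E_{(n)}^\pi} \sqrt{\xi_N^{(\gamma(e))}}\big)$ admissible injective labellings, whence the total contribution from $\pi$ is $O(N^{c(\pi)})$ after the variance normalization $\prod_e (\xi_N^{(\gamma(e))})^{-1/2}$. Since $T$ has $r$ connected components, $c(\pi) \leq r$, and only partitions with $c(\pi) = r$---equivalently, $\pi = \sqcup_s \pi_s$ with $\pi_s \in \mcal{P}(V_s)$---survive dividing by $N^r$. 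The same argument further restricts to $\pi_s$ for which $T_s^{\pi_s}$ is a colored double tree (as invoked via \cite{Au18} in the proof of Lemma \ref{lem:mixed_trace_matrix_units}). Lemma \ref{lem:paths_double_trees} then forces $v_{s, 0} \sim_{\pi_s} v_{s, d_s}$, reducing each $T_s$ to the cycle test graph \eqref{eq:cycle_test_graph} with its double-tree partition sum computing $\tau_{\mcal{Z}}(p_s)$. The independence of matrix entries indexed by disjoint pairs (guaranteed by the injectivity of $\phi$ on $V^\pi$ together with the component-disjoint structure) then gives asymptotic factorization of the expectation across the $T_s$'s, and the desired limit follows.

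The main obstacle is verifying that every partition departing from this ideal structure---those merging distinct $T_s$'s, those whose quotient fails the colored double tree condition, or those that make a double tree without identifying the endpoints of some $T_s$---contributes only $o(1)$. Each such deviation produces an asymptotic factor of $o(1)$ via \eqref{eq:band_width_divergence} and the uniform moment bounds in \eqref{eq:generalized_wigner}, exactly as in the proof of Lemma \ref{lem:mixed_trace_matrix_units}; the accounting is entirely analogous, but must be redone with the path test graph in place of the rooted cycle, and with the additional $N^r$ in the denominator replacing the $N_0$ special vertices of the matrix-unit case.
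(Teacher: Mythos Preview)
Your proposal is correct and follows essentially the same approach as the paper: expand over partitions of the disjoint union of path test graphs, use the spanning-tree bound to show only partitions with $r$ connected components survive the $N^{-r}$ normalization, restrict to colored double trees via \cite{Au18}, invoke Lemma~\ref{lem:paths_double_trees} to close each path into the cycle test graph, and factorize. The only cosmetic difference is that you open with the explicit rank-one identity $\mbf{K}_N = \frac{1}{N}\mbf{1}\mbf{1}^T$, whereas the paper absorbs this directly into the redefinition of the admissible-map sets $A_{N,\pi}$.
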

\begin{proof}
We carry forward the notation from the proof of Lemma \ref{lem:mixed_trace_matrix_units}. In particular, restricting to monomials $p_s$, we write $T_s$ for the test graphs \eqref{eq:path_test_graph}; $T$ for their disjoint union; and $(T_{(n)}^\pi)_{n = 1}^u$ for the connected components of a quotient $T^\pi$. We redefine the set of admissible maps since we no longer have special vertices with fixed labels to account for, namely
\begin{align*}
  A_{N, \pi} &= \mleft\{\phi: V^\pi \hookrightarrow [N] \mathrel{}\mathclose{}\middle|\mathopen{}\mathrel{} |\phi(\source(e)) - \phi(\target(e))|_N \leq b_N^{(\gamma(e))}, \quad \forall e \in E\mright\};\\
  A_{N, \pi}^{(n)} &= \mleft\{\phi_n: V_{(n)}^\pi \hookrightarrow [N] \mathrel{}\mathclose{}\middle|\mathopen{}\mathrel{} |\phi_n(\source(e)) - \phi_n(\target(e))|_N \leq b_N^{(\gamma(e))},\quad \forall e \in E_{(n)}^\pi\mright\}.
\end{align*}
We still have the bounds \eqref{eq:submultiplicative_maps} and \eqref{eq:weak_bound_components}, which imply the following analogue of \eqref{eq:expectation_asymptotic}:
\begin{align}\label{eq:expectation_asymptotic_all_ones}
  \E\mathopen{}\Big[{\Tr}\mathopen{}\Big(\prod_{s = 1}^r \mbf{K}_Np_s(\mcal{Z}_N)\Big)\mathclose{}\Big]\mathclose{} &= \sum_{\pi \in \mcal{P}(V)} \frac{1}{N^{r-u}} \prod_{n = 1}^u O_d\mathopen{}\Bigg(\frac{\#(A_{N, \pi}^{(n)})}{N\prod_{e \in E_{(n)}^\pi} \sqrt{\xi_N^{(\gamma(e))}}}\Bigg)\mathclose{} \\
  &= \sum_{\pi \in \mcal{P}(V)} O_d(N^{u-r}).\notag
\end{align}
Thus, we can restrict to partitions $\pi \in \mcal{P}(V)$ such that $T^\pi$ has exactly $r$ connected components. Of course, since $T$ already has $r$ connected components, this means that we are simply considering the disjoint union of partitions $\pi_s \in \mcal{P}(V_s)$ for $s \in [r]$. As before, even though $\#(A_{N, \pi}) \leq \prod_{s = 1}^r \#(\mcal{B}_{N, \pi_s})$, the fact that
\[
  \lim_{N \to \infty} \frac{\#(A_{N, \pi})}{\prod_{s = 1}^r \#(\mcal{B}_{N, \pi_s})} = 1
\]
allows us to factor\small
\[
  \lim_{N \to \infty} \E\mathopen{}\Big[{\Tr}\mathopen{}\Big(\prod_{s = 1}^r \mbf{K}_Np_s(\mcal{Z}_N)\Big)\mathclose{}\Big]\mathclose{} = \lim_{N \to \infty} \prod_{s = 1}^r \frac{1}{N} \sum_{\pi_s \in \mcal{P}(V_s)} \sum_{\phi_s \in \mcal{B}_{N, \pi_s}} \E\mathopen{}\Big[\prod_{e \in E_s^{\pi_s}} \mbf{\Xi}_N^{(\gamma(e))}(\phi_s(e))\Big]\mathclose{}.
\]\normalsize
So, we will be done if we can prove that
\[
  \lim_{N \to \infty} \frac{1}{N} \sum_{\pi_s \in \mcal{P}(V_s)} \sum_{\phi_s \in \mcal{B}_{N, \pi_s}} \E\mathopen{}\Big[\prod_{e \in E_s^{\pi_s}} \mbf{\Xi}_N^{(\gamma(e))}(\phi_s(e))\Big]\mathclose{} = \tau_{\mcal{Z}}(p_s),
\]
but this follows from Lemma \ref{lem:paths_double_trees} and Example \ref{eg:traffic_probability} (recall that \cite{Au18} allows us to restrict to $\pi_s \in \mcal{P}(V_s)$ such that $T_s^{\pi_s}$ is a colored double tree).
\end{proof}

As in the case of the matrix units, assuming an infinitesimal distribution for $\mcal{Z}_N$, Lemma \ref{lem:mixed_trace_all_ones} proves that $\mcal{Z}_N$ and $\mbf{K}_N$ are asymptotically infinitesimally free. To complete the proof of Theorems \ref{thm:infinitesimal_freeness_wigner} and \ref{thm:infinitesimal_freeness_band}, we turn our attention to the non-unital algebra $\mcal{F}_N$ generated by $\mcal{E}_N$ and $\mbf{K}_N$.

\begin{lemma}\label{lem:algebra_all_ones_matrix_units}
The algebra $\mcal{F}_N$ is spanned by elements of the form
\begin{enumerate}[label=(\roman*)]
\item $\prod_{s =  1}^t (\mbf{E}_N^{(j_{s-1}, k_s)}\mbf{K}_N) = \frac{1}{N^{t-1}}\mbf{E}_N^{(j_0, j_0)}\mbf{K}_N$, where $t \geq 1$; \label{def:columns_monomial}
\item $\prod_{s =  1}^t (\mbf{K}_N\mbf{E}_N^{(j_{s-1}, k_s)}) = \frac{1}{N^{t-1}}\mbf{K}_N\mbf{E}_N^{(k_t, k_t)}$, where $t \geq 1$; \label{def:rows_monomial}
\item $\mathopen{}\big[\prod_{s =  1}^t (\mbf{E}_N^{(j_{s-1}, k_s)}\mbf{K}_N)\big]\mathclose{}\mbf{E}_N^{(j_t, k_{t+1})} = \frac{1}{N^t} \mbf{E}_N^{(j_0, k_{t+1})}$, where $t \geq 0$; \label{def:matrix_unit_monomial}
\item $\mathopen{}\big[\prod_{s =  1}^t (\mbf{K}_N\mbf{E}_N^{(j_{s-1}, k_s)})\big]\mathclose{}\mbf{K}_N= \frac{1}{N^t}\mbf{K}_N$, where $t \geq 0$. \label{def:all_ones_monomial}
\end{enumerate}
\begin{proof}
The result follows from a simple computation using the fact that $\mbf{K}_N$ is idempotent and the identity $\mbf{E}_N^{(j_0, k_1)}\mbf{E}_N^{(j_1, k_2)} = \indc{k_1 = j_1}\mbf{E}_N^{(j_0, k_2)}$.
\end{proof}
\end{lemma}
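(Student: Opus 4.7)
The statement is purely algebraic: every element of $\mcal{F}_N$ is a linear combination of words in the generators, and the four displayed expressions just enumerate the possible collapsed forms of such words. The strategy is therefore a short case analysis based on a handful of reduction identities, and I expect no substantive obstacle beyond careful bookkeeping of indices and powers of $\frac{1}{N}$.

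First, I would note that any word in the generators $\mcal{E}_N \cup \{\mbf{K}_N\}$ can be placed in strictly alternating form by repeatedly collapsing consecutive factors of the same type via the two baseline identities $\mbf{E}_N^{(a,b)}\mbf{E}_N^{(c,d)} = \indc{b=c}\mbf{E}_N^{(a,d)}$ and $\mbf{K}_N^2 = \mbf{K}_N$. Such an alternating word then falls into exactly one of the four families (i)--(iv), distinguished by whether it begins and ends with an $\mbf{E}$-letter or a $\mbf{K}_N$-letter.

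Next, I would record three absorption identities obtained by a direct entrywise computation:
\[
  \mbf{E}_N^{(a,b)}\mbf{K}_N = \mbf{E}_N^{(a,a)}\mbf{K}_N, \qquad \mbf{K}_N\mbf{E}_N^{(a,b)} = \mbf{K}_N\mbf{E}_N^{(b,b)}, \qquad \mbf{K}_N\mbf{E}_N^{(a,b)}\mbf{K}_N = \tfrac{1}{N}\mbf{K}_N.
\]
The first two reflect the fact that $(\mbf{E}_N^{(a,b)}\mbf{K}_N)(i,j) = \frac{1}{N}\delta_{i,a}$ does not depend on $b$, while the third follows because $(\mbf{K}_N\mbf{E}_N^{(a,b)}\mbf{K}_N)(i,j) = \frac{1}{N^2}$ is independent of $(i,j)$.

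Finally, I would close each of (i)--(iv) by a short induction on $t$. For (i), the base case $t=1$ is the first absorption identity; the inductive step writes the $(t+1)$-fold product as the $t$-fold product times $\mbf{E}_N^{(j_t, k_{t+1})}\mbf{K}_N$, applies the inductive hypothesis to the former, and then uses $\mbf{K}_N \mbf{E}_N^{(j_t, k_{t+1})} \mbf{K}_N = \frac{1}{N}\mbf{K}_N$ to pick up the extra factor of $\frac{1}{N}$. Cases (ii)--(iv) are handled by the same mechanism after the analogous grouping of the terminal or initial factors, the exponent of $\frac{1}{N}$ in each case simply counting the number of complete ``$\mbf{K}_N \mbf{E} \mbf{K}_N$'' collapses performed along the way.
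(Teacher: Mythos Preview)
Your proposal is correct and follows essentially the same approach as the paper: the paper's proof is a one-line remark that the result is a simple computation using the idempotence of $\mbf{K}_N$ and the matrix unit identity $\mbf{E}_N^{(j_0, k_1)}\mbf{E}_N^{(j_1, k_2)} = \indc{k_1 = j_1}\mbf{E}_N^{(j_0, k_2)}$, and your argument simply spells out that computation in detail via the absorption identities and the induction on $t$.
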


\begin{cor}\label{cor:infinitesimal_freeness_all_ones_matrix_units}
$\mcal{E}_N$ and $\mbf{K}_N$ are asymptotically infinitesimally free.
\end{cor}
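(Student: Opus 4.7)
The plan is to invoke Proposition \ref{prop:criteria_infinitesimal_freeness} with $\mcal{E} = \mcal{E}_N$ and $\mcal{Z} = \C[\mbf{K}_N]$. The containment $\mcal{E} \subset \ker(\varphi)$ holds asymptotically since $\varphi_N(\mbf{E}_N^{(j,k)}) = \frac{1}{N}\indc{j = k} \to 0$. Because $\mbf{K}_N^2 = \mbf{K}_N$, the subalgebra $\mcal{Z}$ collapses to $\C\mathbf{1} \oplus \C\mbf{K}_N$; since $\varphi_N(\mbf{K}_N) = \frac{1}{N} \to 0$, its centered part $\interior{\mcal{Z}}$ reduces in the limit to scalar multiples of $\mbf{K}_N$. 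Both criteria of the proposition therefore amount to showing that, for every $r \geq 1$ and every choice of matrix units $E_s = \mbf{E}_N^{(j_{s-1}, k_s)}$,
\[
  \lim_{N \to \infty} \varphi_N(E_1 \mbf{K}_N \cdots E_r \mbf{K}_N) = \lim_{N \to \infty} N\varphi_N(E_1 \mbf{K}_N \cdots E_r \mbf{K}_N) = 0.
\]

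The verification is immediate from Lemma \ref{lem:algebra_all_ones_matrix_units}(i), which collapses the alternating product into
\[
  \prod_{s = 1}^r \bigl(\mbf{E}_N^{(j_{s-1}, k_s)} \mbf{K}_N\bigr) = \frac{1}{N^{r-1}} \mbf{E}_N^{(j_0, j_0)} \mbf{K}_N,
\]
whose normalized trace is $\varphi_N = \frac{1}{N}\cdot \frac{1}{N^{r-1}}(\mbf{K}_N)_{j_0, j_0} = \frac{1}{N^{r+1}} = O(N^{-2})$ for $r \geq 1$. Hence both $\varphi_N$ and $N\varphi_N$ vanish in the limit, establishing the two conditions of Proposition \ref{prop:criteria_infinitesimal_freeness} and thus the corollary. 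By linearity, the same conclusion covers arbitrary linear combinations of matrix units in place of each $E_s$.

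There is no serious obstacle here: the argument is pure bookkeeping powered by the sandwich identity $\mbf{E}_N^{(j, k)} \mbf{K}_N \mbf{E}_N^{(j', k')} = \frac{1}{N}\mbf{E}_N^{(j, k')}$, which extracts an extra factor of $\frac{1}{N}$ at every interior occurrence of $\mbf{K}_N$; this additional power of $N$ is precisely what kills even the infinitesimal contribution. The only point requiring any care is recognizing that, because $\mbf{K}_N$ is idempotent, $\interior{\mcal{Z}}$ is (asymptotically) one-dimensional, so the class of alternating products to be checked collapses to the single form displayed above rather than arbitrary polynomials in $\mbf{K}_N$.
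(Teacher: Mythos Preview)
Your proof is correct and follows essentially the same approach as the paper: both use Lemma \ref{lem:algebra_all_ones_matrix_units}(i) to collapse the alternating product, observe that its trace is $O(N^{-r})$, and then invoke Proposition \ref{prop:criteria_infinitesimal_freeness}. The paper's version is terser, while you spell out why the idempotency of $\mbf{K}_N$ reduces the centered elements of $\C[\mbf{K}_N]$ to multiples of $\mbf{K}_N$ itself, but the substance is identical.
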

\begin{proof}
The characterization of $\mcal{F}_N$ in Lemma \ref{lem:algebra_all_ones_matrix_units} implies that
\[
  {\Tr}\mathopen{}\Big(\prod_{s =  1}^t (\mbf{E}_N^{(j_{s-1}, k_s)}\mbf{K}_N)\Big)\mathclose{} = O(N^{-t}).
\]
Once again, a straightforward application of Proposition \ref{prop:criteria_infinitesimal_freeness} proves the result.
\end{proof}

We adopt the notation $\mbf{E}_N^{(0, 0)} = \mbf{K}_N$ to characterize the type $B$ distribution of $\mcal{Z}_N \cup \mcal{F}_N$. The following result implies that the only non-trivial values of $(\mu_{\mcal{Z} \cup \mcal{F}}, \nu_{\mcal{Z} \cup \mcal{F}})$ have already been computed in Lemmas \ref{lem:mixed_trace_matrix_units} and \ref{lem:mixed_trace_all_ones}.

\begin{lemma}\label{lem:mixed_trace_matrix_units_and_all_ones}
For any NC monomials $q_1, \ldots, q_r \in \C\langle\mbf{y}\rangle$ and $p_1, \ldots, p_r \in \C\langle \mbf{x}\rangle$,
\begin{equation}\label{eq:mixed_trace_high_degree}
  \lim_{N \to \infty} {\Tr}\mathopen{}\Big(\prod_{s =  1}^r q_s(\mcal{F}_N)p_s(\mcal{Z}_N)\Big)\mathclose{} = 0 \quad \text{if} \quad \sum_{s = 1}^r \deg(q_s) > r.
\end{equation}
Otherwise, $\deg(q_s) = 1$ for each $s \in [r]$, in which case $q_s(\mcal{F}_N) \in \mcal{E}_N \cup \{\mbf{E}_N^{(0, 0)}\}$ and
\begin{equation}\label{eq:mixed_trace}
  \lim_{N \to \infty} {\Tr}\mathopen{}\Big(\prod_{s =  1}^r \mbf{E}_N^{(j_{s-1}, k_s)}p_s(\mcal{Z}_N)\Big)\mathclose{} = \prod_{s = 1}^r [\indc{k_s = j_s}\tau_{\mcal{Z}}(p_s)],
\end{equation}
where $j_0 = j_r$. In particular, since the index $0$ only comes in pairs $(j_{s-1}, k_s) = (0, 0)$, the limit vanishes if there exist $s, s' \in [r]$ such that $\mbf{E}_N^{(j_{s-1}, k_s)} = \mbf{E}_N^{(0, 0)}$ and $\mbf{E}_N^{(j_{s'-1}, k_{s'})} \in \mcal{E}_N$.
\end{lemma}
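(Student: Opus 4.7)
The plan is to extract an explicit $N$-dependent prefactor from each $q_s(\mcal{F}_N)$ via Lemma \ref{lem:algebra_all_ones_matrix_units}, then combine a rank-one operator-norm bound for \eqref{eq:mixed_trace_high_degree} with Lemmas \ref{lem:mixed_trace_matrix_units} and \ref{lem:mixed_trace_all_ones} for \eqref{eq:mixed_trace}. First I would apply Lemma \ref{lem:algebra_all_ones_matrix_units} to write $q_s(\mcal{F}_N) = N^{-(d_s - 1)}\tilde{q}_s$ in each of the four cases, where $d_s = \deg(q_s)$ and $\tilde{q}_s$ is a product of at most two elements drawn from $\mcal{E}_N \cup \{\mbf{K}_N\}$. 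Collecting prefactors yields
\[
\prod_{s=1}^r q_s(\mcal{F}_N) = N^{-(D - r)} \prod_{s=1}^r \tilde{q}_s, \qquad D = \sum_{s=1}^r d_s.
\]
Since matrix units and $\mbf{K}_N$ are each rank-one with operator norm at most one, every $\tilde{q}_s$ is rank-one with operator norm at most one, and hence so is $\prod_s \tilde{q}_s\, p_s(\mcal{Z}_N)$, with operator norm bounded by $\prod_s \|p_s(\mcal{Z}_N)\|$. The elementary inequality $|{\Tr}(A)| \leq \mathrm{rank}(A) \cdot \|A\|$ then yields the bound $N^{-(D-r)} \prod_s \|p_s(\mcal{Z}_N)\|$ on the absolute trace. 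Taking expectation and using uniform $L^p$ control on $\|\mbf{\Xi}_N\|$ (standard Gaussian concentration in the relevant band-width regimes, combined with H\"{o}lder) gives a finite bound on $\E[\prod_s \|p_s(\mcal{Z}_N)\|]$, whence the expected trace is $o(1)$ when $D > r$, proving \eqref{eq:mixed_trace_high_degree}.

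For \eqref{eq:mixed_trace}, when $D = r$ each $d_s = 1$, so every $q_s(\mcal{F}_N)$ is a single element of $\mcal{E}_N \cup \{\mbf{E}_N^{(0, 0)}\}$. The two pure cases are immediate: if every $q_s(\mcal{F}_N) \in \mcal{E}_N$, the formula is Lemma \ref{lem:mixed_trace_matrix_units}; if every $q_s(\mcal{F}_N) = \mbf{K}_N$, then $k_s = j_s = 0$ automatically and the formula reduces to Lemma \ref{lem:mixed_trace_all_ones}. For the mixed case, I would adapt the combinatorial expansion from the proof of Lemma \ref{lem:mixed_trace_matrix_units}, treating a $\mbf{K}_N$ insertion at position $s$ as a uniform-weight $N^{-1}$ contribution with the adjacent vertex labels free in $[N]$, whereas a matrix unit in $\mcal{E}_N$ at position $s$ pins the adjacent vertex labels to values in the bounded set $[N_0]$. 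The spanning tree argument then shows that any connected component of the quotient test graph containing both types of insertions must use at least one spanning-tree edge joining a free vertex (which would otherwise contribute a factor of $N$ to the label count) to a pinned vertex (contributing only $O(1)$), so the $N^{-1}$ prefactor from the $\mbf{K}_N$ is left uncompensated and the mixed contribution vanishes.

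The main obstacle will be the mixed case: one must carefully bookkeep the interplay between free and pinned vertex labels when they coexist in the same connected component of the quotient test graph, and verify that every contributing partition produces at least one unmatched factor of $N^{-1}$. A secondary concern is the uniform $L^p$ bound on $\|\mbf{\Xi}_N\|$ used in the operator-norm step, but this is standard for periodically banded GUE matrices in the regimes under consideration.
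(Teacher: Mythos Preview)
Your approach to \eqref{eq:mixed_trace_high_degree} via rank-one operator-norm bounds is genuinely different from the paper's, but it has two concrete problems.

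First, the factorization $q_s(\mcal{F}_N) = N^{-(d_s-1)}\tilde{q}_s$ is wrong. Reading off Lemma \ref{lem:algebra_all_ones_matrix_units}: in cases (iii)/(iv) one has $d_s = 2t+1$ and prefactor $N^{-t} = N^{-(d_s-1)/2}$, while in cases (i)/(ii) one has $d_s = 2t$ and prefactor only $N^{-(t-1)} = N^{-(d_s-2)/2}$. In particular, for $d_s = 2$ (a single product $\mbf{E}\mbf{K}_N$ or $\mbf{K}_N\mbf{E}$) the explicit prefactor is $1$, not $N^{-1}$. The argument is salvageable because $\|\mbf{E}\mbf{K}_N\| = \|\mbf{K}_N\mbf{E}\| = N^{-1/2}$, so combining prefactor and operator norm gives $\|q_s(\mcal{F}_N)\| = N^{-(d_s-1)/2}$ uniformly across all four types; the corrected bound on the trace is then $N^{-(D-r)/2}\,\E\prod_s \|p_s(\mcal{Z}_N)\|$, which still suffices. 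But as written, your claim that each $\tilde{q}_s$ has norm at most one discards exactly the decay you need in the even-degree cases.

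Second, and more seriously, the appeal to ``standard Gaussian concentration'' for $\E\prod_s \|p_s(\mcal{Z}_N)\|$ does not match the generality in which the lemma is stated: the section works with arbitrary Wigner entries satisfying only the moment bounds \eqref{eq:generalized_wigner} and arbitrary band widths $b_N \to \infty$. Uniform operator-norm control is not automatic here and is not established elsewhere in the paper. The paper's own proof avoids this entirely: it first proves \eqref{eq:mixed_trace} by the combinatorial spanning-tree argument (essentially your sketch for the mixed case, made precise by isolating a single connected component $T_{(n_*)}^\pi$ that simultaneously carries a $\frac{1}{N}$ from some $\mbf{K}_N$ and a pinned vertex from some matrix unit), and then deduces \eqref{eq:mixed_trace_high_degree} by a replacement scheme that compares the high-degree trace to a degree-$r$ trace already known to be $O_d(1)$, with the excess $N^{-t}$ factors from Lemma \ref{lem:algebra_all_ones_matrix_units} supplying the decay. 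This stays inside the moment method and requires nothing beyond \eqref{eq:generalized_wigner}.

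Your sketch for the mixed case of \eqref{eq:mixed_trace} is on the right track and close to what the paper does.
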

\begin{proof}
The proof of \eqref{eq:mixed_trace_high_degree} will follow from our analysis of \eqref{eq:mixed_trace}, which we prove first. By our earlier work, we need only to consider the case of $s, s' \in [r]$ such that $\mbf{E}_N^{(j_{s-1}, k_s)} = \mbf{E}_N^{(0, 0)}$ and $\mbf{E}_N^{(j_{s'-1}, k_{s'})} \in \mcal{E}_N$. Moreover, the cyclic invariance of the trace allows us to assume that this occurs precisely at the values
\begin{equation}\label{eq:cross_over}
\mbf{E}_N^{(j_0, k_1)} = \mbf{E}_N^{(0, 0)} \quad \text{and} \quad \mbf{E}_N^{(j_1, k_2)} \in \mcal{E}_N.
\end{equation}
We think of each occurrence of $\mbf{E}_N^{(j_{s-1}, k_s)} = \mbf{E}_N^{(0, 0)}$ as providing its $\frac{1}{N}$ normalization to the test graph $T_s$ associated to $p_s(\mcal{Z}_N)$. Similarly, each occurrence of a matrix unit $\mbf{E}_N^{(j_{s-1}, k_s)} \in \mcal{E}_N$ creates a special vertex in each of the test graphs $T_{s-1}$ and $T_s$.

To adapt our earlier work, we define
\begin{alignat*}{3}
  J &= \mleft\{s \in [r] \mid j_s \neq 0\mright\} = \{s_J^{(1)}, \ldots, s_J^{(r_1)}\} \quad &&\text{and} \quad J^c &&= \{s_{J^c}^{(1)}, \ldots, s_{J^c}^{(r_2)}\};\\
  K &= \mleft\{s \in [r] \mid k_s \neq 0\mright\} = \{s_K^{(1)}, \ldots, s_K^{(r_1)}\} \quad &&\text{and} \quad K^c &&= \{s_{K^c}^{(1)}, \ldots, s_{K^c}^{(r_2)}\},
\end{alignat*}
where $r_1 + r_2 = r$. Similarly, we redefine
\begin{align*}
  A_{N, \pi} &= \mleft\{\phi: V^\pi \hookrightarrow [N] \mathrel{}\mathclose{}\middle|\mathopen{}\mathrel{} \begin{aligned} \phi([v_{s, 0}]_{\sim_\pi}) = k_s, \quad &\forall s \in K\\ \phi([v_{s, d_s}]_{\sim_\pi}) = j_s, \quad &\forall s \in J\\|\phi(\source(e)) - \phi(\target(e))|_N \leq b_N^{(\gamma(e))}, \quad &\forall e \in E\end{aligned}\mright\};\\
  A_{N, \pi}^{(n)} &= \mleft\{\phi_n: V_{(n)}^\pi \hookrightarrow [N] \mathrel{}\mathclose{}\middle|\mathopen{}\mathrel{} \begin{aligned}\begin{aligned} \phi_n([v_{s, 0}]_{\sim_\pi}) &= k_s & &\text{if } [v_{s, 0}]_{\sim_\pi} \in V_{(n)}^\pi \text{ and } s \in K\\ \phi_n([v_{s, d_s}]_{\sim_\pi}) &= j_s & &\text{if } [v_{s, d_s}]_{\sim_\pi} \in V_{(n)}^\pi \text{ and } s \in J\end{aligned}\\|\phi_n(\source(e)) - \phi_n(\target(e))|_N \leq b_N^{(\gamma(e))},\quad \forall e \in E_{(n)}^\pi\end{aligned}\mright\}.
\end{align*}

Note that each connected component $T_{(n)}^\pi$ of $T^\pi$ satisfies at least one of the following conditions:
\begin{enumerate}[label=(\roman*)]
\item $T_{(n)}^\pi$ has at least one special vertex with a fixed label, in which case we can apply \eqref{eq:bound_components};
\item $T_{(n)}^\pi$ contains the edges of a test graph $T_s$ that has been assigned the normalization $\frac{1}{N}$ of its adjacent term $\mbf{E}_N^{(j_{s-1}, k_s)} = \mbf{E}_N^{(0, 0)}$, in which case we can apply \eqref{eq:weak_bound_components},
\end{enumerate}
where the number $u_2$ of connected components of type (ii) satisfies $u_2 \leq r_2$. In particular, the connected component $T_{(n_*)}^\pi$ that contains the edges of $T_1$ will satisfy both of these conditions. Indeed, this follows from \eqref{eq:cross_over}. Rearranging, we can count the connected components of type (ii) first, namely, $T_{(1)}^\pi, \ldots, T_{(u_2)}^\pi$ with $u_2 = n_*$. The analogue of \eqref{eq:expectation_asymptotic} and \eqref{eq:expectation_asymptotic_all_ones} in this case then follows:
\begin{gather*}
  {\Tr}\mathopen{}\Big(\prod_{s =  1}^r \mbf{E}_N^{(j_{s-1}, k_s)}p_s(\mcal{Z}_N)\Big)\mathclose{} = \sum_{\pi \in \mcal{P}(V)} \sum_{\phi \in A_{N, \pi}} \frac{1}{N^{r_2}}\prod_{n = 1}^u \E\mathopen{}\Big[\prod_{e \in E_{(n)}^\pi} \mbf{\Xi}_N^{(\gamma(e))}(\phi(e))\Big]\mathclose{} \\
  = \sum_{\pi \in \mcal{P}(V)} \frac{1}{N^{r_2-u_2}}\prod_{n = 1}^{u_2} O_d\mathopen{}\Bigg(\frac{\#(A_{N, \pi}^{(n)})}{N\prod_{e \in E_{(n)}^\pi} \sqrt{\xi_N^{(\gamma(e))}}}\Bigg)\mathclose{} \prod_{n = u_2 + 1}^u O_d\mathopen{}\Bigg(\frac{\#(A_{N, \pi}^{(n)})}{\prod_{e \in E_{(n)}^\pi} \sqrt{\xi_N^{(\gamma(e))}}}\Bigg)\mathclose{} \\
  = \sum_{\pi \in \mcal{P}(V)} \frac{1}{N^{r_2-u_2}}\mathopen{}\Big[\prod_{n = 1}^{u_2-1} O_d(1)\Big]\mathclose{}O_d(N^{-1})\mathopen{}\Big[\prod_{n = u_2 + 1}^u O_d(1)\Big]\mathclose{} = O_d(N^{-1}),
\end{gather*}
which proves \eqref{eq:mixed_trace}.

To prove \eqref{eq:mixed_trace_high_degree}, we use the characterization of a monomial $q_s(\mcal{F}_N)$ given in Lemma \ref{lem:algebra_all_ones_matrix_units}. In particular, we imagine replacing the terms in the trace
\[
  {\Tr}\mathopen{}\Big(\prod_{s =  1}^r q_s(\mcal{F}_N)p_s(\mcal{Z}_N)\Big)\mathclose{}
\]
according to the following scheme:
\begin{enumerate}[label=(\alph*)]
\item if $q_s(\mcal{F}_N)$ is of the form \ref{def:columns_monomial} or \ref{def:rows_monomial}, then we replace $q_s(\mcal{F}_N)$ with $\mbf{E}_N^{(0, 0)}$; \label{def:replace_double_normalization}
\item if $q_s(\mcal{F}_N)$ is of the form \ref{def:matrix_unit_monomial}, then we replace $q_s(\mcal{F}_N)$ with the corresponding matrix unit without the factor of $\frac{1}{N^t}$; \label{def:replace_matrix_unit_power}
\item if $q_s(\mcal{F}_N)$ is of the form \ref{def:all_ones_monomial}, then we replace $q_s(\mcal{F}_N)$ with $\mbf{E}_N^{(0, 0)}$ without the factor of $\frac{1}{N^t}$. \label{def:replace_all_ones_power}
\end{enumerate}
After this procedure, our work above shows that the resulting trace satisfies
\begin{equation}\label{eq:bounded_trace}
{\Tr}\mathopen{}\Big(\prod_{s =  1}^r \mbf{E}_N^{(j_{s-1}, k_s)}p_s(\mcal{Z}_N)\Big)\mathclose{} = O_d(1);
\end{equation}
however, based on our analysis of $\#(A_{N, \pi})$, the original trace then necessarily satisfies
\begin{equation}\label{eq:high_degree_asymptotic}
  {\Tr}\mathopen{}\Big(\prod_{s =  1}^r q_s(\mcal{F}_N)p_s(\mcal{Z}_N)\Big)\mathclose{} = o_d(1).
\end{equation}
Indeed, consider the following interpretation of the replacement scheme. If the original term is of type \ref{def:columns_monomial} (resp., type \ref{def:rows_monomial}), then it creates a special vertex in the test graph $T_{s-1}$ (\emph{resp.,} $T_s$) and contributes a factor of $\frac{1}{N^t}$ to the test graph $T_s$, where $t = \frac{\deg(q_s)}{2}$. In contrast, its replacement $\mbf{E}_N^{(0, 0)}$ only contributes a factor of $\frac{1}{N}$ to $T_s$. Similarly, if the original term is of type \ref{def:matrix_unit_monomial} or type \ref{def:all_ones_monomial}, then its replacement simply drops the factor of $\frac{1}{N^t}$, where $t = \frac{\deg(q_s) - 1}{2}$. In any case, since $\sum_{s=1}^r \deg(q_s) > r$, we know that a replacement of type \ref{def:replace_double_normalization}, \ref{def:replace_matrix_unit_power}, or \ref{def:replace_all_ones_power} occurs with $t \geq 1$. Our work in establishing \eqref{eq:bounded_trace} then proves \eqref{eq:high_degree_asymptotic}. The result now follows.
\end{proof}

\begin{cor}\label{cor:infinitesimally_free_band_matrices}
Assume that the family $\mcal{Z}_N$ has an infinitesimal distribution. Then the matrices $\mcal{Z}_N$, $\mcal{E}_N$, and $\mbf{K}_N$ are asymptotically infinitesimally free.
\end{cor}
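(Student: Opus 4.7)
The plan is to apply Proposition~\ref{prop:criteria_infinitesimal_freeness} to the algebra $\mcal{Z}$ generated by $\mcal{Z}_N$ and the non-unital algebra $\mcal{E} = \mcal{F}_N$ generated by $\mcal{E}_N \cup \{\mbf{K}_N\}$. The kernel hypothesis $\mcal{F}_N \subset \ker(\varphi)$ is immediate from the classification in Lemma~\ref{lem:algebra_all_ones_matrix_units}: every element of $\mcal{F}_N$ is a linear combination of terms of the form $\frac{1}{N^t}\mbf{E}_N^{(a,b)}$ or $\frac{1}{N^t}\mbf{K}_N$ (possibly with one further matrix-unit factor), each of whose normalized expected trace is $O(N^{-t-1})$ and hence tends to zero.

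To verify conditions (i) and (ii) of the proposition, I fix $\tau_{\mcal{Z}}$-centered elements $Z_s = P_s(\mcal{Z}_N)$ with $\tau_{\mcal{Z}}(P_s) = 0$ and arbitrary $F_s \in \mcal{F}_N$. By linearity, it suffices to take each $F_s = q_s(\mcal{F}_N)$ to be one of the monomials listed in Lemma~\ref{lem:algebra_all_ones_matrix_units}, at which point Lemma~\ref{lem:mixed_trace_matrix_units_and_all_ones} splits the analysis into two cases. If $\sum_s \deg(q_s) > r$, then the unnormalized trace is already $o(1)$ by~\eqref{eq:mixed_trace_high_degree}, so both $\frac{1}{N}\E[\Tr(\prod_s F_s Z_s)]$ and $\E[\Tr(\prod_s F_s Z_s)]$ tend to zero, verifying (i) and (ii). If instead $\deg(q_s) = 1$ for every $s$, then each $F_s$ is a single matrix unit $\mbf{E}_N^{(j_{s-1}, k_s)}$ under the convention $\mbf{E}_N^{(0, 0)} = \mbf{K}_N$, and~\eqref{eq:mixed_trace} yields
\[
\lim_{N\to\infty} \E\mathopen{}\Big[{\Tr}\mathopen{}\Big(\prod_{s=1}^r \mbf{E}_N^{(j_{s-1}, k_s)} P_s(\mcal{Z}_N)\Big)\mathclose{}\Big]\mathclose{} = \prod_{s=1}^r \indc{k_s = j_s}\,\tau_{\mcal{Z}}(P_s) = 0,
\]
since every factor $\tau_{\mcal{Z}}(P_s)$ vanishes by centering. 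This simultaneously gives condition (ii) and, after dividing by $N$, condition (i).

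Applying Proposition~\ref{prop:criteria_infinitesimal_freeness} thus yields the asymptotic infinitesimal freeness of $\mcal{Z}_N$ from $\mcal{F}_N$. Combined with Corollary~\ref{cor:infinitesimal_freeness_all_ones_matrix_units}, which supplies the internal infinitesimal freeness of $\mcal{E}_N$ from $\mbf{K}_N$ inside $\mcal{F}_N$, a standard associativity argument for infinitesimal free independence promotes these to the claimed joint infinitesimal freeness of $\mcal{Z}_N$, $\mcal{E}_N$, and $\mbf{K}_N$. The main obstacle here is combinatorial rather than conceptual, namely coordinating the graph-theoretic bookkeeping for traces that mix matrix units with the normalized all-ones matrix; since this was already addressed in Lemma~\ref{lem:mixed_trace_matrix_units_and_all_ones}, the present corollary reduces to the routine assembly above.
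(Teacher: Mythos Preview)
Your proof is correct and follows essentially the same route as the paper: you apply Proposition~\ref{prop:criteria_infinitesimal_freeness} to the pair $(\mcal{Z}_N, \mcal{F}_N)$ using Lemma~\ref{lem:mixed_trace_matrix_units_and_all_ones}, then invoke Corollary~\ref{cor:infinitesimal_freeness_all_ones_matrix_units} and associativity of infinitesimal freeness to upgrade to joint freeness of all three families. The only difference is that you spell out the kernel hypothesis and the case split from Lemma~\ref{lem:mixed_trace_matrix_units_and_all_ones} more explicitly than the paper does.
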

\begin{proof}
Under the assumption for $\mcal{Z}_N$, we already know that each pair of the families $\mcal{Z}_N$, $\mcal{E}_N$, and $\mbf{K}_N$ are asymptotically infinitesimally free by Lemmas \ref{lem:mixed_trace_matrix_units} and \ref{lem:mixed_trace_all_ones} and Corollary \ref{cor:infinitesimal_freeness_all_ones_matrix_units}. So, we will be done if we can prove that $\mcal{Z}_N$ and $\mcal{F}_N$ are asymptotically infinitesimally free. Again, this follows from applying the criteria in Proposition \ref{prop:criteria_infinitesimal_freeness} to Lemma \ref{lem:mixed_trace_matrix_units_and_all_ones}.
\end{proof}

This completes the proof of Theorems \ref{thm:infinitesimal_freeness_wigner} and \ref{thm:infinitesimal_freeness_band}. The type $B$ free convolution calculations in Corollaries \ref{cor:outliers_wigner} and \ref{cor:outliers_band} essentially already appear in \cite[\S 4.1.1]{Shl18}, so we do not repeat them. 

\addtocontents{toc}{\SkipTocEntry}
\subsection*{Acknowledgements}
The author thanks Paul Bourgade, James Mingo, and Dimitri Shlyakhtenko for helpful conversations. The figures in this article were produced in Inkscape.

\bibliographystyle{amsalpha}
\bibliography{band_infinitesimal}

\end{document}